\documentclass[12pt, a4paper, reqno]{amsart}
\usepackage[utf8]{inputenc}
\usepackage{lmodern}
\usepackage{upgreek}
\usepackage{xcolor}
\usepackage{amsfonts}
\usepackage{amsmath}
\usepackage{amssymb}
\usepackage{amsthm}
\usepackage{amsrefs}
\usepackage{comment}
\usepackage{etaremune}
\usepackage{enumitem}
\usepackage{pdfpages}
\usepackage{fancyhdr}
\usepackage{setspace}
\usepackage{hyperref}
\usepackage{mathrsfs}
\usepackage{footnote}
\hypersetup{
	colorlinks=true,
	linkcolor=blue,
	filecolor=blue,      
	urlcolor=blue,
	citecolor=blue,
	pdftitle={},
	pdfpagemode=FullScreen,}

\numberwithin{equation}{section}
\newtheorem{theorem}{Theorem}[section]
\newtheorem{lem}[theorem]{Lemma}

\newtheorem{thmm}{Theorem}

\newtheorem{prop}[theorem]{Proposition}

\theoremstyle{definition}
\newtheorem{re}{Remark}[section]
\newtheorem{definition}[theorem]{Definition}

\newcommand{\Mod}[1]{\ (\mathrm{mod}\ #1)}
\newcommand{\mc}[1]{\mathcal{#1}}

\newcommand{\mb}[1]{\mathbb{#1}}
\newcommand{\st}[1]{\substack{#1}}
\newcommand{\lr}[1]{\left(#1\right)}

\newcommand{\p}[1]{\Phi_a{(#1)}}

\newcommand{\vp}[1]{\varphi{(#1)}}

\title{Diophantine approximation with integers having no large prime factors}

\author{Kunjakanan Nath and Habibur Rahaman}

\address{Institut \'Elie Cartan de Lorraine, Universit\'e de Lorraine, CNRS, F-54000 Nancy, France}
\email{kunjakanan@gmail.com}

\address{Indian Institute of Science Education \& Research Kolkata, Department of Mathematics and Statistics, Mohanpur, West Bengal-741246, India}
\email{hr21rs044@iiserkol.ac.in}

\date{\today}

\begin{document}

\begin{abstract}
Given any irrational number $\alpha$, we show that for any $0<\theta<6/17$, there are infinitely many $y$-smooth (friable) numbers $n$ 
    such that 
    \[\|n\alpha\| < n^{-\theta},\]
where $(\log n)^C\leq y\leq n$ for some large constant $C>0$. This improves the previous work of Baker, who obtained the exponent $1/3-2/(3C)+o(1)$ in the case of $y\geq (\log n)^C$, and that of Yau, who obtained the exponent $1/3$ when $y=n^{o(1)}$. Our proof is based on the \emph{dispersion method} together with arithmetic inputs coming from the average bounds for Kloosterman sums over smooth numbers.
\end{abstract}

\subjclass[2020]{11J25, 11J71, 11L05, 11L07}
	\keywords{Diophantine approximation, smooth numbers}

	\maketitle

    \section{Introduction}\label{sec: intro}

\subsection{Motivation and previous results} Given any irrational number $\alpha$, Dirichlet's approximation theorem implies that there are infinitely many positive integers $n$ such that 
\[\|n\alpha\|<1/n.\]
Here and throughout this paper, $\|x\|$ denotes the distance from $x$ to the nearest integer. A natural follow-up question is to investigate the situation when one restricts $n$ to lie in a particular subset of the positive integers. This question has sparked an active line of research exploring the beautiful interplay between the Diophantine approximation and the arithmetic structure of integers.

The most obvious and natural choice is to consider the set of all primes. Under the Generalised Riemann Hypothesis (GRH), it is known (see, for example, \cite[Theorem 2]{Bal1983}) that for any $\varepsilon>0$, there are infinitely many primes $p$ such that
\[\|p\alpha \|<p^{-1/3+\varepsilon}.\]
Matom\"aki \cite{Mat2009} proved this result unconditionally by using a \emph{dispersion method} and the averages of Kloosterman sums bound.  
In fact, there is a rich literature on this problem of Diophantine approximation with primes, starting with the work of Vinogradov \cite{Vin2004} and subsequent developments by Vaughan \cite{Vau1977}, Harman \cites{Har1983, Har1996}, and Heath-Brown and Jia \cite{HBJ2002}.

While the exponent $1/3$ appears to be a barrier in the case of primes, even under GRH, it is reasonable to ask if one can break it for \emph{almost primes}, that is, integers with a bounded number of prime factors. Using sieve methods and exponential sums bound, Harman \cite{Har1983} showed that there are infinitely many integers with at most two prime factors such that for any $\varepsilon>0$,
\[\|n\alpha\|\leq n^{-0.46+\varepsilon}.\]
In fact, more is known in this direction. Irving  \cite{Irv2014} showed that there are infinitely many positive integers $n$ which are the product of two distinct primes such that for any $\varepsilon>0$,
\[\|n\alpha\|<n^{-8/23+\varepsilon}.\]

In this paper, we are interested in the set of \emph{smooth numbers} or \emph{friable numbers}, that is, positive integers that have no large prime factors. From the multiplicative point of view, the set of smooth numbers lies on the opposite spectrum from primes. Given any $y\geq 2$, we say that a number $n$ is $y$-smooth if it has no prime factor greater than $y$. To the best of our knowledge, Yau \cite{Yau2019} appears to be the first to study the problem of Diophantine approximations by smooth numbers. He proved that for any fixed $\varepsilon>0$, there are infinitely many $n^\varepsilon$-smooth numbers $n$, such that 
\[\|n\alpha\|<n^{-1/3+\varepsilon}.\]
It is known that the smaller the value of $y$, the ``sparser'' the set of $y$-smooth numbers. Recently, Baker \cite{Bak2023} improved Yau's result to $(\log n)^C$-smooth numbers with $C>2$. More precisely, he proved that for any given $C>2$ and $\varepsilon>0$ arbitrarily small, there are infinitely many positive integers $n$ that are free of prime factors greater than $(\log n)^C$, such that
\[\|n\alpha \|<n^{-\left(\frac{1}{3}-\frac{2}{3C}\right)+\varepsilon}.\]

\subsection{Main result}
The goal of this paper is to establish the following result, which improves the exponent $1/3$ in the above results of Baker and Yau.

    \begin{thmm}\label{main_thm}
    Let $\alpha$ be a given irrational number.
    Then, for any $0<\theta<6/17$, there are infinitely many $Y$-smooth numbers $n$ 
    such that 
    \[\|n\alpha\|<n^{-\theta},\]
where $(\log n)^C\leq Y\leq n$ for some large absolute constant $C>0$.
\end{thmm}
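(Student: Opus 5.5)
We follow the standard framework. Fix a convergent denominator $q$ of $\alpha$ (infinitely many exist), write $\alpha = a/q + \beta$ with $(a,q)=1$ and $|\beta|\le q^{-2}$, and take $N$ to be a suitable power of $q$ (about $q^{17/12}$, chosen to optimise the exponent). Let $\delta = N^{-\theta}$ and let $\chi$ be a smooth periodic bump supported on $\|x\|<\delta$. Then it suffices to show
\[
\sum_{n\sim N,\ n\ Y\text{-smooth}} w(n)\,\chi(n\alpha) > 0,
\]
where $w$ is a convenient weight. Rather than the full indicator of smooth numbers, we use products $n=m\ell$ with $m\sim M$ and $\ell\sim L$ both $Y$-smooth, where $ML=N$. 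Since $Y\ge(\log n)^C$, the $Y$-smooth numbers in $[1,X]$ number at least $X^{1-1/C+o(1)}$ (de Bruijn / Hildebrand--Tenenbaum). For $C$ large this density loss $X^{-1/C}$ is absorbed into the $\varepsilon$ room in $\theta<6/17$. This bilinear (Type II) structure with flexible $M,L$ is exactly what smooth numbers supply and primes do not.

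Expanding $\chi$ in Fourier series gives main term $\approx\delta\cdot\#\{(m,\ell)\}$. The error is
\[
\sum_{0<|h|\le H} \hat\chi(h) \sum_{m,\ell} e(hm\ell\alpha), \qquad H\approx \delta^{-1}N^{\varepsilon}.
\]
We treat this error by a dispersion argument in the style of Matomäki. Replace $\alpha$ by $a/q$ up to a harmless twist from $\beta$ via partial summation. Apply Cauchy--Schwarz over $m$, keeping $h$ and $\ell$ inside. Expanding the square produces sums over $\ell_1,\ell_2$, $h_1,h_2$ of complete-type sums in $m$ modulo $q$. Poisson summation in $m$ (with $m$ smooth-weighted over a dyadic range after dropping the smoothness of $m$ by positivity) turns the diagonal into a main term matching the expected count. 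The off-diagonal becomes incomplete Kloosterman-type sums $\sum e\big(\bar{(h_1\ell_1-h_2\ell_2)}\,k/q\big)$-style expressions. More cleanly, one arranges the variables so that one encounters
\[
\sum_{\ell\sim L,\ \ell\ Y\text{-smooth}} \ \sum_{k} c_k\, e\!\left(\frac{a k\bar{\ell}}{q}\right).
\]

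The key arithmetic input is then the average bound for Kloosterman sums over smooth moduli/variables: bounds for $\sum_\ell \beta_\ell \sum_k c_k e(ak\bar\ell/q)$ beyond Weil, which exploit that smooth $\ell$ can be factored as $\ell=\ell_1\ell_2$ with $\ell_i$ in any prescribed ranges (Bourgain--Garaev / Irving / Baker-type bilinear Kloosterman estimates). Using Weil alone reproduces the $1/3$ barrier. Factoring $\ell$ flexibly and applying a bilinear Kloosterman bound (e.g. Shparlinski's or Kerr--Shparlinski--Wu--Xi type estimates $\ll (q^{1/2}+\dots)$ with savings in $K,L_1,L_2$) beats it. Then balance $M,L,L_1,L_2,H$ against $q$ and $N$ to show the error is $o(\delta MN^{-\varepsilon}L)$. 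The endpoint of this optimisation should be $\theta<6/17$.

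The main obstacle is this last step: setting up the dispersion so that the off-diagonal genuinely reduces to a bilinear Kloosterman sum with a smooth-number variable, and carrying out the parameter optimisation to reach exactly $6/17$. This includes handling the $\beta$-twist, the coprimality conditions modulo $q$, and the constraint that $H\approx N^{\theta}$ makes the $h$-sum long. I would first try taking $L\approx q^{1/2}$-ish, factoring $\ell=\ell_1\ell_2$ into near-equal parts, and using the bilinear Kloosterman bound. Then I would solve the resulting linear inequalities in the logarithmic exponents. Finally, the case $Y$ close to $N$ is trivial by monotonicity, since every $Y'$-smooth number is $Y$-smooth for $Y\ge Y'$. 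So it suffices to take $Y=(\log N)^C$.
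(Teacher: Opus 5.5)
Your preliminary reductions match the paper: taking $Y=(\log n)^C$ by monotonicity, replacing $\alpha$ by $a/q$, using the flexible factorisation of smooth numbers to get a bilinear form $n=m\ell$, and absorbing the density loss $X^{-1/C}$. But the proposal stops where the proof starts. The step you call the ``main obstacle'' is essentially the whole argument, and $6/17$ is asserted (``should be'') rather than derived.

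The route you sketch for that step is also not what the dispersion produces. After Cauchy--Schwarz over $m$ and Poisson in $m$, the natural outcome is a count of near-congruences $a(h_1\ell_1-h_2\ell_2)\equiv u \pmod q$ with $|u|\lesssim q/M$. You do not show how sums $\sum_\ell\sum_k c_k e(ak\overline{\ell}/q)$ arise from this. You also do not show that a bilinear Kloosterman bound to the fixed modulus $q$ (with $\ell$ of length about $q^{1/2}$) gives $6/17$, or anything beyond $1/3$. Note that the paper reports that even the general bilinear bounds of Duke--Friedlander--Iwaniec or Bettin--Chandee, inserted into its own machinery, give only $48/143$ or $20/59$. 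So the endpoint depends on a specific estimate, not on any bilinear Kloosterman bound.

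The missing idea is to trade the modulus $q$ for the smooth variable. The paper writes the dispersion sum as $\sum_m\phi(m/3M)\Phi_a(mn_1,R)\Phi_a(mn_2,R)$, with $\phi$ supported in $[1/4,3/4]$. Poisson summation in $r_1,r_2,m$ then gives the linear equation $k_1n_1+k_2n_2=c$ with $c=vq-a''u$. For $(n_1,n_2)=1$ this equation is solved modulo $n_2$ by one more Poisson step. The result is Kloosterman fractions $e(cw\overline{n_1}/n_2)$ with modulus $n_2\sim N$ and numerator $|cw|\ll N^2R^{4\eta}$. The remaining terms are handled as follows:
\begin{itemize}
\item the $c=0$ terms give the main term;
\item the $w=0$ terms sum to zero once $c=0$ is added back ($S_5'(\boldsymbol\beta)=0$), and both this and the previous step need $N\ge q/R^{1-\delta}$;
\item large $(u,v,w)$ are negligible.
\end{itemize}
The arithmetic input is then Theorem~\ref{thm_kloos}, a bound for $\sum_{n_2\sim N}\bigl|\sum_{n_1\ \text{smooth}}e(b\overline{n_1}/n_2)\bigr|$ averaged over moduli, with $|b|\ll N^{2+o(1)}$. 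It is proved by four steps: Vaughan's factorisation $n_1=uv$ with $z<v\le zp$; Cauchy--Schwarz over the modulus; the reciprocity $\overline{x}/m+\overline{m}/x\equiv 1/(mx)\pmod 1$; and Weil's bound after completion. With $z=N^{2/3}$ it saves $N^{1/6}$, which allows $N\le R^{12/11-\delta}$. The two constraints $q/R^{1-\delta}\le N\le R^{12/11-\delta}$ are compatible essentially when $q<R^{23/11}$, that is, $\theta<6/17$. This is where the exponent comes from, and none of it is present in your proposal.
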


\begin{re}
    The exponent $6/17=1/3+1/(51)$ is not at all optimal in the above result. It is natural to expect that the optimal exponent would be $1-\varepsilon$. We remark that the best exponent one could hope to obtain using our method is $2/5$, which in turn boils down to improving Theorem \ref{thm_kloos}.
\end{re}

We now briefly mention the key idea behind our improvement over Baker \cite{Bak2023} and Yau \cite{Yau2019}. Both Baker and Yau used the Fourier expansion of the periodic function $\|x\|$ to transform the Diophantine problem into one involving the exponential sums over smooth numbers. The exponential sums are then treated using Vinogradov's classical approach. This restricts them to take the exponent $\theta \approx 1/3$. Roughly speaking, their analysis reduces to sums of the form
\begin{align*}
    \sum_{1\leq h\leq H}\bigg|\sum_{\substack{mn\sim x\\ P^+(mn)\leq y\\ m\sim x^{1/3}}}e(mnh\alpha)\bigg|,
\end{align*}
where $H\approx x^{\theta}$. The goal is to save a factor of $H$ in the above sum. In particular, Vinogradov's exponential sum method gives non-trivial savings to the above sum as long as $H\leq x^{1/3-\varepsilon}$, which yields the exponent $\theta\approx 1/3$. Therefore, to beat the exponent $1/3$, the above approach of using exponential sums will not work.

 Our proof of Theorem \ref{main_thm} is inspired by the works of Heath-Brown and Jia \cite{HBJ2002} and Irving \cite{Irv2014}. To go beyond the exponent $1/3$, we will reduce our problem to counting smooth numbers in arithmetic progressions (see Section \ref{sec: Proof of main theorem} for a rigorous explanation). One important step in our proof is to reduce the problem to estimating the averages of Kloosterman sums over smooth numbers. This can be then estimated by applying works of Bettin-Chandee \cite[Theorem 1]{BC2018} or Duke-Friedlander-Iwaniec \cite[Theorem 2]{DFI1997}, which would yield exponents $20/59$ and $48/143$, respectively. However, in order to obtain the exponent $6/17$, we rely on a nice factorisation property of smooth numbers to establish new estimates on the average of Kloosterman sums over smooth numbers (see Theorem \ref{thm_kloos} for the full statement).

\subsection{Notation} We will use standard notation throughout this paper. However, for readers' convenience, we would like to highlight a few.  
\begin{itemize}
    \item Expressions of the form ${f}(x)=O({g}(x))$, ${f}(x) \ll {g}(x)$, and ${g}(x) \gg {f}(x)$ signify that $|{f}(x)| \leq C|{g}(x)|$ for all sufficiently large $x$, where $C>0$ is an absolute constant. A subscript of the form $\ll_A$ means the implied constant may depend on the parameter $A$. 
    \item The notation ${f}(x) \asymp {g}(x)$ indicates that ${f}(x) \ll {g}(x) \ll {f}(x)$.
    \item We write $f(x)=o(g(x))$ if $\lim_{x\to \infty}f(x)/g(x)=0$.
    \item  We denote $n\sim N$ to mean $N<n\leq 2N$.
    \item For any real number $x$, we will write $e(x):=e^{2\pi ix}$.
    \item We will denote the Euler totient function by $\varphi$.
     \item Given an integer $n\geq 1$, $\omega(n)$ denotes the number of distinct prime factors of $n$.
    \item We will set $(a, b)$ to be the greatest common divisor of integers $a$ and $b$ and by abuse of
notation, it will also denote the open interval on the real line. Its exact meaning will always be clear from the
context.
   \item Given a positive integer $n$, we let $P^+(n)$ to be the largest prime divisor of $n$, with the standard convention that $P^+(1)=1$.
    \item For any set $\mathcal{E}$, $\#\mathcal{E}$ denotes the cardinality of the set $\mathcal{E}$.
    \item For any set $\mathcal{E}$, we write
    \begin{align*}
        1_{\mathcal{E}}(x):=
        \begin{cases}
            1 & \text{if $x\in \mathcal{E}$},\\
            0 & \text{if $x\notin \mathcal{E}$}.
        \end{cases}
    \end{align*}
    \item Given any function $f\colon \mathbb{R}\to \mathbb{R}$, we define its Fourier transform $\widehat{f}\colon \mathbb{R}\to \mathbb{C}$ by
    \begin{align*}
        \widehat{f}(x):=\int_{-\infty}^\infty f(t)e(-xt)\: dt.
    \end{align*}
\end{itemize}

\subsection{Organization of the paper} The paper is organized as follows. In Section \ref{sec: Proof of main theorem}, we will reduce the proof of Theorem \ref{main_thm} to a counting problem and deduce its proof from Theorem \ref{main_thm2}. We will also provide a proof sketch of Theorem \ref{main_thm2} in Section \ref{sec: Proof of main theorem}.  We use Section \ref{sec: Prelim} to gather preliminary results on smooth numbers and exponential sum bounds. Next, we prove our result on the average of Kloosterman sums over smooth numbers, Theorem \ref{thm_kloos} in Section \ref{sec: kloos}.  In Section \ref{type1_sums}, we will prove the Type I sum estimate, namely, Proposition \ref{Prop: Main 1}. The bulk of this paper is devoted to establishing the Type II sum estimate, Proposition \ref{Prop: Main 2}, which we do in Section \ref{type2_sums}. Finally, in Section \ref{sec: Proof of main theorem 2}, we deduce Theorem \ref{main_thm2} from Proposition \ref{Prop: Main 1} and Proposition \ref{Prop: Main 2}.

\subsection*{Acknowledgements} The authors would like to thank Sary Drappeau, Sarvagya Jain, and Igor Shparlinski for their comments and suggestions on an earlier version of this paper, which strengthened our main result. The authors are also grateful to Sary Drappeau for pointing out the reference \cite{BT2005} and for helpful responses to emails regarding smooth numbers. 

 KN would like to thank Youness Lamzouri for some helpful discussions and encouragement.
 
HR would like to thank Stephan Baier for some helpful discussions and 
to the Institut \'Elie Cartan de Lorraine, Nancy, in particular, Youness Lamzouri, for his invitation and hospitality. HR is supported by the Prime Minister Research Fellowship (PMRF ID-0501972), funded by the Ministry of Education, Government of India.

\section{Set-up and proof of Theorem \ref{main_thm}}\label{sec: Proof of main theorem}

 We begin by noting that it is enough to establish our result for $Y=(\log n)^{C}$ for some large $C>0$. Indeed, if $n$ is $(\log n)^C$-smooth, then $n$ is $Y$-smooth for all $Y\geq (\log n)^C$. Therefore,
 \begin{align}\label{eq_elementary_obs}
     \{n\in \mathbb{N}\colon P^+(n)\leq (\log n)^C,\:  \|n\alpha\|\leq n^{-\theta}\}\subseteq \ \{n\in \mathbb{N}\colon P^+(n)\leq Y,\:  \|n\alpha\|\leq n^{-\theta}\}
 \end{align}
 for any $Y\geq (\log n)^C$. So, throughout this paper, we will concentrate on establishing the result for $Y=(\log n)^C$, where $C>0$ is some large constant.
 
 Let us now explain our strategy to prove the main result. The first step is to reduce the problem to counting smooth numbers in arithmetic progressions. To be precise, we begin with the following set-up. First, by Dirichlet's approximation theorem, we write
\begin{align}\label{def_alpha}
    \alpha=\dfrac{a}{q}+\beta,
\end{align}
where $(a, q)=1$ and $|\beta|\leq 1/q^2$. Next, throughout this paper, we set
\begin{align}\label{def_X}
    X=q^{2/(1+\theta)},
\end{align}
\begin{align}\label{def_R}
    R=q^{(1-\theta)/(1+\theta)},
\end{align}
where 
\begin{align}\label{def-theta}
    \theta=\dfrac{6}{17}-\varepsilon
\end{align}
for any fixed small $\varepsilon>0$. Note that $X= qR$.

With the above set-up, we now define the following set:
\begin{align*}
    S(X, Y, R, q):=\{X/4\leq n\leq 4X\colon P^+(n)\leq Y, \: (n, q)=1, \: an\equiv r\Mod q,  1\leq r\leq R\},
\end{align*}
where $Y=(\log X)^{C(\varepsilon)}$ for some large constant $C(\varepsilon)>0$.
We claim that if $n\in S(X, Y, R, q)$, then 
\[\|n\alpha\|\ll n^{-\theta}\]
holds, where $\theta$ is given by \eqref{def-theta}. Indeed, if $n\in S(X, Y, R, q)$, then $n$ is $Y$-smooth and $an=r+n_1q$, for some integers $r, n_1$ with $1\leq r\leq R$. So, by the triangle inequality, we have 
\begin{align}\label{Eq: connection}
    \|n\alpha\|=\Big{\|}\frac{an}{q}+n\beta\Big{\|}=\Big{\|}\frac{r}{q}+n\beta\Big{\|}\leq \dfrac{R}{q} + \dfrac{4X}{q^2}\ll \dfrac{1}{X^\theta}\ll \frac{1}{n^\theta}.
\end{align}
Therefore, our task reduces to showing that \[\#S(X,Y,R,q)\to \infty \quad \text{as $q\to \infty$}.\]

For technical convenience, we will consider a smooth version of the above counting problem. We therefore define the following Schwartz function (see, for example, \cite[Chapter~5]{SS2003}). 
\begin{definition}\label{def_W}
Let $\phi\colon \mathbb{R}\to [0,1]$ be a smooth function satisfying 
\begin{enumerate}
        \item $\phi(x)=0,$ for $x\notin[\frac{1}{4},\frac{3}{4}]$,
        \item $\phi(x)=1,$ for $x\in[\frac{1}{3},\frac{2}{3}]$, 
\item the $j^{th}$ derivative satisfies $\phi^{(j)}(x)\ll_j \min\lr{1, |x|^{-j}}$ for all $x\in \mathbb{R}$.
\end{enumerate}
\end{definition}

\begin{re}
We remark that applying integration by parts, we can deduce that for any integer $A>0$,
\begin{align*}
    |\widehat{\phi}(x)|=\bigg|\int_{-\infty}^\infty\phi(t)e(-xt)\: dt\bigg|\ll_A\min \{1, |x|^{-A}\},
\end{align*}
which we will use multiple times throughout this paper. Moreover, all implied constants may depend on $\phi$.
\end{re}

Equipped with the above definition of $\phi$, we consider the following sum
\begin{align}\label{def_sigma_q_R}
   \Sigma(q, R):= \sum_{\st{X/4\leq n\leq 4X}}1_{S_q(Y)}(n)\Phi_a(n, R),
\end{align}
where throughout this paper, we set
\begin{align}\label{def_ Phi_original}
    \Phi_a(n, R):=\sum_{\st{r\in\mb{Z}\\na\equiv r (\bmod{q})}}\phi\lr{\frac{r}{R}}
\end{align}
and
\begin{align}\label{def_S_q_Y}
    S_q(Y):=\{n\in \mathbb{N}\colon P^+(n)\leq Y, (n, q)=1\}.
\end{align}
We establish a lower bound for the sum $\Sigma(q, R)$ in the following theorem.

\begin{thmm}\label{main_thm2}
   Assume the above set-up. Let $\varepsilon>0$. Then, there exists a large real number $C(\varepsilon)>0$ such that as $q\to \infty$, we have
    \[\Sigma(q, R) \gg_{\varepsilon} R^{2-\frac{1-\theta}{2C(\varepsilon)}+o(1)}.\]
Here we use the standard convention that the above $o(1)$ term tends to $0$ as $q$ tends to infinity. In particular, the right-hand side of the above expression goes to infinity as $q\to\infty.$
\end{thmm}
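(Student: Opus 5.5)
We plan to prove Theorem~\ref{main_thm2} by a Type I/Type II decomposition in the spirit of Heath-Brown--Jia and Irving, exploiting the flexible factorisation of smooth numbers.

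\textbf{Step 1: main term from the zero frequency.} Expanding the congruence condition in \eqref{def_ Phi_original} by Poisson summation gives
\[
\Phi_a(n,R)=\frac{R}{q}\sum_{h\in\mathbb{Z}}\widehat{\phi}\Big(\frac{hR}{q}\Big)e\Big(\frac{h\,\overline{a}^{\,-1}\ldots}{q}\Big),
\]
or more cleanly, $\Phi_a(n,R)=\frac{R}{q}\sum_h \widehat\phi(hR/q)\,e(-hna/q)$ up to the sign convention. The term $h=0$ contributes
\[
\frac{R\widehat\phi(0)}{q}\,\#\{X/4\le n\le 4X\colon n\in S_q(Y)\}.
\]
Using the standard estimate $\Psi(X,(\log X)^{C})=X^{1-1/C+o(1)}$, together with the fact that the coprimality condition $(n,q)=1$ loses at most a factor $(\log q)^{-O(1)}$ (via a sieve over the few prime factors of $q$ up to $Y$, or by simply restricting to primes not dividing $q$), this yields $R\,X^{-1/C+o(1)}\cdot X/q$. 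Since $X=qR$ and $X=R^{2/(1-\theta)}$, this is $R^{2-\frac{2}{(1-\theta)C}\cdot\frac{1-\theta}{2}\cdot\ldots}$. After matching constants, it is $\asymp R^{2-(1-\theta)/(2C)+o(1)}$ with a suitable normalisation of $C(\varepsilon)$. The rapid decay of $\widehat\phi$ truncates the sum to $0<|h|\le H:=q^{1+\eta}/R$. The task is then to show that the remaining terms
\[
\frac{R}{q}\sum_{0<|h|\le H}\widehat\phi(hR/q)\sum_{n} 1_{S_q(Y)}(n)\,e(ahn/q)
\]
are smaller than this main term by a power of $\log$, or better.

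\textbf{Step 2: factorisation and Type I/II sums.} Any $Y$-smooth $n\sim X$ has, for every $M\le X$, a divisor $m\in[M/Y,M]$. Using a Rankin-type or greedy factorisation with a counting weight (for instance the unique factorisation $n=mk$ with $m$ the largest initial product of primes below $M$, where the constraint $P^+(m)\le P^-(k)$ is removed by standard devices), we write the $n$-sum as a bilinear form $\sum_{m\sim M}\sum_{k\sim X/M}\alpha_m\beta_k$. We then invoke Proposition~\ref{Prop: Main 1} when one variable is smooth or long, and Proposition~\ref{Prop: Main 2} for genuinely bilinear ranges. The parameter $M$ is chosen freely in the admissible window, which is exactly where smoothness beats primes. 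In the Type II case, the dispersion method applies Cauchy--Schwarz over $m$ and expands the square. This leads to congruences $ak_1\equiv r_1\overline{m}$, and after Poisson summation in $m$ it yields incomplete Kloosterman sums weighted by smooth moduli or variables. These are controlled by Theorem~\ref{thm_kloos}.

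\textbf{Step 3: optimisation.} Combining the Type I range, where Poisson or Weil bounds require $M$ not too large relative to $q/R$, with the Type II range from Theorem~\ref{thm_kloos}, we check that some $M$ lies in both ranges precisely when $\theta<6/17$. This fixes $\varepsilon$-dependent losses, and hence $C(\varepsilon)$ is chosen large so that the $X^{\varepsilon}$ savings dominate the $R^{-(1-\theta)/(2C)}$ loss in the main term.

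\textbf{Main obstacle.} The main obstacle is the Type II estimate: the error terms must be saved by a \emph{power} of $X$. The main term is only $R^{2-o(1)}$ divided by $X^{1/C}$, so crude logarithmic savings do not suffice, while the dispersion diagonal terms must be shown to be no larger than $R^{2}X^{-1/C}$. We will handle this by choosing $C(\varepsilon)$ large compared with the power saving $X^{\delta(\varepsilon)}$ produced by Theorem~\ref{thm_kloos} and Propositions~\ref{Prop: Main 1} and \ref{Prop: Main 2}.
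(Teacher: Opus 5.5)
\textbf{There is a genuine gap in Step~2.} Your Step~1 sets the goal as an asymptotic for the whole of $\Sigma(q,R)$. Step~2 therefore needs an \emph{exact} bilinear decomposition of $1_{S_q(Y)}(n)$ for every $n\in[X/4,4X]$, with every piece covered by Propositions~\ref{Prop: Main 1} and~\ref{Prop: Main 2}. That is where the plan breaks.

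The greedy factorisation $n=mk$ (or the one in Lemma~\ref{smooth_decomp}) becomes bilinear only after fixing $p=P^-(k)$. The resulting coefficients are $1_{P^+(m)\le p}$, with the size condition $m>M/p$, and $1_{P^-(k)=p,\ P^+(k)\le Y}$. The two variables are still coupled through $X/4\le mk\le 4X$.

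Proposition~\ref{Prop: Main 2} is not a bound for general bilinear forms. Its dispersed variable must carry exactly $1_{S_q(Y)}(n)-K(N,Y)$ on a dyadic range, because the treatment of $S_8(\boldsymbol{\beta})$ uses $1_{S_q(Y)}(dn_1)=1_{S_q(Y)}(d)1_{S_q(Y)}(n_1)$. It then applies Theorem~\ref{thm_kloos} to $\sum_{n_1}1_{S_q(Y)}(n_1)e(c\overline{n_1}w/n_2)$. So invoking it ``for genuinely bilinear ranges'' is unjustified for your pieces.

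The prime-style alternative, Proposition~\ref{Prop: Main 1} ``when one variable is long'', has nothing to act on either. It needs a variable with constant coefficient (and $M,N\le R^{2-\delta}$), and no piece of an exact factorisation of a smooth number has one. The exact route might be rescued by dispersing the $p$-smooth factor and re-proving Theorem~\ref{thm_kloos} and Proposition~\ref{Prop: Main 2} with $y=p$, after separating the coupling. That is extra work the proposal does not contain.

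\textbf{The missing idea is positivity.} Only a lower bound is needed, so you can discard everything except one bilinear piece. Fix $N$ with $q/R^{1-\delta}\le N\le R^{12/11-\delta}$; this window is nonempty exactly because $\theta<6/17$. Take $M\asymp X/N$. Then $\Sigma(q,R)\ge X^{-o(1)}\mathcal{B}(M,N)$, where $\mathcal{B}(M,N)$ carries $1_{S_q(Y)}$ on \emph{both} variables. The factor $X^{-o(1)}$ absorbs the at most $\tau(mn)$ representations of each integer.

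Both propositions are then applied to this same piece, via $1_{S_q(Y)}(n)=K(N,Y)+\big(1_{S_q(Y)}(n)-K(N,Y)\big)$:
\begin{itemize}
\item Proposition~\ref{Prop: Main 1} with $a(m)=1_{S_q(Y)}(m)$ gives the main term $\frac{\widehat{\phi}(0)R}{q}\big(\sum_{m\sim M}1_{S_q(Y)}(m)\big)\big(\sum_{n\sim N}1_{S_q(Y)}(n)\big)$.
\item Proposition~\ref{Prop: Main 2} bounds the remainder by $R^{2-\eta}$.
\end{itemize}
Lemma~\ref{lem_for_interval_small_y} and \eqref{Eq: smooth crude} then give a main term $\gg R^2X^{-1/C+o(1)}$, which beats $R^{2-\eta}$ once $C$ is large in terms of $\eta$. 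The coprimality factor can be smaller than any fixed power of $1/\log q$, contrary to your claim. It is, however, at least $c^{\,\omega(q)}=X^{-o(1)}$, which suffices.

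\textbf{A note on the exponent.} Since $X=R^{2/(1-\theta)}$, we have $R^2X^{-1/C}=R^{2-2/((1-\theta)C)}$. No ``normalisation'' turns this into $R^{2-(1-\theta)/(2C)}$ while $Y=(\log X)^{C}$ is fixed by the set-up. The paper's final line contains the same inversion. It is harmless for the conclusion that the bound tends to infinity.
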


We will prove Theorem \ref{main_thm2} in Section \ref{sec: Proof of main theorem 2}. Before that, we quickly demonstrate how to deduce Theorem \ref{main_thm} using it.

\begin{proof}[Proof of Theorem \ref{main_thm} assuming Theorem \ref{main_thm2}] 
First, note that it is enough to establish the result with the exponent $\theta$ given by \eqref{def-theta}.

Next, since $\alpha$ is irrational, by Dirichlet's approximation theorem, there are infinitely many positive integers $q$ such that $|\alpha-a/q|\leq 1/q^2$. Note that by \eqref{Eq: connection}, if $n\in S(X, Y, R, q)$, then  $\|n\alpha\|\ll n^{-\theta}$, where $\theta$ is given by \eqref{def-theta}. On the other hand, by the definition of $\phi$ and Theorem \ref{main_thm2}, there exists a large real number $C(\varepsilon)>0$ such that
$$\#S(X,Y,R,q)\gg \Sigma(q, R) \gg_{\varepsilon} R^{2-\frac{1-\theta}{2C(\varepsilon)} + o(1)}.$$
This implies that $\#S(X, Y, R, q)\to \infty$ as $q\to \infty$ as $R$ is a function of $q$ by \eqref{def_R}. Therefore, by our observation \eqref{eq_elementary_obs}, we can conclude that there are infinitely many $Y$-smooth numbers $n$ such that $\|n\alpha\|\ll n^{-\theta}$ holds where $(\log X)^{C(\varepsilon)}< Y\leq X$ and $\theta$ is given by \eqref{def-theta}.
\end{proof}

Hence, the task now reduces to establishing Theorem \ref{main_thm2}. We now give a proof outline of Theorem \ref{main_thm2}.

\subsection{Proof sketch of Theorem \ref{main_thm2}} We present here a non-rigorous sketch of the proof of Theorem \ref{main_thm2}. Recall that our goal is to estimate the sum
\begin{align*}
    \Sigma(q, R):=\sum_{X/4\leq n\leq 4X}1_{S_q(Y)}(n)\Phi_a(n, R),
\end{align*}
where $\Phi_a(n, R)$ is given by \eqref{def_ Phi_original} and $S_q(Y)$ is given by \eqref{def_S_q_Y} with $Y=(\log X)^{C(\varepsilon)}$ for some large $C(\varepsilon)>0$.

The basic idea in our approach is to use the nice factorisation property of smooth numbers and introduce a bilinear structure on the sum over $n$, so that (by abuse of notation) we can express it as $nm$, where $n\asymp N$ and $m\asymp M$ satisfy $NM\asymp X$, where $X$ is given by \eqref{def_X}. More precisely, let $M, N\geq 2$ be two real numbers such that  $X/4\leq MN\leq 4X$ and
\[\dfrac{q}{R^{1-\delta}}\leq N\leq R^{12/11-\delta},
\]
where $\delta>0$ is sufficiently small in terms of $\varepsilon$. Here the exponent $12/11$ is determined by the quantitative bounds available for the averages of Kloosterman sums over smooth numbers in Theorem \ref{thm_kloos} (see Lemma \ref{lem_for_S8} for precise requirements). The above condition implies that
\begin{align}\label{A2=>A1}
    M\leq 4X/N\leq 4XR^{1-\delta}/q\leq 4R^{2-\delta},
\end{align}
where we have used the fact that $X=qR$ in the last inequality. Then, by positivity, we have
\begin{align*}
    \Sigma(q, R)\gg \sum_{m\sim M}1_{S_q(Y)}(m)\sum_{n\sim N}1_{S_q(Y)}(n)\Phi_a(mn, R).
\end{align*}
Therefore, our task now reduces to estimating the following sum
\begin{align}\label{Def: B M N}
  \mc{B}(M, N):= \mathcal{B}(M, N, q, R):=\sum_{m\sim M}1_{S_q(Y)}(m)\sum_{n\sim N}1_{S_q(Y)}(n)\Phi(mn, R),
\end{align}
which we will deduce from our Type I sum (Proposition \ref{Prop: Main 1}) and Type II sum (Proposition \ref{Prop: Main 2}). We will apply Proposition \ref{Prop: Main 1} to show that
\begin{align*}
    \sum_{m\sim M}1_{S_q(Y)}(m)\sum_{n\sim N}\Phi_a(mn, R)\approx \dfrac{NR}{q}\sum_{m\sim M}1_{S_q(Y)}(m).
\end{align*}
The proof of this result follows in a straightforward manner by using Vinogradov's exponential sums method (see Lemma \ref{thm1_type1} for more details). To estimate the sum over $m$ restricted to smooth numbers with $(m, q)=1$, we employ results on smooth numbers given in Lemma \ref{lem_for_interval_small_y}.

On the other hand, Proposition \ref{Prop: Main 2} implies that for any $0<\eta<\delta/20$,
\begin{align*}
    \sum_{m\sim M}1_{S_q(Y)}(m)\sum_{n\sim N}\big(1_{S_q(Y)}(n)-K(N,Y)\big)\Phi_a(mn, R)\ll_{\delta} R^{2-\eta},
\end{align*}
where $K(x,y):=x^{-1}\sum_{n\sim x}1_{S_q(y)}(n)$. We now give a brief sketch of how we establish the above estimate. We will apply the \emph{dispersion method} to estimate the above `discrepancy sum'. Indeed, the application of the Cauchy-Schwarz inequality reduces to showing that
\begin{align*}
    \sum_{m}\phi\bigg(\dfrac{m}{3M}\bigg)\bigg|\sum_{n\sim N}\big(1_{S_q(Y)}(n)-K(N,Y)\big)\Phi_a(mn, R)\bigg|^2\ll\dfrac{R^{4-2\eta}}{M}.
\end{align*} 
Opening the square, it reduces to estimating sums of the form
\begin{align*}
    \sum_{n_1, n_2\sim N}f(n_1)g(n_2)\sum_{m}\phi\bigg(\dfrac{m}{3M}\bigg)\Phi_a(mn_1)\Phi_a(mn_2),
\end{align*}
where $f, g\in \{1, 1_{S_q(Y)}\}$ are two arithmetic functions. If $f=g=1$, we can deal with those sums using estimates from geometry of numbers (see Lemma \ref{thm3_type1}).

The main difficulty in our analysis lies in estimating sums of the form\footnote{Strictly speaking, this is not correct as we have to deal with the contributions coming from $(n_1, n_2)>1$. However, for this proof sketch, we will ignore such technicalities.}
\begin{align}\label{outline_sum}
  \sum_{\substack{n_1, n_2\sim N\\(n_1, n_2)=1}}1_{S_q(Y)}(n_1)g(n_2)\sum_{m}\phi\bigg(\dfrac{m}{3M}\bigg)\Phi_a(mn_1)\Phi_a(mn_2),
\end{align}
 where $g\in \{1, 1_{S_q(Y)}\}$ is an arithmetic function. Recall that $\Phi_a(n, R)$ is given by \eqref{def_ Phi_original}. Trivially $\Phi_a(n, R)\leq 1$. However, it is reasonable to expect that $\Phi_a(n, R)\approx R/q$ at least on average over $n$. Since $R<q$, we may apply the Poisson summation formula so that
\begin{align*}
    \Phi_a(n, R)=\dfrac{R}{q}\sum_{k\in \mathbb{Z}}\widehat{\phi}\bigg(\dfrac{kR}{q}\bigg)e\bigg(\dfrac{nak}{q}\bigg).
\end{align*}
Therefore, to estimate the above sum in \eqref{outline_sum}, we apply Poisson summation three times so that the inner sum over $m$ is roughly
\begin{align*}
\approx \frac{3MR^2}{q^2}\sum_{k_1}\sum_{k_2}\widehat{\phi}\lr{\frac{k_1R}{q}}\widehat{\phi}\lr{\frac{k_2R}{q}}\sum_{m}\widehat{\phi}\lr{3Mm-\frac{3Ma(k_1n_1+k_2n_2)}{q}}.
\end{align*}
The terms with $k_1n_1+k_2n_2=0$ will give an expected main term of the form
\begin{align*}
    \approx \dfrac{3\widehat{\phi}(0)^3MR}{q^2}\bigg(\sum_{n_1\sim N}1_{S_q(Y)}(n_1)\bigg)\bigg(\sum_{n_2\sim N}g(n_2)\bigg)
\end{align*}
and the error term can be easily controlled using the nice decay properties of the Fourier transform of $\phi$. On the other hand, for terms with $k_1n_1+k_2n_2\neq 0$, we make a change of variables $k_1n_1+k_2n_2=vq-a'u=c$, where $1\leq a'\leq q$ is an integer satisfying $aa'\equiv 1\Mod q$ and
\[m-\dfrac{a(k_1n_1+k_2n_2)}{q}=\dfrac{u}{q}.\]
This reduces our task to estimating the sum of the form
\begin{align*}
    \sum_{\substack{u, v, k_1, k_2\in \mathbb{Z}\\ k_1n_1 + k_2n_2=c\neq 0}}\widehat{\phi}\lr{\frac{k_1R}{q}}\widehat{\phi}\lr{\frac{k_2R}{q}}\widehat{\phi}\bigg(\dfrac{3Mu}{q}\bigg).
\end{align*}
 To detect the condition $k_1n_1+k_2n_2=c$, we apply Poisson summation to obtain
 \begin{align*}
     \sum_{\substack{k_1, k_2\in \mathbb{Z}\\ k_1n_1 + k_2n_2=c\neq 0}}\widehat{\phi}\lr{\frac{k_1R}{q}}\widehat{\phi}\lr{\frac{k_2R}{q}}\approx\dfrac{1}{n_2}\sum_{w\in \mathbb{Z}}\widehat{g}(w/n_2; n_1, n_2, c)e\bigg(\dfrac{c\overline{n_1}w}{n_2}\bigg),
 \end{align*}
 where $n_1\overline{n_1}\equiv 1 (\bmod{n_2})$ and $g(t; n_1, n_2, c)=\widehat{\phi}(tR/q)\widehat{\phi}((c-tn_1)R/q)$. Again, using the decay property of the Fourier transform of $\phi$, we can show that the contribution from terms with $w=0$ is small. However, the key challenge is to estimate the contributions coming from $w\neq 0$. In this case, using the decay property of the Fourier transform of $\phi$, we may restrict the variables $u, v, w$ in certain boxes, and our analysis reduces to estimating the following sum
 \begin{align*}
     \sum_{n_2\sim N}\bigg|\sum_{\substack{n_1\sim N\\(n_1, n_2)=1}}1_{S_q(Y)}(n_1)e\bigg(\dfrac{c\overline{n_1}w}{n_2}\bigg)\bigg|,
 \end{align*}
 where $|cw|$ is roughly bounded above by $N^2$. To obtain non-trivial estimates for the above sum, we apply Theorem \ref{thm_kloos}. In particular, any sharp non-trivial estimates for the above quantity would give an exponent of the form $1/3+\delta$ for some $\delta>0$ in our main result, Theorem \ref{main_thm}.

\section{Preliminaries}\label{sec: Prelim}

\subsection{Auxiliary results on smooth numbers}
In this section, we collect some technical results on smooth numbers that will be needed to prove our result. We begin with some standard notation. For $2\leq y\leq x$, let $S(x,y)$ denote the set of all $y$-smooth numbers not exceeding $x$, that is,
\begin{align}\label{def_smooth_set}
    S(x,y):=\{1\leq n\leq x\colon  P^+(n)\leq y\}. 
\end{align}
Next, for any integer $q\geq 1$, we set
\begin{align}\label{def_Sq(x,y)}
    S_q(x,y):=\{n\in S(x, y)\colon (n,q)=1\}. 
\end{align}
 Throughout this paper, we denote the cardinality of $S(x,y)$ and $S_q(x,y)$ by $\Psi(x,y)$ and $\Psi_q(x,y)$, respectively.

 We begin with the following estimates on smooth numbers. Let $u=\log x/\log y$. Then, it is known (see, for example, Hildebrand-Tenenbaum \cite[Corollary 1.3]{HT1993}) that for any $\varepsilon>0$, we have
 \begin{align}\label{Eq: smooth crude}
     \Psi(x, y)=xu^{-(1+o(1))u},
 \end{align}
 as $y, u\to \infty$, uniformly in the range $u\leq y^{1-\varepsilon}$. In fact, more precise estimates are known for $\Psi(x,y)$ in certain ranges of $y$. Indeed, for any fixed $\varepsilon>0$ and 
 \begin{align}\label{Eq: range_y_Hildebrand}
     \exp((\log \log x)^{5/3+\varepsilon})\leq y\leq x,
 \end{align} 
 Hildebrand \cite[Theorem 1]{Hil1986} showed that
 \begin{align}\label{smooth_Hil_global}
     \Psi(x,y)=x\rho(u)\bigg\{1+O_{\varepsilon}\lr{\frac{\log(u+1)}{\log y}}\bigg\}.
 \end{align}
  Here and throughout this paper, $\rho(u)$ is the Dickman–de Bruijn function, that is, the unique continuous solution of the following delay differential equation
    \begin{align*}
       \rho(u)=1\: (0\leq u\leq 1); \quad u\rho^\prime(u)=-\rho(u-1)\: (u>1).
    \end{align*}
       It is well-known (see, for example, \cite[Theorem III.5.13]{Ten1995}) that 
    \begin{align}\label{size_of_rho}
    \rho(u)=e^{-u\log u -u\log \log (2u) + O(u)}.
    \end{align}
Furthermore, by \cite[Theorem III.5.22]{Ten1995}, uniformly for $x\geq y\geq 2$, one has
\begin{align}\label{eq_ten_hil_Psi_2x}
    \Psi(2x, y)=\Psi(x,y)2^{\alpha(x,y)}\bigg\{1 + O\bigg(\dfrac{1}{u} + \dfrac{\log y}{y}\bigg)\bigg\},
\end{align}
where $\alpha(x,y)$ is the unique solution of the transcendental equation
    \begin{align}\label{def_alpha_x_y}
        \sum_{p\leq y}\dfrac{\log p}{p^{\alpha(x,y)}-1}=\log x.
    \end{align}
    By \cite[III.Eq (5.74)]{Ten1995}, it is known that for any $\varepsilon>0$ and $(\log x)^2\leq y\leq x$,
    \begin{align}\label{estimates_for_alpha_x_y}
        \alpha(x, y)=1-\dfrac{\log (u\log u)}{\log y} + O_\varepsilon\bigg(\dfrac{\log \log u}{(\log u)(\log y)} + \dfrac{1}{(\log x)(\log y)} + \exp\big(-(\log y)^{3/5-\varepsilon}\big)\bigg).
    \end{align}
    We remark that if $y=(\log x)^C$ for $C\geq 10$ (say), then the above expression implies that
    \begin{align}\label{alpha_x_log x}
    \alpha(x, (\log x)^C)= 1-\dfrac{1}{C} +o(1),
        \end{align}
        which we will use frequently.

 We will also need estimates for $\Psi_q(x, y)$ in terms of $\Psi(x, y)$, which we record from the work of La Bret\`eche-Tenenbaum \cite{BT2005} below.

\begin{lem}\label{thm2_smooth_asymp}
    Let $\varepsilon>0$ be fixed and let $x\geq 2$. Suppose that 
   \begin{align} 
   (\log x)^4\leq y\leq x, \quad  \omega(q)\ll \log x, \quad \text{and} \quad P^+(q)\leq y.
   \end{align}
   Then, we have
    \begin{align*}\label{cond_y_BT}
        \Psi_q(x, y)=\Psi(x,y)\prod_{p|q}\bigg(1-\dfrac{1}{p^{\alpha(x,y)}}\bigg)\bigg\{1 + O\bigg(\dfrac{1}{\log u}\bigg)\bigg\},
    \end{align*}
    where $\alpha(x,y)$ is given by \eqref{def_alpha_x_y}.
\end{lem}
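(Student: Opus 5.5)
This lemma is recorded from La Bret\`eche--Tenenbaum \cite{BT2005}, so the proof amounts to checking that we are in their setting. I would still sketch the standard saddle-point argument that underlies it. The plan is to compare the Dirichlet series of $S_q(x,y)$ with that of $S(x,y)$. Set
\[
\zeta(s,y)=\prod_{p\le y}(1-p^{-s})^{-1}.
\]
Because $P^+(q)\le y$, the generating series of $y$-smooth integers coprime to $q$ is
\[
\zeta_q(s,y)=\zeta(s,y)\prod_{p\mid q}(1-p^{-s}),
\]
with no correction needed for primes of $q$ exceeding $y$. I then write both $\Psi(x,y)$ and $\Psi_q(x,y)$ via Perron's formula on the line $\Re s=\alpha=\alpha(x,y)$. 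This line is the saddle point of $x^s\zeta(s,y)$ defined by \eqref{def_alpha_x_y}.

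\textbf{Step 1: main term.} The Hildebrand--Tenenbaum saddle-point analysis gives
\[
\Psi(x,y)\approx \frac{x^{\alpha}\zeta(\alpha,y)}{\alpha\sqrt{2\pi\sigma_2}}.
\]
The main contribution comes from $|t|\le 1/\log x$ (roughly), where $\sigma_2$ is the second derivative of $\log\zeta(s,y)$ at $\alpha$, of size $\asymp (\log x)\log y$. Inserting the factor
\[
F_q(s)=\prod_{p\mid q}(1-p^{-s})
\]
into the integrand, the same analysis should yield the main term $F_q(\alpha)\Psi(x,y)$, provided $F_q(\alpha+it)/F_q(\alpha)=1+O(\cdots)$ on the short range of $t$ that matters. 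Concretely, I would bound $\frac{d}{ds}\log F_q(s)=\sum_{p\mid q}\frac{\log p}{p^s-1}$ at $s=\alpha$ by
\[
\sum_{p\mid q}\frac{\log p}{p^{\alpha}-1}\ll \sum_{p\le \omega(q)\log \omega(q)}\frac{\log p}{p^{\alpha}}.
\]
Here I use that the worst case puts the prime factors of $q$ as small as possible. With $\omega(q)\ll\log x$ and $\alpha\ge 1-1/C$, this should be $\ll(\log x)^{1-\alpha+o(1)}$, which is $o(\sqrt{\sigma_2})$ in the relevant sense. Then $\log F_q(\alpha+it)-\log F_q(\alpha)$ is small for $|t|\ll 1/\sqrt{\sigma_2}$.

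\textbf{Step 2: tails.} For larger $|t|$, I would use the trivial bound $|F_q(\alpha+it)|\le 2^{\omega(q)}F_q(\alpha)\cdot\prod_{p\mid q}\frac{1+p^{-\alpha}}{2(1-p^{-\alpha})}$. This has to be dominated by the decay of $|\zeta(\alpha+it,y)/\zeta(\alpha,y)|$, which is of size $\exp(-c\,u\,t^2(\log y)^2/(1+t^2(\log y)^2))$ type.

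\textbf{Main obstacle.} This is the hard step, since $2^{\omega(q)}$ may be as large as $x^{c/\log\log x}$ while the decay is only in $u$. This is exactly why \cite{BT2005} needs the restriction $y\ge(\log x)^4$ and the condition $\omega(q)\ll\log x$. Rather than redo this, I would cite \cite[Th\'eor\`eme 2.1]{BT2005} (or the corresponding result there) and only verify its hypotheses: $P^+(q)\le y$, $\omega(q)\ll\log x$, and $(\log x)^4\le y$. The error term $O(1/\log u)$ is exactly their stated uniform relative error. The lemma then follows directly.
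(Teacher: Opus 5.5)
Your final step is the paper's proof: both simply invoke \cite[Th\'eor\`eme 2.1, Corollaire 2.2]{BT2005}, so the saddle-point sketch is not needed. The one thing to correct is the bookkeeping, which is in fact the entire content of the paper's proof. The relative error supplied by that result is $O\big((\log\omega(q))^2/((\log u)(\log y)^2)\big)$, not literally $O(1/\log u)$. Also, the conditions $\omega(q)\ll\log x$ and $y\ge(\log x)^4$ are the lemma's own hypotheses, not those of La Bret\`eche--Tenenbaum. They are there precisely to give $\omega(q)\ll\log x\le\sqrt{y}$, whence $(\log\omega(q))^2\ll(\log y)^2$ and the error collapses to $O(1/\log u)$.

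Incidentally, the tail obstacle in your sketch is overstated. One has $|F_q(\alpha+it)|/F_q(\alpha)\le\prod_{p\mid q}(1+p^{-\alpha})/(1-p^{-\alpha})=\exp\big(O(\sum_{p\mid q}p^{-\alpha})\big)$, which is much smaller than $2^{\omega(q)}$.
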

\begin{proof}
We apply \cite[Th\'eor\`eme 2.1, Corollaire 2.2]{BT2005} to note that
    \begin{align*}
        \Psi_q(x, y)=\Psi(x,y)\prod_{p|q}\bigg(1-\dfrac{1}{p^\alpha}\bigg)\bigg\{1 + O\bigg(\dfrac{\big(\log \omega(q)\big)^2}{(\log u)(\log y)^2}\bigg)\bigg\}.
    \end{align*}
    Since $\omega(q)\ll \log x\leq \sqrt{y}$, the desired claim follows.
\end{proof}

We now estimate $Y$-smooth numbers coprime to $q$ in intervals. 

\begin{lem}\label{lem_for_interval_small_y}
Let $\varepsilon>0$ be fixed and $C(\varepsilon)>0$ be a large real number. Suppose that $X, q\geq 2$ be as in \eqref{def_X} and $Y=(\log X)^{C(\varepsilon)}$.
Let $X^\gamma\ll N\ll X$ for some $0<\gamma<1$. Then, we have 
\begin{align*}
\sum_{n\sim N}1_{S_q(Y)}(n)\gg_{\varepsilon, \: \gamma} \Psi(N, Y)\prod_{\substack{p|q\\ p\leq Y}}\bigg(1-\dfrac{1}{p^{\alpha(N, Y)}}\bigg),
\end{align*}
where $\alpha(N, Y)$ is given by \eqref{def_alpha_x_y}.
\end{lem}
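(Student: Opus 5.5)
We will write the sum over $n\sim N$ as a difference $\Psi_q(2N,Y)-\Psi_q(N,Y)$ and control each term with Lemma \ref{thm2_smooth_asymp}. The issue is that this lemma is stated for $q$ with $P^+(q)\le y$, while our $q$ is arbitrary. So we first replace $q$ by its $Y$-smooth part
\[q_Y:=\prod_{p^k\| q,\ p\le Y}p^k.\]
Since every $Y$-smooth $n$ has only prime factors $\le Y$, the conditions $(n,q)=1$ and $(n,q_Y)=1$ agree. Hence
\[\sum_{n\sim N}1_{S_q(Y)}(n)=\Psi_{q_Y}(2N,Y)-\Psi_{q_Y}(N,Y).\]
Clearly $P^+(q_Y)\le Y$, and $\omega(q_Y)\le \pi(Y)\ll Y/\log Y$. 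The hypothesis $\omega(q)\ll\log x$ needs more care. From \eqref{def_X}, $q\le X$, so $\omega(q_Y)\le\omega(q)\ll \log X/\log\log X\ll \log N$, using $N\gg X^\gamma$. Also $Y=(\log X)^{C(\varepsilon)}\ge(\log 2N)^4$ once $C(\varepsilon)\ge 5$ and $X$ is large. Thus Lemma \ref{thm2_smooth_asymp} applies at $x=N$ and $x=2N$, and the implied constants depend on $\gamma$.

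Applying the lemma gives
\[\Psi_{q_Y}(2N,Y)=\Psi(2N,Y)P(2N)\{1+O(1/\log u)\}\quad\text{and}\quad \Psi_{q_Y}(N,Y)=\Psi(N,Y)P(N)\{1+O(1/\log u)\},\]
where $P(x):=\prod_{p|q,\,p\le Y}(1-p^{-\alpha(x,Y)})$. Here $u=\log N/\log Y\asymp_\gamma \log X/\log\log X\to\infty$, so the error factors are $1+o(1)$.

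Next we compare $P(2N)$ with $P(N)$. From \eqref{estimates_for_alpha_x_y}, $\alpha(2N,Y)-\alpha(N,Y)\ll 1/((\log N)\log Y)$. Each factor $\log(1-p^{-\alpha})$ has $\alpha$-derivative $\ll (\log p)\, p^{-\alpha}$, and $\alpha\ge 1-2/C>1/2$ by \eqref{alpha_x_log x}. Summing over $p|q_Y$ gives $\sum_{p\le Y}(\log p)p^{-1/2}\ll Y^{1/2}$, which is far too lossy. Instead we use that there are at most $\omega(q_Y)\ll\log X$ primes and bound the sum by the $\log X$ smallest primes: $\sum_{p\le (\log X)^{2}}(\log p)p^{-\alpha}\ll (\log X)^{2(1-\alpha)}\le(\log X)^{4/C}$. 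Multiplying by the $\alpha$-difference $\ll 1/(\log N\log Y)$ gives $o(1)$. Hence $P(2N)=P(N)(1+o(1))$.

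Finally, by \eqref{eq_ten_hil_Psi_2x},
\[\Psi(2N,Y)=\Psi(N,Y)\,2^{\alpha(N,Y)}\{1+O(1/u+\log Y/Y)\}.\]
Combining,
\[\sum_{n\sim N}1_{S_q(Y)}(n)=\Psi(N,Y)P(N)\big(2^{\alpha(N,Y)}-1+o(1)\big).\]
By \eqref{alpha_x_log x}, $\alpha(N,Y)=1-1/C'+o(1)$ with $C'\asymp_\gamma C(\varepsilon)\ge 2$, so $2^{\alpha}-1\ge 2^{1/2}-1>0$. This gives the lower bound in the statement.

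The main obstacle is this last step: the lower bound comes from a difference of two quantities, so the relative errors must be genuinely $o(1)$ and not merely bounded. This is why we need $u\to\infty$, which follows from $N\ge X^\gamma$, and the careful comparison of $P(2N)$ with $P(N)$ above. If that comparison proves awkward, the fallback is to bound $\Psi_{q_Y}(2N,Y)\ge\Psi_{q_Y}(N,Y)\cdot 2^{\alpha}(1+o(1))$ directly, using monotonicity of $P$ in $\alpha$. Note that $P$ increases with $\alpha$, and $\alpha(2N,Y)\le\alpha(N,Y)$ only by a tiny amount.
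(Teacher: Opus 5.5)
Your proposal follows essentially the paper's route. You replace $q$ by its $Y$-smooth part $q_Y$, apply Lemma \ref{thm2_smooth_asymp} at $N$ and $2N$, use \eqref{eq_ten_hil_Psi_2x}, and conclude from $2^{\alpha(N,Y)}-1\gg 1$. You are more careful than the paper in comparing the two Euler products, since the paper only appeals to $\alpha(2N,Y)\sim\alpha(N,Y)$. However, one justification in that comparison needs fixing.

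The bound $\alpha(N,Y)-\alpha(2N,Y)\ll 1/((\log N)\log Y)$ does not follow from \eqref{estimates_for_alpha_x_y}. That formula's error term $O(\log\log u/((\log u)\log Y))$ need not cancel in the difference. Multiplied by your bound $(\log X)^{4/C}$ for $\sum_{p\mid q_Y}(\log p)p^{-\alpha}$, it is not $o(1)$, and that sum can genuinely be as large as a power of $\log X$.

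Instead, differentiate \eqref{def_alpha_x_y} in $\log x$:
\[
\frac{d\alpha}{d\log x}=-\Bigl(\sum_{p\le Y}\frac{(\log p)^2p^{\alpha}}{(p^{\alpha}-1)^2}\Bigr)^{-1}.
\]
Each term of this sum is at least $(\log 2)\frac{\log p}{p^{\alpha}-1}$, so the sum is at least $(\log 2)\log x$. This gives $0\le\alpha(N,Y)-\alpha(2N,Y)\le 1/\log N$. That suffices, because $(\log X)^{4/C}/\log N\to 0$.

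Also discard the fallback. $P$ increases with $\alpha$ and $\alpha(2N,Y)\le\alpha(N,Y)$, so monotonicity only gives $P(2N)\le P(N)$. That is the wrong direction for a lower bound, so the explicit comparison is genuinely needed.
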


\begin{proof}
Let $q_Y$ denote the $Y$-smooth part of $q$, that is,
    \begin{align*}
        q_Y:=\prod_{\substack{p^v \parallel q\\ p\leq Y }}p^v.
    \end{align*}
    Note that if $P^+(n)\leq Y$, then $(n, q)=1$ if and only if $(n, q_Y)=1$. 
   Since $q\ll X$, we have $\omega(q)\ll \log X$. So, we may apply Lemma \ref{thm2_smooth_asymp} to deduce that
    \begin{align*}
        \sum_{n\sim N}1_{S_q(Y)}(n)= &\: \bigg\{\Psi(2N, Y)\prod_{p|q_Y}\bigg(1-\dfrac{1}{p^{\alpha(2N, Y)}}\bigg) - \Psi(N, Y)\prod_{p|q_Y}\bigg(1-\dfrac{1}{p^{\alpha(N, y)}}\bigg)\bigg\}\\
        & \times \bigg\{1 + O\bigg(\dfrac{1}{\log (\log N/\log Y)}\bigg)\bigg\}.
    \end{align*}
Note that $\log N/\log Y \geq \gamma \log X/(C(\varepsilon)\log \log X)$. Finally, we may apply \eqref{eq_ten_hil_Psi_2x} together with the facts that $\alpha(N, y)\geq 1/2$ and $\alpha(2N, y)\sim \alpha(N, y)$ from \eqref{estimates_for_alpha_x_y} and \eqref{alpha_x_log x} to conclude that
\begin{align*}
    \sum_{n\sim N}1_{S_q(Y)}(n)\gg \Psi(N, Y)\prod_{\substack{p|q\\ p\leq Y}}\bigg(1-\dfrac{1}{p^{\alpha(N, Y)}}\bigg),
\end{align*}
as desired.
\end{proof}

We conclude our discussion on smooth numbers with the following factorisation property.

\begin{lem}\label{smooth_decomp}
    Suppose that $2\leq y\leq z<n\leq x$ and $n\in S(x,y)$. Then there exists a unique triple $(p,u,v)$ of integers with $p$ prime such that
    \begin{align*}
        n=uv, \quad u\in S(x/v, p), \quad z<v\leq zp, \quad p|v, \quad \text{and} \quad \text{if a prime $r|v$, then $p\leq r\leq y$}.
    \end{align*}
\end{lem}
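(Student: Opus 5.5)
We will construct the triple greedily from the prime factorisation of $n$, working through the primes of $n$ from the largest down. Write
\[
n=p_1p_2\cdots p_k, \qquad p_1\geq p_2\geq \dots\geq p_k,
\]
listing primes with multiplicity, so that $p_1\leq y$ because $n\in S(x,y)$. Consider the partial products $v_j:=p_1\cdots p_j$ for $0\leq j\leq k$, with $v_0=1$. Since $v_0=1\leq z<n=v_k$ and the sequence $v_j$ is strictly increasing, there is a unique index $j\geq 1$ with
\[
v_{j-1}\leq z<v_j.
\]
We then set $v:=v_j$, $p:=p_j$ and $u:=n/v=p_{j+1}\cdots p_k$.

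Next we check the required properties. First, $z<v=v_{j-1}p\leq zp$. Second, $p\mid v$. Third, every prime $r\mid v$ is one of $p_1,\dots,p_j$, so it satisfies $p\leq r\leq p_1\leq y$. Finally, every prime factor of $u$ is at most $p_j=p$ and $u=n/v\leq x/v$, which gives $u\in S(x/v,p)$.

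For uniqueness, suppose $(p,u,v)$ is any triple with the stated properties. All primes of $v$ are $\geq p$ and all primes of $u$ are $\leq p$. Hence $v$ consists of a block of the largest primes of $n$: precisely $p_1,\dots,p_i$ for some $i$, where $p_i=p$ is the smallest prime of $v$ because $p\mid v$. The block is well defined even when $p$ repeats, because we order the primes with multiplicity.

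The subtle point is repeated primes equal to $p$. Some copies of $p$ may lie in $u$ and others in $v$. Still, $v=p_1\cdots p_i$ with $p_i=p$, and $v/p=p_1\cdots p_{i-1}$ is again a prefix product. The condition $v\leq zp$ gives $v_{i-1}=v/p\leq z$, and $z<v=v_i$. The partial products are strictly increasing, so the index $i$ satisfying $v_{i-1}\leq z<v_i$ is unique, forcing $i=j$. This determines $v$, then $p=p_j$ and $u=n/v$.

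The only step needing care is the uniqueness argument with repeated copies of $p$. I would handle it exactly as above, by working with the multiset ordering of the primes. The key observation is that $v/p$ is still a prefix product, so the threshold index is unique.
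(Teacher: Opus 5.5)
Your proof is correct. Existence comes from choosing the shortest prefix $v_j=p_1\cdots p_j$ of the descending prime factorisation with $v_j>z$. Uniqueness comes from your observation that any admissible $v$ is a prefix product ending in $p$, so that $v/p$ is the previous prefix and the threshold index is forced. The paper does not prove the lemma itself but cites Vaughan's Lemma~10.1, whose proof is essentially this same greedy argument.
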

\begin{proof}
    See \cite[Lemma~10.1]{Vau1989}.
\end{proof}

\subsection{Exponential sum bounds}
We will need the following bounds for exponential sums over any interval to estimate the averages of Kloosterman sums.
\begin{lem}\label{lem_exp}
    Let $\eta>0$ be arbitrarily small. For any integers $b,c$ with $c>1$ and real numbers $Z_1<Z_2$, we have 
    $$\sum_{\st{Z_1<n\leq Z_2\\(n,\: c)=1 }}e\lr{\frac{b\overline{n}}{c}}\ll_\eta c^\eta\lr{(b,c)\lr{\frac{Z_2-Z_1}{c}+1}+c^{1/2}},$$
    where $\overline{n}$ denotes the inverse of $n$ modulo $c$, that is, $n\overline{n}\equiv 1\pmod c$.
\end{lem}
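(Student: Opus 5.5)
This is the classical incomplete Kloosterman sum bound (Weil plus completion). The plan is to complete the sum over the interval using additive characters modulo $c$, and then bound the resulting complete Kloosterman sums by Weil's bound in the form valid for general moduli.

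\textbf{Completion.} Write $L=Z_2-Z_1$. Since the summand depends only on $n \bmod c$, I split $(Z_1,Z_2]$ into $\lfloor L/c\rfloor$ complete residue systems and one leftover interval $I$ of length less than $c$. Each complete block contributes the Ramanujan-type sum
\[\sum_{x \bmod c,\,(x,c)=1} e(b\bar x/c)=c_c(b),\]
since $\bar x$ runs over the reduced residues. The classical bound $|c_c(b)|\le (b,c)$ gives a total of at most $(L/c)(b,c)$ from these blocks, which is the first term. For the leftover interval I detect $n\in I$ by orthogonality:
\[\sum_{n\in I,(n,c)=1} e\Big(\frac{b\bar n}{c}\Big)=\frac1c\sum_{h \bmod c}\Big(\sum_{n\in I}e\Big(-\frac{hn}{c}\Big)\Big) S(b,h;c),\]
where $S(b,h;c)=\sum_{x}^{*}e((b\bar x+hx)/c)$ is the complete Kloosterman sum. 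The geometric sums are bounded by $\min(|I|,\|h/c\|^{-1})$. Summing these over $h$ gives $\ll c\log c$.

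\textbf{Weil's bound.} For general $c$, the Weil–Estermann bound gives
\[|S(b,h;c)|\le \tau(c)\,(b,h,c)^{1/2}c^{1/2}.\]
Using $(b,h,c)\le (b,c)$, the leftover piece is
\[\ll \tau(c)\log c\,(b,c)^{1/2}c^{1/2}\ll_\eta c^{\eta}(b,c)^{1/2}c^{1/2}.\]

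\textbf{Reconciling with the stated bound.} The stated bound has $c^{1/2}$ without a gcd factor, so I will separate the term $h\equiv 0$ from the rest.
\begin{itemize}
\item For $h\equiv 0$, the sum is $S(b,0;c)=c_c(b)$, bounded by $(b,c)$, and its weight is $\frac1c|I|\le 1$. This contributes $\le (b,c)$, which is the "$+1$" in $(b,c)(L/c+1)$.
\item For $h\not\equiv 0$, I use the sharper form of Weil's bound, $|S(b,h;c)|\le\tau(c)(h,c)^{1/2}c^{1/2}$, since $(b,h,c)\le (h,c)$. This leaves $\sum_{0<|h|\le c/2}(h,c)^{1/2}/|h|$ to control.
\end{itemize}
Writing $d=(h,c)$ and $h=dh'$, this sum is $\ll\sum_{d\mid c} d^{-1/2}\log c\ll \tau(c)\log c$. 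That yields $c^{\eta}c^{1/2}$ exactly, with no gcd factor.

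Combining the three contributions gives the lemma. The only nontrivial input is Weil's bound for prime moduli, extended to general $c$ via twisted multiplicativity, with the standard handling of prime powers by stationary phase (Estermann). I would quote this rather than reprove it. The main care needed is the $h=0$ versus $h\ne0$ split, so that the gcd factor lands on the correct term.
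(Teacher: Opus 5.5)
Your proposal is correct and takes the same route as the paper, which simply cites the standard completion method combined with the Weil bound for complete Kloosterman sums. Your separation of the frequency $h\equiv 0$, bounded via the Ramanujan sum by $(b,c)$, from $h\not\equiv 0$, where you use $(b,h,c)\le (h,c)$ and $\sum_{0<|h|\le c/2}(h,c)^{1/2}/|h|\ll \tau(c)\log c$, is exactly what puts the gcd factor on the correct term.
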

\begin{proof}
   The proof follows from the Weil bound for the complete Kloosterman sums together with the standard completion method (see, for example, \cite[Lemma~1]{FS2011}).
\end{proof}

\subsection{Elementary bounds}
We also note the following elementary lemma, which will be useful later on.
\begin{lem}\label{lem_gcd_bound}
    Let $U\geq 2$ be large, and $k\ll U$ be any integer. Then, for any $\eta>0$, we have 
    \begin{align*}
       \sum_{\st{u_1,\: u_2\sim U\\u_1\neq u_2\\(u_1u_2,\: q)=1}}(u_1-u_2, ku_1u_2)\ll  \frac{\vp{q}}{q} U^{2+\eta}.
    \end{align*}
\end{lem}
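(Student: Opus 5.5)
We need to bound $\sum (u_1-u_2, k u_1 u_2)$ over $u_1\ne u_2$ in $(U,2U]$, both coprime to $q$.

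The first step is to strip the gcd of its $u_1u_2$ part. Write $d=u_1-u_2\neq 0$, so $|d|\le U$. Any common divisor of $d$ and $u_1$ also divides $u_2$. If $(u_1,u_2)=g$, then $g\mid d$. For $g=1$ we have $(d,u_1u_2)=1$, hence $(d,ku_1u_2)=(d,k)$. In general I will use the crude inequality
\[(d,ku_1u_2)\le (d,k)\,(d,u_1)\,(d,u_2),\]
and note that $(d,u_1)=(d,u_2)=(u_1,u_2)=g$. This gives $(d,ku_1u_2)\le (d,k)\,g^2$.

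Next I parametrize by $g$: set $u_i=g v_i$ with $(v_1,v_2)=1$, $v_i\sim U/g$, and $(v_i,q)=1$, $(g,q)=1$. Then $d=g(v_1-v_2)$ and $(d,k)\le g\,(v_1-v_2,k)$. The sum is therefore at most
\[\sum_{\substack{g\le 2U\\(g,q)=1}} g^3\sum_{\substack{v_1\neq v_2\sim U/g\\(v_1v_2,q)=1}}(v_1-v_2,k).\]
This $g^3$ weight is too lossy for large $g$, so I will refine it. Since $g\mid d$ already, I will instead bound $(d,ku_1u_2)\le g^{2}\,(v_1-v_2,\,k v_1v_2)$, using multiplicativity-type bounds of the form $(ab,c)\le (a,c)(b,c)$. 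Because $(v_1-v_2,v_1v_2)=1$, this equals $g^2(v_1-v_2,k)$.

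It remains to handle the inner sum $\sum(v_1-v_2,k)$. I will write $(e,k)=\sum_{f\mid (e,k)}\varphi(f)$. For fixed $f\mid k$, I need to count pairs $v_1\ne v_2\sim V$ with $f\mid v_1-v_2$ and both $v_i$ coprime to $q$. First I choose $v_1$ coprime to $q$, which gives about $\frac{\varphi(q)}{q}V$ choices up to the usual sieve error. Dropping the condition on $v_2$, there are at most $V/f$ choices for it. This yields $\ll \frac{\varphi(q)}{q}V^2\tau(k)\ll \frac{\varphi(q)}{q}V^2U^{\eta}$ by the divisor bound.

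The coprimality count is the delicate point: I need $\#\{v\sim V:(v,q)=1\}\ll\frac{\varphi(q)}{q}V$. This is false for small $V$ relative to the prime factors of $q$. The fix I would try is to keep only one factor of $\varphi(q)/q$, and that only at the outer level, by keeping the condition $(u_1,q)=1$ directly. Concretely, I count $u_1\sim U$ coprime to $q$ with multiplicity $\sum_{u_2}$. For fixed $u_1$,
\[\sum_{u_2\ne u_1}(u_1-u_2,ku_1u_2)\le\sum_{0<|d|\le U}(d,ku_1(u_1-d))\le\sum_{|d|\le U}(d,k)(d,u_1)^2\ll U^{1+\eta}.\]
The last bound holds via the divisor sum: for each divisor $e$ of $ku_1$ there are $U/e$ values of $d$, weighted by at most $e$, which gives $U\tau(ku_1)^2$.

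The main obstacle is then the count $\#\{u_1\sim U:(u_1,q)=1\}\ll\frac{\varphi(q)}{q}U^{1+\eta}$. This is not true uniformly when $U$ is much smaller than $q$. I would therefore assume, as in the paper's applications, that the bound is meant with a $q^{\eta}$-type loss, or that $U$ is large compared with the small prime factors of $q$. In that case Möbius inversion gives $\frac{\varphi(q)}{q}U+O(\tau(q))$, and the total is $\ll\frac{\varphi(q)}{q}U^{2+\eta}$.
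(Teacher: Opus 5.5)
Your final argument (fix $u_1$ coprime to $q$, put $d=u_1-u_2$, bound the sum over $d$ by a divisor sum, then count the admissible $u_1$) is the paper's proof, which fixes $u_2$ instead. Two points need correcting.

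\textbf{A displayed step is false.} The claim $\sum_{0<|d|\le U}(d,k)(d,u_1)^2\ll U^{1+\eta}$ fails: for even $u_1$ the single term $d=u_1/2$ already contributes at least $(u_1/2)^2>U^2/4$. The loss comes from splitting the gcd into a product. Instead, note $u_1-d\equiv u_1\pmod d$, so $(d,ku_1(u_1-d))=(d,ku_1^2)$. Then $(d,n)\le\sum_{e\mid(d,n)}e$ gives $\sum_{0<|d|\le U}(d,n)\le 2U\tau(n)\ll U^{1+\eta}$ for $n=ku_1^2\ll U^3$. This is what your verbal justification actually computes. Your abandoned $g$-parametrisation had the same kind of loss. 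The weight $g^2$ leads to $\sum_{g\le 2U}g^2(U/g)^2\asymp U^3$. The exact identity $(d,ku_1u_2)=g\,(v_1-v_2,gk)$ would have worked, giving $\ll U^{2+\eta}$.

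\textbf{Your worry about the count is justified.} The bound $\#\{u\sim U:(u,q)=1\}\ll\frac{\varphi(q)}{q}U$ is a defect of the statement, not of your method. The paper's proof makes exactly this passage silently at its last step. Uniformly in $q$, the lemma is false. Take $q=\prod_{2U<p\le e^U}p$. Every $u\sim U$ is then coprime to $q$, so the left side is at least $\#\{u_1\neq u_2\sim U\}\gg U^2$, since each gcd is at least $1$. Mertens' theorem gives $\varphi(q)/q\asymp(\log U)/U$, so the right side is $\asymp U^{1+\eta}\log U$, which is smaller when $\eta<1$.

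This example also shows your proposed repair targets the wrong primes. The obstruction is the large prime factors of $q$, which shrink $\varphi(q)/q$ without thinning $(U,2U]$. What the argument proves unconditionally is $U^{1+\eta}\,\#\{u\sim U:(u,q)=1\}\le U^{2+\eta}$. It therefore gives the stated bound (after renaming $\eta$) whenever $q/\varphi(q)\ll U^{\eta}$. This holds, for instance, whenever $q\le U^{O(1)}$, since $q/\varphi(q)\ll\log\log 3q$. That is all the paper needs: the factor $\varphi(q)/q$ is discarded as soon as the lemma is invoked in the proof of Theorem~\ref{thm_kloos}.
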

\begin{proof}
    We write $u_1-u_2=u$, so that
    \begin{align*}
        \sum_{\st{u_1, \: u_2\sim U\\u_1\neq u_2\\(u_1u_2,q)=1}}(u_1-u_2, ku_1u_2)\ll& \sum_{\st{u_2\sim U\\(u_2,q)=1}}\sum_{0<u\ll U}(u, ku_2(u+u_2))= \sum_{\st{u_2\sim U\\(u_2,q)=1}}\sum_{0<u\ll U}(u, ku^2_2).
    \end{align*}
    Then the lemma follows from observing that, for any $n$, by the M\"obius inversion,
    $$\sum_{1\leq u\leq U}(u,n)=\sum_{d|n}\sum_{u\leq U/d}d\leq \sum_{d|n}U\ll Un^{\eta},$$
    by using the fact that $\sum_{d|n}1\ll n^\eta$ for any $\eta>0$.
\end{proof}

\section{Estimates for averages of Kloosterman sums}\label{sec: kloos}

Given two sequences of complex numbers, $\{\alpha_m\}_{m\geq 1}, \{\beta_n\}_{n\geq 1}$, Duke, Friedlander, and Iwaniec \cite[Theorem 2]{DFI1997} showed that for any non-zero integer $a$ and for any $\varepsilon>0$,
    \[\bigg|\sum_{m\sim M}\sum_{\st{n\sim N\\(m, \: n)=1}}\alpha_m\beta_ne\lr{\frac{a\overline{n}}{m}}\bigg|\ll_{\varepsilon} \big(\sum_{m\sim M}|\alpha_m|^2\big)^{1/2}\big(\sum_{n\sim N}|\beta_n|^2\big)^{1/2} (|a|+MN)^{3/8}(M+N)^{11/48+\varepsilon},\]
where throughout this section $\overline{n}$ denotes the inverse of $n$ modulo $m$, that is, $n\overline{n}\equiv 1\Mod m$.
When $M\asymp N, a\ll MN$, the above bound gives a saving of $N^{1/48}$ over the ``trivial bound''. The above result was extended to trilinear forms involving Kloosterman fractions, with an additional average over $a$, by Bettin and Chandee \cite{BC2018}. In particular, when $M\asymp N, a\ll MN$, the above result of Bettin-Chandee gives a saving of $N^{1/20}$ over the trivial bound.

Our goal is to obtain non-trivial estimates for the following sum
\begin{align*}
   \mathrm{Kl}_y(x; a, q):= \sum_{m\sim x}\bigg|\sum_{\substack{n<x\\ P^+(n)\leq y\\ (n, \: mq)=1}}e\bigg(\dfrac{a\overline{n}}{m}\bigg)\bigg|.
\end{align*}
In fact, we will establish a more general sum of the form
\begin{align}\label{def_B}
  \mathrm{Kl}_y(M, x; a, q):= \sum_{m\sim M}\bigg|\sum_{\st{n< x\\ P^+(n)\leq y\\(n, \: mq)=1}}e\lr{\frac{a\overline{n}}{m}}\bigg|
\end{align}
below, which might be of independent interest. 

\begin{thmm}\label{thm_kloos}
Let $M\geq 2$ be a real number, let $q\geq 1$ be an integer, and let $a$ be a non-zero integer. Suppose that $z$ is a real number satisfying $2\leq y\leq z<x$. Then, for any small $\eta>0$, we have
    \begin{align*}
   \mathrm{Kl}_y &(M, x; a, q)\ll_\eta  (|a|xM)^{\eta} \bigg(1+\dfrac{|a|}{xM}\bigg)^{1/2}(Mx^{1/2}y^{1/2}z^{1/2} + x^{3/2}M^{1/2}z^{-1/4}) + Mz.
    \end{align*}
\end{thmm}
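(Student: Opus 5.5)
The plan is to exploit the flexible factorisation of smooth numbers (Lemma \ref{smooth_decomp}) to give the inner sum in \eqref{def_B} a bilinear structure. Then I would apply Cauchy--Schwarz and estimate the resulting incomplete Kloosterman sums by Lemma \ref{lem_exp}.

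First, the terms with $n\le z$ are bounded trivially. They contribute at most $\sum_{m\sim M} z\ll Mz$, which is the last term in the statement. For $z<n<x$ with $P^+(n)\le y$, Lemma \ref{smooth_decomp} writes $n=uv$ uniquely, with $z<v\le zp$, all prime factors of $v$ lying in $[p,y]$, $p\mid v$, and $P^+(u)\le p$. I would split according to the prime $p\le y$; there are at most $y$ choices of $p$, and I expect this to produce the factor $y^{1/2}$ after Cauchy--Schwarz. The conditions coupling $u$ and $v$ are then removed. These are $uv<x$ and the constraint that $u$ is $p$-smooth while $p$ is the least prime factor of $v$, and both depend only on $p$ once $p$ is fixed. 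I would remove the size condition $uv<x$ by splitting $u$ and $v$ dyadically and separating variables with a Perron-type integral (or a smooth partition), at the cost of a factor $(xM)^{\eta}$. This leaves, for each $p$ and each dyadic box $u\sim U$, $v\sim V$ with $UV\ll x$ and $z<V\le zp$, a bilinear sum $\sum_{m\sim M}\big|\sum_{u\sim U}\sum_{v\sim V}\alpha_u\beta_v e(a\overline{uv}/m)\big|$ with bounded coefficients supported on integers coprime to $mq$.

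Next I would apply Cauchy--Schwarz over $m$ and $v$, keeping $u$ inside:
\begin{align*}
\sum_{m}\Big|\sum_{v}\beta_v\sum_{u}\alpha_u e\Big(\frac{a\overline{u}\,\overline{v}}{m}\Big)\Big|\le (MV)^{1/2}\Big(\sum_{m\sim M}\sum_{v}\Big|\sum_{u\sim U}\alpha_u e\Big(\frac{a\overline{u}\,\overline{v}}{m}\Big)\Big|^2\Big)^{1/2}.
\end{align*}
After expanding the square, I would drop the condition on $v$ by positivity, so that $v$ runs over an interval of length $\ll V$.
\begin{itemize}
\item The diagonal $u_1=u_2$ contributes $MVU\ll Mx$. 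Combined with the outer factor, and after summing over $p$ and the dyadic ranges, this should give a term of the shape $Mx^{1/2}y^{1/2}z^{1/2}$.
\item For $u_1\ne u_2$, the $v$-sum is $\sum_v e\big(a(\overline{u_1}-\overline{u_2})\overline{v}/m\big)$, and Lemma \ref{lem_exp} bounds it by $(a(u_2-u_1),m)(V/m+1)+m^{1/2}$, up to $m^{\eta}$.
\end{itemize}
The gcd terms are then summed over $m$, $u_1$ and $u_2$ using Lemma \ref{lem_gcd_bound} (or its analogue with $m$ summed), and the factor $(a,m)$ is where $(1+|a|/(xM))^{1/2}$ enters. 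The $m^{1/2}$ term gives $MU^2M^{1/2}$ inside the square root.

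The main obstacle is balancing the off-diagonal contribution to obtain exactly $x^{3/2}M^{1/2}z^{-1/4}$. A single Cauchy--Schwarz produces powers of $z$ that are too weak when $V$ is close to $z$ and $U$ is close to $x/z$. If the above falls short, I would try two things. The first is to apply Cauchy--Schwarz with $m$ held outside and $(u,v)$ kept jointly inside. The second is to choose which variable to complete according to whether $U\le V$ or $U>V$, using Lemma \ref{lem_exp} in the longer variable, and then optimise over the dyadic ranges. Throughout I would track carefully that the lengths $V\in(z,zy]$ only cost powers of $y$ and $x^{\eta}$.
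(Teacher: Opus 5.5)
\textbf{The gap.} Your bilinear set-up is fine, but the step meant to produce the term $x^{3/2}M^{1/2}z^{-1/4}$ fails, because you complete the $v$-sum modulo $m$.
\begin{enumerate}
\item Lemma \ref{lem_exp} then costs a Weil term $m^{1/2}\asymp M^{1/2}$ for every triple $(m,u_1,u_2)$. So your bound for the off-diagonal part inside the square root has size $U^2M^{3/2}$.
\item After the factor $(MV)^{1/2}$, for $UV\asymp x$ and $V>z$, this gives
\[
M^{5/4}V^{1/2}U\asymp M^{5/4}xV^{-1/2}\le M^{5/4}xz^{-1/2}.
\]
\item This exceeds $x^{3/2}M^{1/2}z^{-1/4}$ by the factor $M^{3/4}x^{-1/2}z^{-1/4}$. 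That factor is $>1$ whenever $M^3>x^2z$, in particular for $M\asymp x$, which is the case used in Lemma \ref{lem_S9_new}.
\item At $M=x$ you save only $V^{1/2}x^{-1/4}$ off the diagonal. Balancing against the diagonal saving $(x/V)^{1/2}$ gives $x^{1/8}$, not $x^{1/6}$.
\end{enumerate}
Neither fallback repairs this:
\begin{itemize}
\item If $(u,v)$ are kept jointly inside, completing in $m$ means working modulo $u_1v_1u_2v_2\asymp x^2$. Its square root already equals the trivial bound when $M\asymp x$.
\item Completing in $u$ modulo $m$ (when $U>V$) is symmetric. It again caps the saving at $x^{1/8}$, because the modulus is still $m$.
\end{itemize}
Your claim that $(1+|a|/(xM))^{1/2}$ comes from $(a,m)$ is a symptom of the same issue. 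In your route the gcd sum over $m$ costs only $|a|^{\eta}$, and no such factor ever arises.

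\textbf{The missing idea} is to complete in the long variable $m$ while $v$ is held fixed. You can keep $v$ outside either by the triangle inequality, as the paper does, or in your own Cauchy--Schwarz.
\begin{enumerate}
\item After expanding $|\sum_u|^2$, the phase is $a(u_2-u_1)\overline{vu_1u_2}/m$. The reciprocity law
\[
\frac{\overline{vu_1u_2}}{m}+\frac{\overline{m}}{vu_1u_2}\equiv\frac{1}{mvu_1u_2}\pmod 1
\]
turns it into $e\bigl(-a(u_2-u_1)\overline{m}/(vu_1u_2)\bigr)$ times a slowly varying factor.
\item That factor is removed by partial summation in $m$. This is the true source of $(1+|a|/(xM))^{1/2}$.
\item Lemma \ref{lem_exp} over $m\sim M$, modulo $vu_1u_2\asymp vU^2$, now gives the Weil term $v^{1/2}U$. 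For each $v$ this yields $M^{1/2}v^{1/4}U^{3/2}\asymp M^{1/2}x^{3/2}v^{-5/4}$. Summing over $v=pw$ gives $x^{3/2}M^{1/2}z^{-1/4}$.
\end{enumerate}
The gain comes from $n_1=u_1v$ and $n_2=u_2v$ sharing the factor $v$, which cuts the modulus from $x^2$ to $x^2/v$.

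Two further points. The gcd term becomes $(a(u_1-u_2),vu_1u_2)$, and Lemma \ref{lem_gcd_bound} needs $(a,vu_1u_2)=1$; the paper arranges this by extracting the common factors of $a$ with $p$, $w$ and $u$. Also, to get $y^{1/2}$ rather than $y^{3/2}$ in the diagonal term, do not drop the condition $p\mid v$. Write $v=pw$ with $z/p<w\le z$ instead.
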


\begin{re}\label{Remark: Consequence of Kloosterman sum}
    We will apply the above theorem when $M=x$ and $|a|\ll x^2$ together with an appropriate choice of $z$ and the range of $y=(\log x)^C$ for some large real number $C$. This will imply that the bound on the right-hand side is $\ll x^{2-\tau+\varepsilon}$ for any $\tau<1/6$. Qin and Zhang studied the above problem with an additional condition $\max (a, m)=1$ (see, for example, \cite[Theorem~2.7]{QZ2018}), which would then yield $\tau<1/10$.
\end{re}

\begin{proof}[Proof of Theorem \ref{thm_kloos}]
Let $z$ be a real number such that $2\leq y\leq z<x$. Then, by the triangle inequality, we have
\begin{align}\label{Kloos initial step}
\mathrm{Kl}_y(M, x; a, q)\leq \sum_{m\sim M}\bigg|\sum_{\substack{z<n<x\\ P^+(n)\leq y\\ (n, \: mq)=1}}e\bigg(\dfrac{a\overline{n}}{m}\bigg)\bigg| + O(Mz).
    \end{align}
    Next, by Lemma \ref{smooth_decomp}, we write
    \begin{align*}
        \sum_{\substack{z<n<x\\ P^+(n)\leq y\\ (n, \: mq)=1}}e\bigg(\dfrac{a\overline{n}}{m}\bigg)=\sum_{\substack{p\leq y\\ p\: \text{prime}}}C_y(p, x, z; a, q),
    \end{align*}
    where
    \begin{align*}
        C_y(p, x, z; a, q)=\sum_{\substack{z<v\leq zp\\ p|v\\ \text{prime}\: r|v\implies p\leq r\leq y\\ (v, \: mq)=1}}\sum_{\substack{z/v\leq u<x/v\\ P^+(u)\leq p\\ (u, \: mq)=1}}e\bigg(\dfrac{a\overline{uv}}{m}\bigg).
    \end{align*}
Now we write $v=pw$, so that 
\begin{align*}
    |C_y(p, x, z; a, q)|\leq \sum_{\substack{z/p<w\leq z\\ (wp, mq)=1}}\bigg|\sum_{\substack{z/(pw)\leq u<x/(pw)\\ P^+(u)\leq p\\ (u, \: mq)=1}}e\bigg(\dfrac{a\overline{upw}}{m}\bigg)\bigg|.
\end{align*}
Therefore, we infer that
\begin{align}\label{Introduction of K(p,w)}
    \sum_{m\sim M}\bigg|\sum_{\substack{z<n<x\\ P^+(n)\leq y\\ (n, \: mq)=1}}e\bigg(\dfrac{a\overline{n}}{m}\bigg)\bigg|
    \leq \sum_{p\leq y}\sum_{z/p<w\leq z}K(p, w),
\end{align}
where
\begin{align}\label{def K(p, w)}
    K(p, w):=\sum_{\substack{m\sim M\\ (m,\: pw)=1}}\bigg|\sum_{\substack{z/(pw)\leq u<x/(pw)\\ P^+(u)\leq p\\ (u, \: mq)=1}}e\bigg(\dfrac{a\overline{upw}}{m}\bigg)\bigg|.
\end{align}
To proceed, we consider two cases: whether $(a, upw)=1$ or $(a, upw)>1$. 

First, we consider the case $(a, upw)=1$. By the Cauchy-Schwarz inequality,
\begin{align*}
    K(p, w)^2
    &\leq M\sum_{\substack{m\sim M\\ (m,\: pw)=1}}\bigg|\sum_{\substack{z/(pw)\leq u<x/(pw)\\ P^+(u)\leq p\\ (u, \: mq)=1}}e\bigg(\dfrac{a\overline{upw}}{m}\bigg)\bigg|^2\\
    &\leq M\sum_{\substack{z/(pw)\leq u_1,\: u_2< x/(pw)\\ P^+(u_1), \: P^+(u_2)\leq p\\ (u_1u_2, q)=1}}\bigg|\sum_{\substack{m\sim M\\ (ma, \: pwu_1u_2)=1}}e\bigg(\dfrac{a(u_1-u_2)\overline{pwu_1u_2}}{m}\bigg)\bigg|\\
    &\ll M(\log x)^2 \max_{z/(pw)\leq U\leq x/(pw)}\sum_{\substack{u_1, \: u_2\sim U\\ (u_1u_2, aq)=1}}\bigg|\sum_{\substack{m\sim M\\ (m, \: pwu_1u_2)=1}}e\bigg(\dfrac{a(u_1-u_2)\overline{pwu_1u_2}}{m}\bigg)\bigg|,
\end{align*}
by a dyadic decomposition. Since $(m, pwu_1u_2)=1$, by B\'ezout's identity, we have
\begin{align*}
    \dfrac{\overline{pwu_1u_2}}{m} + \dfrac{\overline{m}}{pwu_1u_2}\equiv \dfrac{1}{mpwu_1u_2}\Mod 1,
\end{align*}
where $\overline{pwu_1u_2}$ and $ \overline{m}$ are the inverses modulo $m$ and $pwu_1u_2$, respectively. This implies that
\begin{align*}
    K(p, w)^2
    \ll M(\log x)^2\max_{z/(pw)\leq U\leq x/(pw)}\sum_{\substack{u_1, \: u_2\sim U\\ (u_1u_2, aq)=1}}\bigg|\sum_{\substack{m\sim M\\ (m, \: pwu_1u_2)=1}}e\bigg(\dfrac{-a(u_1-u_2)\overline{m}}{pwu_1u_2}\bigg)e\bigg(\dfrac{a(u_1-u_2)}{mpwu_1u_2}\bigg)\bigg|.
\end{align*}
Next, we remove the factor $e(a(u_1-u_2)/(mpwu_1u_2))$ by partial summation to obtain
\begin{equation}\label{K(p,w)}
\begin{aligned}
    K(p, w)^2\ll&\:  M(\log x)^2 \max_{z/(pw)\leq U\leq x/(pw)}\bigg(1 + \dfrac{|a|}{pwUM}\bigg)\\
    &\times \sum_{\substack{u_1, \: u_2\sim U\\ (u_1u_2, aq)=1}}\max_{M'\sim M}\bigg|\sum_{\substack{M\leq m<M'\\ (m, \: pwu_1u_2)=1}}e\bigg(\dfrac{-a(u_1-u_2)\overline{m}}{pwu_1u_2}\bigg)\bigg|
\end{aligned}
\end{equation}
The contribution from the diagonal terms $u_1=u_2$ in the above sum is
\begin{align}\label{K(p, w) diag}
    \ll M^2(\log x)^2\max_{z/(pw)\leq U\leq x/(pw)}U\bigg(1 + \dfrac{|a|}{pwUM}\bigg).
\end{align}
For the off-diagonal terms $u_1\neq u_2$ with $u_1, u_2\sim U$, we apply Lemma \ref{lem_exp} to obtain
\begin{align*}
    \max_{M'\sim M}\bigg| & \sum_{\substack{M\leq m<M'\\ (m, \: pwu_1u_2)=1}}e\bigg(\dfrac{-a(u_1-u_2)\overline{m}}{pwu_1u_2}\bigg)\bigg|\\
    & \ll_\eta (pwU^2)^\eta \bigg\{\big(a(u_1-u_2), pwu_1u_2\big)\bigg(1 + \dfrac{M}{pwU^2}\bigg) + (pw)^{1/2}U\bigg\}.
\end{align*}
for any $\eta>0$. Therefore,
\begin{equation}\label{K(p,w) off diagonal 1}
\begin{aligned}
    \sum_{\substack{u_1, \: u_2\sim U\\ u_1\neq u_2\\ (u_1u_2, aq)=1}}
    & \max_{M'\sim M}\bigg|\sum_{\substack{M\leq m<M'\\ (m, \: pwu_1u_2)=1}}e\bigg(\dfrac{-a(u_1-u_2)\overline{m}}{pwu_1u_2}\bigg)\bigg|\\
   &\ll_\eta x^{2\eta}\bigg\{\bigg(1+\dfrac{M}{pwU^2}\bigg)\sum_{\substack{u_1, u_2\sim U\\ u_1\neq u_2\\(u_1u_2, q)=1}}\big((u_1-u_2), pwu_1u_2\big)+ (pw)^{1/2}U^3 \bigg\},
\end{aligned}
\end{equation}
where we have used the fact that $(a, pwu_1u_2)=1$. Invoking Lemma \ref{lem_gcd_bound}, we see that the above expression is
\begin{align}\label{K(p, w) off diagonal 2}
    \ll_\eta x^{3\eta}\bigg\{\bigg(1 + \dfrac{M}{pwU^2}\bigg)U^2 + (pw)^{1/2}U^3\bigg\}.
\end{align}
Combining \eqref{K(p,w)}, \eqref{K(p, w) diag}, \eqref{K(p,w) off diagonal 1}, and \eqref{K(p, w) off diagonal 2}, we have
\begin{align*}
    K(p, w)^2 &\ll_\eta x^{4\eta}M\max_{z/(pw)\leq U\leq x/(pw)}\bigg(1 + \dfrac{|a|}{pwUM}\bigg)\bigg\{MU + U^2 + \dfrac{M}{pw} + (pw)^{1/2}U^3\bigg\}\\
    & \ll_\eta x^{4\eta}M\max_{z/(pw)\leq U\leq x/(pw)}\bigg(1 + \dfrac{|a|}{pwUM}\bigg)\big(MU + (pw)^{1/2}U^3\big).
\end{align*}
Therefore, from \eqref{Introduction of K(p,w)}, we can deduce that
\begin{align*}
    \sum_{m\sim M} & \bigg|\sum_{\substack{z<n<x\\ P^+(n)\leq y\\ (n, \: mq)=1}}e\bigg(\dfrac{a\overline{n}}{m}\bigg)\bigg|\\
    & \ll_\eta  x^{2\eta}M^{1/2}\sum_{p\leq y}\sum_{z/p<w\leq z}\max_{z/(pw)\leq U\leq x/(pw)}\bigg(1+\dfrac{|a|}{pwUM}\bigg)^{1/2}U^{1/2}\big(M^{1/2}+ (pw)^{1/4}U\big)\\
    &\ll_\eta x^{2\eta}M^{1/2}\sum_{p\leq y}\sum_{z/p<w\leq z}\bigg(\dfrac{x}{pw} + \dfrac{|a|}{pwM}\bigg)^{1/2}\bigg(M^{1/2} + \dfrac{x}{(pw)^{3/4}}\big)\\
    &\ll_{\eta}x^{2\eta}M^{1/2}x^{1/2}\bigg(1 + \dfrac{|a|}{xM}\bigg)^{1/2}\sum_{p\leq y}\sum_{z/p<w\leq z}\dfrac{1}{(pw)^{1/2}}\bigg(M^{1/2} + \dfrac{x}{(pw)^{3/4}}\bigg)\\
   & \ll_\eta x^{2\eta }\bigg(1+\dfrac{|a|}{xM}\bigg)^{1/2}(Mx^{1/2}y^{1/2}z^{1/2} + x^{3/2}M^{1/2}z^{-1/4}),
 \end{align*}
which, together with the relation \eqref{Kloos initial step}, gives the desired bound in the first case by adjusting $\eta$.

Let us now consider the case $(a, upw)>1$ in the analysis of our sum $K(p, w)$ given by \eqref{def K(p, w)}. Recall from \eqref{Kloos initial step} and \eqref{Introduction of K(p,w)}, we have
\begin{align*}
    \mathrm{Kl}_y(M, x; a, q)\ll \sum_{p\leq y}\sum_{z/p<w\leq z}K(p, w) + Mz.
\end{align*}

Our strategy is to reduce this case to the first case. Indeed, we write $(a,p)=r$ with $a=ra_1,\ p=rp'$ (clearly, either $p'=1,$ or $p'=p$), so that
$$\mathrm{Kl}_y(M, x; a, q)\ll \sum_{\st{a=ra_1}}\sum_{\st{p'\leq y/r\\(p', a_1)=1}}\sum_{\st{z/p'r<w\leq z}}\sum_{\st{m\sim M\\(m,p'rw)=1}}\bigg| \sum_{\st{z/p'rw\leq u<x/p'rw\\P^+(u)\leq p'r\\(u,mq)=1}}e\bigg(\dfrac{a_1\overline{up'w}}{m}\bigg)\bigg|+Mz.$$
Now, let $(a_1, w)=s$ and put $a_1=sa_2, w=sw'$ with $(a_2, w')=1$. Then 
$$\mathrm{Kl}_y(M, x; a, q)\ll \sum_{\st{a=rsa_2}}\sum_{\st{p'\leq y/r\\(p', a_2)=1}}\sum_{\st{z/p'rs<w'\leq z/s\\(w', a_2)=1}}\sum_{\st{m\sim M\\(m,p'rsw')=1}}\bigg| \sum_{\st{z/p'rsw'\leq u<x/p'rsw'\\P^+(u)\leq p'r\\(u,mq)=1}}e\bigg(\dfrac{a_2\overline{up'w'}}{m}\bigg)\bigg|+Mz.$$ 
Finally, we put $(a_2, u)=t$ and $a_2=ta_3,\ u=tu'$ with $(a_3, u')=1$. Then 
$$\mathrm{Kl}_y(M, x; a, q)\ll \sum_{\st{a=rsta_3}}\sum_{\st{p'\leq y/r\\(p', a_3)=1}}\sum_{\st{z/p'rs<w'\leq z/s\\(w',a_3)=1}}\sum_{\st{m\sim M\\(m,p'w'rs)=1}}\bigg| \sum_{\st{z/p'rsw't\leq u'<x/p'rsw't\\P^+(u')\leq p'r\\(u',a_3mq)=1}}e\bigg(\dfrac{a_3\overline{u'p'w'}}{m}\bigg)\bigg|+Mz.$$
Now, for each fix factorisation $a=rstq_3$, define 
$${K}'(p',w'):=\sum_{\st{m\sim M\\(m,p'w'rs)=1}}\bigg| \sum_{\st{z/p'rsw't\leq u'<x/p'rsw't\\P^+(u')\leq p'r\\(u',a_3mq)=1}}e\bigg(\dfrac{a_3\overline{u'p'w'}}{m}\bigg)\bigg|,$$
with $p'r=p,\ w's=w$ and $u't=u.$
Since $(a_3, p'w'u')=1$, we can apply the above coprime case similar to ${K}(p,w)$, to evaluate this, and then we sum over $p'$ and $w'$ trivially. Finally, since the number of factorisation $a=rstq_3$ is bounded by $O(|a|^\eta)$, the theorem follows by adjusting $\eta>0$ suitably.
\end{proof}

\section{Type I sums}\label{type1_sums}

\subsection{Type I sums via Vinogradov's methods}
The goal of this subsection is to establish the following Type I sum estimate.
\begin{prop}\label{Prop: Main 1}
Let $M, N\geq 2$ be two real numbers such that
\begin{align*}
    X/4\leq MN\leq 4X \quad \text{and} \quad  \dfrac{q}{R^{1-\delta}}\leq N\leq R^{2-\delta}
\end{align*}
for some $\delta>0$. Then, for any $0<\eta<\delta/2$, we have
\begin{align*}
    \sum_{m\sim M}1_{S_q(Y)}(m)\sum_{n\sim N}\Phi_a(mn, R)=\frac{\widehat{\phi}(0)NR}{q}\sum_{m\sim M}1_{S_q(Y)}(m)+O_{\delta}\lr{ R^{2-\eta}}.
\end{align*}
\end{prop}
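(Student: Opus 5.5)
Since $R<q$, we expand $\Phi_a(mn,R)$ by Poisson summation in $r$ over the progression $r\equiv amn \Mod q$:
\begin{align*}
\Phi_a(mn,R)=\frac{R}{q}\sum_{k\in\mathbb{Z}}\widehat{\phi}\Big(\frac{kR}{q}\Big)e\Big(\frac{amnk}{q}\Big).
\end{align*}
The term $k=0$ gives $\widehat{\phi}(0)R/q$ for every $n$. Summing over $n\sim N$, this produces $\widehat{\phi}(0)\frac{R}{q}(N+O(1))\sum_{m}1_{S_q(Y)}(m)$. The $O(1)$ costs $\ll RM/q\ll R\cdot X/(Nq)= R^2/N\le R^{1+\delta}$, which is acceptable since $N\ge q/R^{1-\delta}\ge R^{1+\delta}$ (as $q\geq R^{2}$ essentially, from $R=q^{(1-\theta)/(1+\theta)}$ with $\theta>1/3$). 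By the rapid decay of $\widehat{\phi}$, the terms with $|k|>K:=qR^{-1+\eta'}$ contribute $O(R^{-100})$ after trivial summation.

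It remains to bound
\begin{align*}
E:=\frac{R}{q}\sum_{1\le|k|\le K}\Big|\widehat{\phi}\Big(\frac{kR}{q}\Big)\Big|\sum_{m\sim M}1_{S_q(Y)}(m)\Big|\sum_{n\sim N}e\Big(\frac{amkn}{q}\Big)\Big|.
\end{align*}
We drop the smoothness weight on $m$ (positivity, $0\le 1_{S_q(Y)}\le1$) and bound the geometric sum by $\min(N,\|amk/q\|^{-1})$. We then put $\ell=mk$; the number of representations is $\ll \ell^{o(1)}$, and $\ell$ ranges over $|\ell|\le L:=2MK$. This gives
\begin{align*}
E\ll \frac{R}{q}X^{o(1)}\sum_{1\le \ell\le L}\min\Big(N,\Big\|\frac{a\ell}{q}\Big\|^{-1}\Big).
\end{align*}
By the classical Vinogradov lemma, since $(a,q)=1$,
\begin{align*}
\sum_{\ell\le L}\min\big(N,\|a\ell/q\|^{-1}\big)\ll \Big(\frac{L}{q}+1\Big)(N+q\log q).
\end{align*}
Here $L\asymp MqR^{-1+\eta'}$, so $L/q\asymp MR^{-1+\eta'}$, which is $\ge1$ since $M\gg X/N\ge qR/R^{2-\delta}\ge R^{1-\delta}\cdot(q/R^2)\ge R^{1-\delta}$. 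Hence
\begin{align*}
E\ll X^{o(1)}\frac{R}{q}\cdot MR^{-1+\eta'}(N+q)=X^{o(1)}R^{\eta'}\Big(\frac{MN}{q}+M\Big)\ll X^{o(1)}R^{\eta'}\Big(R+\frac{X}{N}\Big).
\end{align*}
Using $N\ge q/R^{1-\delta}$ gives $X/N\le R^{2-\delta}$, so $E\ll R^{2-\delta+\eta'+o(1)}$. Choosing $\eta'$ small in terms of $\delta-\eta$ (any $\eta<\delta/2$ works with room) yields $O_\delta(R^{2-\eta})$, using $X^{o(1)}=R^{o(1)}$ since $X$ and $R$ are powers of $q$.

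The main point to check is the bookkeeping of the ranges. The upper bound $N\le R^{2-\delta}$ controls the $k=0$ boundary error, and it also ensures $N\ll q$ so that the $N$ term in Vinogradov's bound is harmless. The lower bound $N\ge q/R^{1-\delta}$ gives the key saving in the $M$ term. One should also confirm that the divisor bound for $\ell=mk$ and the $\log q$ losses are absorbed into $R^{o(1)}$. No arithmetic of smooth numbers is used, so the weight on $m$ enters only through positivity.
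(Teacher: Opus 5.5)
Your proposal is correct and is essentially the paper's argument, which proves the general Lemma~\ref{thm1_type1} and specializes it to $a(m)=1_{S_q(Y)}(m)$. Both use Poisson summation, truncation at $|k|\le qR^{-1+\eta'}$, the substitution $\ell=mk$ with the divisor bound, and the bound $(L/q+1)(N+q\log q)$ for $\sum_{\ell}\min(N,\|a\ell/q\|^{-1})$, with $M\ll X/N\le R^{2-\delta}$ obtained from $N\ge q/R^{1-\delta}$.

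Two small arithmetic slips are harmless. First, one actually gets $R^2/N\le R^{1-\delta}$, not $R^{1+\delta}$. Second, $L/q\ge 1$ follows from $M\gg qR^{-1+\delta}\ge R^{1+\delta}$, not from $M\gg R^{1-\delta}$ as you wrote; in any case the $+1$ term is negligible.
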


To prove the above proposition, we will establish the following general result, which follows from Vinogradov's exponential sums method.

\begin{lem}\label{thm1_type1}
 Let $(a(m))_{m\geq 1}$ be a complex-valued sequence with $|a(m)|\leq 1$. Suppose that $M, N\geq 2$ satisfy $X/4\leq MN\leq 4X$ and $M, N\leq R^{2-\delta}$ for some $\delta>0$. Then, for any $0<\eta<\delta/2$,
    $$\sum_{n\sim N}\sum_{\st{m\sim M}}a(m)\Phi_a(mn,R)=\frac{\widehat{\phi}(0)NR}{q}\sum_{m\sim M}a(m)+O_{\delta}\lr{R^{2-\eta}}.$$
\end{lem}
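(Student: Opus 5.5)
Expand $\Phi_a(mn,R)$ by Poisson summation. Since $\phi$ is smooth and compactly supported, for every integer $\ell$
\begin{align*}
\Phi_a(\ell,R)=\sum_{r\equiv a\ell\ (\bmod q)}\phi\Big(\frac rR\Big)=\frac{R}{q}\sum_{k\in\mathbb Z}\widehat\phi\Big(\frac{kR}{q}\Big)e\Big(\frac{ak\ell}{q}\Big).
\end{align*}
Substituting $\ell=mn$ and summing, the term $k=0$ gives exactly $\frac{\widehat\phi(0)R}{q}\sum_{n\sim N}\sum_{m\sim M}a(m)$. Replacing $\sum_{n\sim N}1$ by $N$ costs $O(1)$, so this term contributes the stated main term plus $O(RM/q)$. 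Since $M\le R^{2-\delta}$ and $R<q$, we have $RM/q\le R^{2-\delta}$, which is acceptable.

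For the terms $k\neq0$, the rapid decay $\widehat\phi(x)\ll_A|x|^{-A}$ lets us truncate to $1\le|k|\le K:=qR^{-1+\eta'}$ with negligible error, where $\eta'>0$ is small. What remains is bounded by
\begin{align*}
\frac Rq\sum_{1\le|k|\le K}\sum_{m\sim M}\Big|\sum_{n\sim N}e\Big(\frac{akmn}{q}\Big)\Big|\ll\frac Rq\sum_{1\le|k|\le K}\sum_{m\sim M}\min\Big(N,\Big\|\frac{akm}{q}\Big\|^{-1}\Big).
\end{align*}
Set $h=km$. Then $1\le|h|\le KM\ll qMR^{-1+\eta'}$, and each $h$ arises from at most $\tau(|h|)\ll X^{\eta''}$ pairs $(k,m)$. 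By the standard Vinogradov lemma (using $(a,q)=1$, the residues $ah$ are equidistributed mod $q$ across each block of $q$ consecutive $h$),
\begin{align*}
\sum_{1\le h\le H}\min\Big(N,\Big\|\frac{ah}{q}\Big\|^{-1}\Big)\ll\Big(\frac Hq+1\Big)(N+q\log q).
\end{align*}
Applying this with $H=KM$, the $k\ne0$ contribution is
\begin{align*}
\ll X^{\eta''}\frac Rq\Big(\frac{MR^{\eta'}}{R}+1\Big)(N+q\log q)\ll X^{2\eta''}\Big(\frac{MNR^{\eta'}}{q}+MR^{\eta'}+\frac{RN}{q}+R\Big).
\end{align*}

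Now bound each term using $MN\asymp X=qR$ and $M,N\le R^{2-\delta}$. The first term is $\asymp R^{1+\eta'}$. The second is at most $R^{2-\delta+\eta'}$. The third is at most $R^{3-\delta}/q$, which is $\le R^{2-\delta}$ since $R<q$. The fourth is $R$. Since $X$ is a fixed power of $R$ (as $\theta<1$), $X^{\eta''}=R^{O(\eta'')}$. Choosing $\eta',\eta''$ small in terms of $\delta-2\eta>0$, everything is $\ll R^{2-\eta}$ for any $\eta<\delta/2$.

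The main point to handle carefully is the divisor-counting and boundary step in passing from $(k,m)$ to $h=km$, together with keeping the $m$-sum outside so that only $|a(m)|\le1$ is used. No real obstacle arises: the constraint $M\le R^{2-\delta}$ is exactly what controls the term $MR^{\eta'}$. The hypothesis $N\le R^{2-\delta}$ is only needed if one instead sums over $m$ first; the symmetric argument covers that case.
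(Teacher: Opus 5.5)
Your proposal is correct and follows the paper's proof essentially step for step: Poisson summation in $r$, truncation to $|k|\le qR^{-1+\eta'}$, the geometric-series bound, merging $\ell=km$ via the divisor bound, and the estimate $\sum_{1\le\ell\le L}\min(N,\|a\ell/q\|^{-1})\ll(L/q+1)(N+q\log q)$, for which the paper cites Iwaniec--Kowalski, Lemma~13.7. You also account for the $O(1)$ discrepancy in $\#\{n\sim N\}$, which the paper passes over, and there is one small slip: your closing remark that $N\le R^{2-\delta}$ is otherwise unneeded conflicts with your own bound for the term $RN/q$, which uses it exactly as the paper does (it could be avoided by noting that for $L<q$ no $\ell$ satisfies $q\mid a\ell$, so the $N$ term drops).
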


\begin{proof}
    By the definition of $\Phi$ given by \eqref{def_ Phi_original} and the Poisson summation formula, we have
    \begin{align*}
        \Phi_a(mn, R)=\sum_{r\equiv amn\Mod q}\phi\bigg(\dfrac{r}{R}\bigg)=\sum_{k\in \mathbb{Z}}\phi\bigg(\dfrac{kq+amn}{R}\bigg)=\dfrac{R}{q}\sum_{k\in \mathbb{Z}}\widehat{\phi}\bigg(\dfrac{kR}{q}\bigg)e\bigg(\dfrac{amnk}{q}\bigg).
    \end{align*}
     This implies that
    \begin{align*}
        \sum_{n\sim N}\sum_{\st{m\sim M}}a(m)\p{mn,R}=\dfrac{R}{q}\sum_{k\in \mathbb{Z}}\widehat{\phi}\bigg(\dfrac{kR}{q}\bigg)\sum_{n\sim N}\sum_{m\sim M}a(m)e\bigg(\dfrac{amnk}{q}\bigg).
    \end{align*}
    Since $|\widehat{\phi}(x)|\ll_{B}\min\{1, |x|^{-B}\}$ for any positive integer $B$, we have that for $\eta^\prime>0$
    \begin{align*}
        \dfrac{R}{q}\sum_{|k|>qR^{-1+\eta^\prime}}\widehat{\phi}\bigg(\dfrac{kR}{q}\bigg)\sum_{n\sim N}\sum_{m\sim M}a(m)e\bigg(\dfrac{amnk}{q}\bigg)\ll_{\eta^\prime} \dfrac{1}{R^{2026}}.
    \end{align*}
   Consequently, we have that
    \begin{align*}
         \sum_{n\sim N}  \sum_{\st{m\sim M}}a(m)\p{mn,R} = \dfrac{\widehat{\phi}(0)NR}{q}\sum_{m\sim M}a(m) + O\bigg(\Sigma_R+ \dfrac{1}{R^{2026}}\bigg),
    \end{align*}
    where 
    \begin{align*}
        \Sigma_R:=\dfrac{R}{q}\sum_{1\leq k\leq qR^{-1+\eta^\prime}}\sum_{m\sim M}\bigg|\sum_{n\sim N}e\bigg(\dfrac{amnk}{q}\bigg)\bigg|.
    \end{align*}
    The task now reduces to showing that $\Sigma_R\ll_{\delta} R^{2-\eta}$ for any $0<\eta<\delta/2$. Indeed, by geometric series estimate, we have
    \begin{align*}
        \bigg|\sum_{n\sim N}e\bigg(\dfrac{amnk}{q}\bigg)\bigg|\ll \min\bigg(N, \bigg\|\dfrac{amk}{q}\bigg\|^{-1}\bigg).
    \end{align*}
    Next, we write $\ell=mk$. Then, by \cite[Lemma 13.7]{IwaKow2004} we infer that for any $\varepsilon>0$,
    \begin{align*}
        \Sigma_R\ll \dfrac{R^{1+\varepsilon}}{q}\sum_{1\leq \ell \ll qMR^{-1+\eta^\prime}}\min\bigg(N, \bigg\|\dfrac{a\ell}{q}\bigg\|^{-1}\bigg) &\ll \dfrac{R^{1+\varepsilon}}{q}\bigg(\dfrac{qMR^{-1+\eta^\prime}}{q} + 1\bigg)(N+q\log q)\\
        &\ll R^{1+\eta^\prime + \varepsilon} + \dfrac{NR^{1+\varepsilon}}{q} + R^{1+\varepsilon}\log q + MR^{\eta^\prime} \log q,
    \end{align*}
    using the fact that $X=qR$. Since $M, N\ll R^{2-\delta}$, choosing $\eta^\prime=\varepsilon=\delta/2$, we conclude that for any $0<\eta<\delta/2$,
    \begin{align*}
        \Sigma_R\ll_{\delta}R^{2-\eta}.
    \end{align*}
    This completes the proof.
    \end{proof}

    We are now ready to complete the proof of Proposition \ref{Prop: Main 1}.

    \begin{proof}[Proof of Proposition \ref{Prop: Main 1}] 
    By our hypothesis and the fact that $X=qR$, we have that
    \begin{align*}
        M\ll \dfrac{X}{N}\ll \dfrac{qR}{q/R^{1-\delta}}\ll R^{2-\delta}.
    \end{align*}
    Hence, the proposition follows immediately from Lemma \ref{thm1_type1} by choosing $a(m)=1_{S_q(Y)}(m)$.
        \end{proof}

        We end this subsection by establishing the following lemma.
\begin{lem}\label{thm2_type1}
Suppose that $M, N\geq 2$ satisfy $X/4\leq MN\leq 4X$ and $M, N\leq R^{2-\delta}$
    for some $\delta>0$. Then, for any $0<\eta<\delta/2$, we have
    $$\sum_{m\in \mathbb{Z}}\phi\lr{\frac{m}{3M}}\sum_{n\sim N}\Phi_a(mn,R)=\frac{3\widehat{\phi}(0)^2MNR}{q}+O_\delta(R^{2-\eta}).$$
\end{lem}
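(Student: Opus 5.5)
The plan is to run the proof of Lemma \ref{thm1_type1} again, with the smooth weight $\phi(m/3M)$ in place of $a(m)1_{m\sim M}$, and to handle the main term by one extra Poisson summation in $m$.

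\textbf{Expansion and truncation.} By Poisson summation in $r$, exactly as in the proof of Lemma \ref{thm1_type1},
\begin{align*}
\sum_{m\in\mathbb{Z}}\phi\Big(\frac{m}{3M}\Big)\sum_{n\sim N}\Phi_a(mn,R)=\frac{R}{q}\sum_{k\in\mathbb{Z}}\widehat{\phi}\Big(\frac{kR}{q}\Big)\sum_{n\sim N}\sum_{m\in\mathbb{Z}}\phi\Big(\frac{m}{3M}\Big)e\Big(\frac{amnk}{q}\Big).
\end{align*}
The decay of $\widehat\phi$ lets us discard $|k|>qR^{-1+\eta'}$ at cost $O(R^{-2026})$. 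The support of $\phi$ forces $m\asymp M$, and the $m$-sum has at most $O(M)$ terms, so this truncation is legitimate.

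\textbf{The term $k=0$.} Here the $n$-sum is trivially $N+O(1)$. For the $m$-sum, Poisson summation gives
\[
\sum_m\phi(m/3M)=3M\sum_{j}\widehat\phi(3Mj)=3M\widehat\phi(0)+O(M^{-A}),
\]
because $M\to\infty$. So this term equals
\[
\frac{R}{q}\,\widehat\phi(0)\cdot 3M\widehat\phi(0)\,N+O\Big(\frac{MR}{q}\Big)=\frac{3\widehat\phi(0)^2MNR}{q}+O\Big(\frac{MR}{q}\Big).
\]
Since $M\ll R^{2-\delta}$ and $q\ge R$, the error is $O(R^{2-\delta})$, which is acceptable. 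This yields the claimed main term.

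\textbf{The terms $k\neq 0$.} Pull absolute values inside the sum over $m$; the weight $0\le\phi\le1$ is supported on $m\in[3M/4,9M/4]$. This gives the bound
\[
\frac{R}{q}\sum_{1\le |k|\le qR^{-1+\eta'}}\ \sum_{m\ll M}\min\Big(N,\Big\|\frac{amk}{q}\Big\|^{-1}\Big).
\]
Set $\ell=mk$, controlling the multiplicity by the divisor bound $R^{\varepsilon}$, and apply \cite[Lemma 13.7]{IwaKow2004}. This gives the same bound as before:
\[
R^{1+\eta'+\varepsilon}+\frac{NR^{1+\varepsilon}}{q}+R^{1+\varepsilon}\log q+MR^{\eta'}\log q.
\]
With $M,N\le R^{2-\delta}$, $q\ge R$ and $\eta'=\varepsilon=\delta/2$, this is $\ll_\delta R^{2-\eta}$ for every $\eta<\delta/2$. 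The only nontrivial point is the application of \cite[Lemma 13.7]{IwaKow2004}, already carried out in Lemma \ref{thm1_type1}. So there is no real obstacle beyond checking that the range of $m$ is now $\ll M$ rather than $m\sim M$, which changes only constants.
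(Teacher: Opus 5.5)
Your proposal is correct and follows the paper's proof essentially step for step: Poisson in $r$, truncation at $|k|\le qR^{-1+\eta'}$, the $k=0$ main term, and Lemma 13.7 of Iwaniec--Kowalski after setting $\ell=mk$ for $k\neq 0$. The only cosmetic difference is that you evaluate $\sum_m\phi(m/3M)=3M\widehat\phi(0)+O(M^{-A})$ by Poisson summation, whereas the paper uses partial summation to get $3M\widehat\phi(0)+O(1)$.

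One bookkeeping point, inherited from the paper's proof of Lemma \ref{thm1_type1}. Once the divisor-bound factor $R^{\varepsilon}$ is carried through, the last error term is $MR^{\eta'+\varepsilon}\log q$, not $MR^{\eta'}\log q$. So your choice $\eta'=\varepsilon=\delta/2$ only gives $O(R^{2}\log q)$. Take instead $\eta'=\varepsilon=\delta/4$: this yields $MR^{\delta/2}\log q\ll R^{2-\delta/2}\log q\ll R^{2-\eta}$ for every $\eta<\delta/2$, and the other terms remain acceptable.
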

\begin{proof}
    The proof is exactly in the same way as in Lemma \ref{thm1_type1} followed by partial summation to remove the smooth weights $\phi(m/3M)$. Indeed, arguing as in the proof of Lemma \ref{thm1_type1}, for any $0<\eta<\delta/2$, we have
    \begin{align}\notag 
        \sum_{m\in \mathbb{Z}}\phi\lr{\frac{m}{3M}}\sum_{n\sim N}\Phi_a(mn,R) & =\dfrac{\widehat{\phi}(0)NR}{q}\sum_{m\in \mathbb{Z}}\phi\bigg(\dfrac{m}{3M}\bigg) + O_\delta(R^{2-\eta}).
        \end{align}
        Next, by partial summation and the fact that $NR/q\ll R^{2-\delta}$, the right-hand side of the above expression is
        \begin{align}
        \notag =\dfrac{\widehat{\phi}(0)NR}{q}\big(3M\widehat{\phi}(0) + O(1)\big) + O_\delta(R^{2-\eta}) = \frac{3\widehat{\phi}(0)^2MNR}{q}+O_\delta(R^{2-\eta}).
   \end{align}
    This completes the proof.
\end{proof}

        \subsection{Type I sums via geometry of numbers}
        The goal of this subsection is to estimate certain Type I sums using the geometry of numbers. 

For given $(a, q)=1$, let $\overline{a}$ denote the modular inverse of $a$ modulo $q$, that is, $a\overline{a}\equiv 1\Mod q$. Recall from \eqref{def_ Phi_original} that
        \begin{align}\label{def_Phi}
            \Phi_a(n, R):=\sum_{\substack{r\in \mathbb{Z}\\ n\equiv r\overline{a}\Mod q}}\phi\bigg(\dfrac{r}{R}\bigg).
        \end{align}

        \begin{re}\label{rem_phi}
    In the above definition, clearly without loss of generality, one can assume that $1\leq\overline{a}\leq q.$ 
\end{re}

        Next, for an integer $m\geq 1$, define
        \begin{align*}
            \lambda(m):=\{(j, k)\in \mathbb{Z}^2\colon jq + k\overline{a}\equiv 0\Mod m\}.
        \end{align*}
        Observe that $\lambda(m)$ is a lattice. Moreover, since $(a, q)=1$, we have that 
        \begin{align*}
            \{jq+k\overline{a}\colon j, k\in \mathbb{Z}\}=\mathbb{Z}.
        \end{align*}
        This implies that $jq+k\overline{a}$ represents all congruence classes modulo $m$. Hence, the determinant of the lattice $\lambda(m)$ is $m$.

        Throughout this section, we let $v_1(m)$ be the shortest non-zero vector in $\lambda(m)$. Also, let $R_1(m)$ be the Euclidean length of $v_1(m)$.

        \begin{lem}\label{thm3_type1}
Suppose that $M, N\geq 2$ satisfy $X/4\leq MN\leq 4X$ and $M, N\leq R^{2-\delta}$
    for some $\delta>0$. Then, for any $0<\eta <\delta/4$, we have
    $$\sum_{m\in \mathbb{Z}}\phi\lr{\frac{m}{3M}}\sum_{n_1\sim N}\sum_{n_2\sim N}\Phi_a(mn_1,R)\Phi_a(mn_2,R)=\frac{3\widehat{\phi}(0)^3MN^2R^2}{q^2}+O_{\delta}\lr{\frac{R^{4-\eta}}{M}}.$$
\end{lem}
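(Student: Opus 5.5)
Write $\Phi_a(n,R)=\sum_{r\equiv an \ (\bmod q)}\phi(r/R)$. Each product $\Phi_a(mn_1,R)\Phi_a(mn_2,R)$ is then a sum over pairs $(r_1,r_2)$ with $r_i\equiv amn_i \ (\bmod q)$. Since $0\le\phi\le 1$ is supported in $[1/4,3/4]$, only $r_i\asymp R$ contribute. As $(a,q)=1$, the conditions become $mn_i\equiv r_i\overline{a}\ (\bmod q)$.

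The plan is to interchange summations and treat $m$ as the outer variable. For fixed $m$, the pair $(n_1,n_2)$ contributes when $n_1$ and $n_2$ both lie in the progression $\{n\sim N: mn\equiv r\overline a \ (\bmod q)$ for some $r\ll R\}$. Alternatively, one can eliminate $m$. From $r_2 mn_1\equiv r_1 m n_2$ and $(m,q)=1$ (which we may assume after splitting on $d=(m,q)$), we get $r_2n_1\equiv r_1n_2\ (\bmod q)$. I would rather follow the Heath-Brown--Jia / Irving lattice approach suggested by the definition of $\lambda(m)$, as follows.

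\textbf{Step 1 (Poisson).} Apply Poisson summation in $n_1$ and $n_2$, with a smooth majorant/minorant replacing $n\sim N$, or after first smoothing the $n$-sums. This expresses the sum in terms of frequencies $(k_1,k_2)$: the $k_1=k_2=0$ term gives $\frac{3\widehat\phi(0)^3MN^2R^2}{q^2}$, exactly as in Lemma \ref{thm2_type1}. The remaining terms involve counting $(j,k)$ with $jq+k\overline a\equiv 0\ (\bmod m)$, i.e. lattice points of $\lambda(m)$ in a box of size about $R\times q/N$. More concretely: for fixed $m$ the inner sum equals $\big(\sum_{n\sim N}\Phi_a(mn,R)\big)^2$. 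Writing $n=(jq+r\overline a)/m$, the quantity $\sum_n\Phi_a(mn,R)$ counts points $(j,r)\in\lambda(m)$ with $r\asymp R$ and $(jq+r\overline a)/m\sim N$, which forces $|j|\ll MN/q+R$.

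\textbf{Step 2 (lattice count).} By standard geometry of numbers, the number of points of $\lambda(m)$ (determinant $m$) in a convex region of area $A$ equals $A/m+O(\text{perimeter}/R_1(m)+1)$. The region has area $\asymp NR m/q$ with sides of size $R$ and $MN/q$, so
\begin{align*}
\sum_{n\sim N}\Phi_a(mn,R)=\frac{\widehat\phi(0)NR}{q}+O\Big(\frac{R+MN/q}{R_1(m)}+1\Big).
\end{align*}
Squaring and summing over $m$ with weight $\phi(m/3M)$ recovers the main term. The errors are bounded by
\begin{align*}
\frac{NR}{q}\sum_m\Big(\frac{R+MN/q}{R_1(m)}+1\Big)\quad\text{and}\quad \sum_m\frac{(R+MN/q)^2}{R_1(m)^2}.
\end{align*}

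\textbf{Step 3 (average of $R_1(m)$).} This is the main obstacle. We need that short vectors are rare, i.e. bounds for $\#\{m\sim M: R_1(m)\le L\}$. A vector $(j,k)$ of length $\le L$ lies in $\lambda(m)$ iff $m\mid jq+k\overline a$. Since $jq+k\overline a\neq0$ for small nonzero $(j,k)$ (as $|k|<q$ unless $L$ is large), counting gives
\begin{align*}
\#\{m\sim M:R_1(m)\le L\}\ll \sum_{|j|,|k|\le L}\tau(|jq+k\overline a|)\ll L^2 (qL)^{\varepsilon}.
\end{align*}
Dyadically summing over $L$ then bounds $\sum_m R_1(m)^{-1}$ and $\sum_m R_1(m)^{-2}$ by $M^{\varepsilon}$ times suitable powers. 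One also uses the trivial bound $R_1(m)\ll\sqrt m$ for the generic $m$.

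\textbf{Step 4 (parameter check).} Verify, using $X=qR$, $MN\asymp X$ and $M,N\le R^{2-\delta}$, that each error is $O(R^{4-\eta}/M)$ for $\eta<\delta/4$. I expect the $(MN/q)^2\sum_m R_1(m)^{-2}$ and $NR/q\cdot M$ terms to be the binding constraints; these are where the restriction $\eta<\delta/4$ should arise.
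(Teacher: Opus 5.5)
Your proposal is correct and follows essentially the paper's route. Both arguments rest on the lattice-point asymptotic $\sum_{n\sim N}\Phi_a(mn,R)=\widehat{\phi}(0)NR/q+O(R/R_1(m))$ for $\lambda(m)$, which the paper imports from Irving's Lemma 4.2, and on the average $\sum_{m\asymp M}R_1(m)^{-2}\ll R^{\varepsilon}$ (Irving's Lemma 4.7), which your divisor-bound count of short vectors reproduces. The only difference is in the cross term: the paper controls it by Cauchy--Schwarz, while you bound $\sum_{m\asymp M}R_1(m)^{-1}\ll M^{1/2+\varepsilon}$ directly.

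Two small corrections. The Poisson detour in Step 1 is unnecessary, and its ``$R\times q/N$'' box should be $R\times MN/q$, as you then correctly derive. Also, the binding error is the cross term $\frac{NR}{q}\sum_m R/R_1(m)\ll R^{3+\varepsilon}M^{-1/2}\le R^{4-\delta/2+\varepsilon}/M$, not the terms you singled out; this still gives any $\eta<\delta/2$.
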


Irving has established the above result in \cite[Theorem 4.8]{Irv2014}. However, for completeness, we provide a brief proof below.

\begin{proof}
    For any $m\asymp M$, by \cite[Lemma 4.2]{Irv2014}, we have
    \begin{align*}
        \sum_{n\sim N}\Phi_a(mn, R)=\dfrac{\widehat{\phi}(0)NR}{q} + O\bigg(\dfrac{R}{R_1(m)}\bigg).
    \end{align*}
    Therefore, we have
    \begin{align*}
        \sum_{m\in \mathbb{Z}} &\phi\lr{\frac{m}{3M}}\sum_{n_1\sim N}\sum_{n_2\sim N}\Phi_a(mn_1,R)\Phi_a(mn_2,R)\\
        &=\dfrac{\widehat{\phi}(0)NR}{q}\sum_{m\in \mathbb{Z}}\phi\bigg(\dfrac{m}{3M}\bigg)\sum_{n\sim N}\Phi_a(mn, R) + O\bigg(R\sum_{m\in \mathbb{Z}}\phi(m/3M)R_1(m)^{-1}\sum_{n\sim N}\Phi_a(mn, R)\bigg).
    \end{align*}
    Invoking Lemma \ref{thm2_type1} together with the fact that $MN\asymp X=qR$, we see that for any $0<\eta<\delta/2$, the first term in the right-hand side of the above expression is 
    \begin{align*}
       = \dfrac{3\widehat{\phi}(0)^3MN^2R^2}{q^2} + O_{\delta}\bigg(\dfrac{R^{4-\eta}}{M}\bigg).
    \end{align*}
    Next, by the Cauchy-Schwarz inequality, we have
    \begin{align*}
        \bigg|R\sum_{m\in \mathbb{Z}}\phi(m/3M)R_1(m)^{-1}\sum_{n\sim N}\Phi_a(mn, R)\bigg|^2\ll \bigg(\sum_{m\asymp M}\bigg(\sum_{n\sim N}\Phi_a(mn, R)\bigg)^2\bigg)\bigg(\sum_{m\asymp M}\dfrac{R^2}{R_1(m)^2}\bigg).
    \end{align*}
    We may now apply \cite[Lemmas 4.6, 4.7]{Irv 2014} to conclude for any $\varepsilon>0$, the right-hand side of the above expression is $\ll_{\varepsilon} (NR^3/q)\cdot R^{2+\varepsilon}$, so that
    \begin{align*}
      R\sum_{m\in \mathbb{Z}}\phi(m/3M)R_1(m)^{-1}\sum_{n\sim N}\Phi_a(mn, R) \ll_{\varepsilon} \dfrac{N^{1/2}R^{5/2+\varepsilon/2}}{q^{1/2}}\ll_{\varepsilon}\dfrac{M^{1/2}R^{3+\varepsilon/2}}{M}\ll \dfrac{R^{4-\delta/2+\varepsilon/2}}{M},
    \end{align*}
    using the facts that $X=qR$, $MN\asymp X$ and $M\ll R^{2-\delta}$. Choosing $\varepsilon=\delta/4$ completes the proof of the lemma.
\end{proof}

        \section{Type II sums}\label{type2_sums}

        The goal of this section is to establish the following Type II estimate.

        \begin{prop}\label{Prop: Main 2}
Let $M, N\geq 2$ be two real numbers such that 
\begin{align}\label{con_range1}
    X/4\leq MN\leq 4X\quad \text{and} \quad \dfrac{q}{R^{1-\delta}}\leq N\leq R^{12/11-\delta}
\end{align} 
for some $\delta>0$. Let $Y=(\log X)^{C(\delta)}$ for some large $C(\delta)>0$. Then, for any $0<\eta<\delta/20$, we have
\begin{align*}
    \sum_{m\sim M}1_{S_q(Y)}(m)\sum_{n\sim N}\big(1_{S_q(Y)}(n)-K(N,Y)\big)\Phi_a(mn, R)\ll_{\delta} R^{2-\eta},
\end{align*}
where 
\begin{align}\label{def_K(x,y)}
    K(x,y):=\frac{1}{x}\sum_{n\sim x}1_{S_q(y)}(n).
\end{align}
    \end{prop}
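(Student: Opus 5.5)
We argue by the dispersion method, following the sketch in Section \ref{sec: Proof of main theorem}. Write $\beta(n):=1_{S_q(Y)}(n)-K(N,Y)$ for $n\sim N$. Since $0\le 1_{S_q(Y)}(m)\le \phi(m/3M)$ for $m\sim M$ (as $\phi=1$ on $[1/3,2/3]$), Cauchy--Schwarz gives
\begin{align*}
\Big|\sum_{m\sim M}1_{S_q(Y)}(m)\sum_{n\sim N}\beta(n)\Phi_a(mn,R)\Big|^2\ll M\sum_{m\in\mathbb{Z}}\phi\Big(\frac{m}{3M}\Big)\Big|\sum_{n\sim N}\beta(n)\Phi_a(mn,R)\Big|^2 ,
\end{align*}
so it suffices to show that the dispersion $\mathcal{D}:=\sum_m\phi(m/3M)|\sum_n\beta(n)\Phi_a(mn,R)|^2$ is $\ll R^{4-2\eta}/M$. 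Expanding the square, $\mathcal{D}=D(1_S,1_S)-2K\,D(1_S,1)+K^2D(1,1)$, where $D(f,g)=\sum_{n_1,n_2\sim N}f(n_1)g(n_2)\sum_m\phi(m/3M)\Phi_a(mn_1,R)\Phi_a(mn_2,R)$. The term $D(1,1)$ is given by Lemma \ref{thm3_type1} as $3\widehat\phi(0)^3MN^2R^2/q^2+O(R^{4-\eta}/M)$; note $M,N\le R^{2-\delta}$ by \eqref{A2=>A1}. The goal is to show that for $f=1_{S_q(Y)}$ and $g\in\{1,1_{S_q(Y)}\}$,
\begin{align*}
D(f,g)=\frac{3\widehat\phi(0)^3MR^2}{q^2}\Big(\sum_{n\sim N}f(n)\Big)\Big(\sum_{n\sim N}g(n)\Big)+O\Big(\frac{R^{4-2\eta}}{M}\Big).
\end{align*}
Then the three main terms cancel exactly because $\sum_{n\sim N}1_{S_q(Y)}(n)=NK(N,Y)$.

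To evaluate $D(f,g)$ we split according to $d=(n_1,n_2)$. Pairs with $d>D_0=R^{\kappa}$ (small $\kappa$) are handled trivially: bound $f,g\le1$ and use the lattice estimates behind Lemma \ref{thm3_type1} with $mn_i$ replaced by $(md)n_i'$, which gains a factor $d^{-1}$. Pairs with $d\le D_0$ are reduced to coprime pairs with modified lengths; smoothness is multiplicative, so $f(dn_1')=f(n_1')$ whenever $P^+(d)\le Y$ and $(d,q)=1$. For coprime $n_1,n_2$ we apply Poisson summation three times, as in the sketch: twice in the form $\Phi_a(n,R)=\frac Rq\sum_k\widehat\phi(kR/q)e(ank/q)$, and once in $m$. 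The frequencies with $k_1n_1+k_2n_2=0$ give the main term: $k_1=k_2=0$ yields $3\widehat\phi(0)^3MR^2/q^2$ per pair, and the terms $k_1=\ell n_2,\,k_2=-\ell n_1$ with $\ell\neq0$ are negligible since $N\ge q/R^{1-\delta}$ forces $|k_i|R/q\ge R^{\delta}$. For $c=k_1n_1+k_2n_2\ne0$ the $m$-sum localises to $c\equiv -\bar a u\pmod q$ with $|u|\ll q^{1+\epsilon}/M$ and $|k_i|\ll q^{1+\epsilon}/R$. We then solve the linear equation in $(k_1,k_2)$ by a further Poisson summation modulo $n_2$. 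The $w=0$ term reproduces a smooth function of $n_1,n_2$ whose total contribution is small, or absorbable by Type I arguments. The terms with $w\neq0$, where $|w|\ll N^{1+\epsilon}R/q$, leave
\begin{align*}
\frac{MR^2}{q^2}\cdot\frac{q}{R}\cdot\frac1N\sum_{u,v,w}\ \sum_{n_2\sim N}\Big|\sum_{\substack{n_1\sim N\\(n_1,n_2)=1}}1_{S_q(Y)}(n_1)\,e\Big(\frac{cw\,\overline{n_1}}{n_2}\Big)\Big|,
\end{align*}
up to $R^{\epsilon}$ factors, with $|cw|\ll N^{2+\epsilon}$. Here we have placed the absolute value so that $g$ is absorbed into the outer sum, and we have removed the smooth weights in $n_1$ by partial summation.

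The main obstacle is bounding this inner sum, and we would do it with Theorem \ref{thm_kloos}, taking $M=x=N$ and $|a|=|cw|\ll N^{2+\epsilon}$. With $y=Y$ and $z=N^{2/3}$, the terms $Nx^{1/2}z^{1/2}$ and $x^{3/2}N^{1/2}z^{-1/4}$ balance, giving the bound $N^{2-1/6+\epsilon}(\log N)^{C/2}$ (cf. Remark \ref{Remark: Consequence of Kloosterman sum}). Multiplying by the number of $(u,v,w)$ and checking against $R^{4-2\eta}/M$, the exponents reduce, using $X=qR$ and $MN\asymp X$, to a condition of the form $N^{11/6}\ll R^{2-\text{small}}$. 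This is precisely $N\le R^{12/11-\delta}$, and it yields the admissible range $\eta<\delta/20$ once $\epsilon$ is chosen in terms of $\delta$. The delicate bookkeeping lies in the precise box sizes for $u,v,w$, the gcd reduction, and the size of $c$. The lower condition $N\ge q/R^{1-\delta}$ is what kills the degenerate frequencies. Assembling these bounds gives $\mathcal{D}\ll R^{4-2\eta}/M$, and hence the proposition.
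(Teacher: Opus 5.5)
Your outline follows the paper's proof essentially step for step. The shared steps are:
\begin{itemize}
\item Cauchy--Schwarz with the majorant $\phi(m/3M)$.
\item The expansion into $S_1'-2S_2'+S_3'$, with the $f=g=1$ term taken from Lemma~\ref{thm3_type1}.
\item The split according to $d=(n_1,n_2)$ at $D_0$.
\item Three Poisson summations, with the main term coming from $k_1n_1+k_2n_2=0$.
\item The parametrisation $c=vq-a''u$, followed by Poisson summation modulo $n_2$.
\item Truncation to a box in $(u,v,w)$ and partial summation in $n_1$.
\item Theorem~\ref{thm_kloos} with $z=N^{2/3}$, which gives exactly $N^{11/6}\ll R^{2-o(1)}$, i.e.\ $N\le R^{12/11-\delta}$.
\end{itemize}

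The one step that does not hold up as written is the $w=0$ term, which you dismiss as ``small, or absorbable by Type I arguments''. It is not small for trivial reasons. Fix $d<D_0$ and $n_1,n_2\sim N/d$. Then $\widehat g(0;n_1,n_2,c)\ll q/R$, and it is negligible unless $|c|\ll R^{o(1)}Nq/(dR)$. There are about $q/M$ relevant values of $u$, and for each about $N/(dR)\ (\geq 1)$ values of $v$. Inserting this with the prefactor $MR^2/(q^2n_2)$ and summing over $n_1,n_2,d$ gives a trivial bound of order $N^2$, up to $R^{o(1)}$. The target is $R^{4-2\eta}/M\asymp R^{3-2\eta}N/q$, so you would need $N\ll R^3/q$. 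This contradicts $N\ge q/R^{1-\delta}$, because $q\ge R^2$ (this is where $\theta>1/3$ enters). So the trivial bound here even exceeds the main term $\asymp R^3N/q$. You need genuine cancellation inside the $(u,v)$-sum for each fixed pair $(n_1,n_2)$, and equidistribution information about $1_{S_q(Y)}$ cannot supply it.

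The missing ingredient is an exact identity, which is how the paper handles this term. Add back the terms with $c=0$ and sum over $v$ first. By Poisson summation,
\[
\sum_{v\in\mathbb{Z}}\widehat\phi\Big(\frac{(vq-a''u-tn_1)R}{n_2q}\Big)=\frac{n_2}{R}\sum_{r\in\mathbb{Z}}\phi\Big(\frac{n_2r}{R}\Big)e\Big(\frac{a''ur+tn_1r}{q}\Big),
\]
and every term on the right vanishes:
\begin{itemize}
\item the term $r=0$ vanishes because $\phi(0)=0$ (the support of $\phi$ is $[1/4,3/4]$);
\item the terms $r\neq 0$ vanish because $n_2>N/D_0>R$, which again uses $N\ge q/R^{1-\delta}$ and $q\ge R^2$.
\end{itemize}
Hence the complete $w=0$ sum over all $(u,v)$ is identically zero. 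The $w=0$, $c\neq0$ contribution is therefore exactly minus the $c=0$ contribution. When $c=0$ one has $q\mid u$, and the weight $\widehat\phi(3Mu/q)$ leaves only $u=0$ up to a negligible error. The bound $\widehat g(0;n_1,n_2,0)\ll q/R$ then gives
\[
\ll \frac{MR^2}{q^2}\cdot\frac{q}{R}\sum_{d<D_0}\frac{N}{d}\ll R^2\log D_0,
\]
which is admissible since $M\le R^{2-\delta}$. With this step supplied, your argument coincides with the paper's.
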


    The proof of Proposition ~\ref{Prop: Main 2} will be divided into several steps. For brevity, we denote
\begin{align*}
    S:=\sum_{m\sim M}1_{S_q(Y)}(m)\sum_{n\sim N}\big(1_{S_q(Y)}(n)-K(N,Y)\big)\Phi_a(mn, R),
\end{align*}
where $K(x,y)$ given by \eqref{def_K(x,y)}. The goal is to show that $S\ll_\delta R^{2-\eta}$ for some $\eta>0$ depending on $\delta$. Throughout this section, we assume that $Y=(\log X)^{C(\delta)}$ for some large $C(\delta)>0$.

\subsection{Analysis of the sum $S$ via dispersion method}
By the Cauchy-Schwarz inequality and the definition of $\phi$, we have
\begin{align*}
S^2
&\ll M\sum_{m}\phi\bigg(\dfrac{m}{3M}\bigg)\bigg|\sum_{n\sim N}\big(1_{S_q(Y)}(n)-K(N, Y)\big)\Phi_a(mn, R)\bigg|^2\\
    &= M(S'_1-2S'_2 +S'_3),
\end{align*}
where
\begin{align}\label{def_S11}
    S'_{1}=\sum_{n_1\sim N}\sum_{n_2\sim N}1_{S_q(Y)}(n_1)1_{S_q(Y)}(n_2)\sum_{m}\phi\lr{\frac{m}{3M}}\p{mn_1,R}\p{mn_2,R},
\end{align}
\begin{align}\label{def_S12}
    S'_{2}=K(N,Y)\sum_{n_1\sim N}\sum_{n_2\sim N}1_{S_q(Y)}(n_1)\sum_{m}\phi\lr{\frac{m}{3M}}\p{mn_1,R}\p{mn_2,R},
\end{align} and 
\begin{align}\label{def_S13}
    S'_{3}=K^2(N,Y)\sum_{n_1\sim N}\sum_{n_2\sim N}\sum_{m}\phi\lr{\frac{m}{3M}}\p{mn_1,R}\p{mn_2,R}.
\end{align}
Next, for notational convenience, we write
\begin{align}\label{def_S'_new}
     S':=S'_{1}-2S'_{2}+S'_{3}.
\end{align}
Thus, to prove Proposition~\ref{Prop: Main 2}, it suffices to establish the following result.

\begin{prop}\label{thm2_type2}
    Let $S'$ be as in \eqref{def_S'_new}. Suppose that $M,N\geq 2$ satisfy the relation \eqref{con_range1}. Then, for any $0<\eta<\delta/20$, we have 
    \begin{align*}
        S'\ll_{\delta}\frac{R^{4-2\eta}}{M}.
    \end{align*}
\end{prop}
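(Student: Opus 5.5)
We prove the proposition by showing that each of $S'_1$, $S'_2$, $S'_3$ equals the same main term
\[
\mathcal{M}:=\frac{3\widehat{\phi}(0)^3MR^2}{q^2}\bigg(\sum_{n\sim N}1_{S_q(Y)}(n)\bigg)^2 = \frac{3\widehat{\phi}(0)^3MN^2R^2}{q^2}K(N,Y)^2,
\]
up to an error $O_\delta(R^{4-2\eta}/M)$. Then $S'=S'_1-2S'_2+S'_3\ll R^{4-2\eta}/M$, since the main terms cancel with multiplicities $1-2+1=0$.

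\textbf{The term $S'_3$.} This follows at once from Lemma \ref{thm3_type1}, multiplied by $K(N,Y)^2\le 1$. The hypotheses $M,N\le R^{2-\delta}$ hold by \eqref{A2=>A1} and \eqref{con_range1}, and Lemma \ref{thm3_type1} permits any error exponent below $\delta/4$, in particular $2\eta<\delta/10$.

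\textbf{The terms $S'_1$ and $S'_2$.} We treat these together. Write $f(n_1)=1_{S_q(Y)}(n_1)$ and $g\in\{1,1_{S_q(Y)}\}$, and follow the sketch in Section \ref{sec: Proof of main theorem}.

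\begin{enumerate}
\item Separate $d=(n_1,n_2)$. The diagonal and large-$d$ contributions are bounded trivially: $n_1=n_2$ contributes $\ll MN\cdot R/q$ after averaging $\Phi_a$, which is acceptable because $N\ge q/R^{1-\delta}$.
\item Expand both factors $\Phi_a(mn_i,R)$ by Poisson summation in $k_i$, truncating to $|k_i|\le qR^{-1+\eta'}$. Then apply Poisson summation in $m$ with the weight $\phi(m/3M)$.
\item The terms with $k_1n_1+k_2n_2=0$ (for coprime $n_1,n_2$ this forces $k_1=k_2=0$, up to negligible tails) give exactly $\mathcal{M}$, with $g$ summed against $f$. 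This is the only place where smoothness enters the main term.
\item For $c=k_1n_1+k_2n_2\neq0$, the dual variable in $m$ forces $c\equiv -\bar a' u \pmod q$ with $|u|\ll qM^{-1+\eta'}$.
\item Detect the linear equation in $(k_1,k_2)$ by Poisson summation modulo $n_2$, which produces the phases $e(c\overline{n_1}w/n_2)$. The $w=0$ term is handled using the decay of $\widehat\phi$. The terms with $w\ne0$ are bounded with $g$ replaced by absolute values, using only that $|g|\le1$. This leaves sums
\[
\frac{MR^2}{q^2}\cdot\frac{1}{N}\sum_{u,v,w}\ \sum_{n_2\sim N}\bigg|\sum_{\substack{n_1\sim N\\(n_1,n_2)=1}}1_{S_q(Y)}(n_1)e\Big(\frac{cw\overline{n_1}}{n_2}\Big)\bigg|,
\]
with $|cw|\ll N^{2}R^{O(\eta')}$.
\end{enumerate}

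\textbf{The inner sum.} We bound it by Theorem \ref{thm_kloos} with $x=M=N$ and $a=cw$. Take $z=N^{2/3}$ to balance $N^{3/2}z^{1/2}$ against $N^2z^{-1/4}$; this gives $\ll N^{11/6+\epsilon}$. The factor $(1+|a|/N^2)^{1/2}$ is harmless, and $Y=(\log X)^{C}$ costs only $N^{o(1)}$.

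\textbf{The main obstacle.} The counting step must show that this saving of $N^{1/6}$ beats the number of $(u,v,w)$ boxes. The box sizes are roughly
\[
|u|\ll q/M,\qquad |k_i|\ll q/R,\qquad |w|\ll N^2R/(qN)\cdot R^{\eta'}.
\]
Multiplying out, the total is $\ll (MR^2/q^2)\,N^{11/6}\,(q/M)(NR/q)R^{O(\eta')}$, and this must be at most $R^{4-2\eta}/M$. Using $MN\asymp qR$ and $q=R^{(1+\theta)/(1-\theta)}$ with $\theta=6/17$, so that $q=R^{23/11}$, this reduces to a condition of the form $N\le R^{12/11-\delta}$. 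This is exactly where the range in \eqref{con_range1} comes from; the exponent would be checked carefully, with $\eta<\delta/20$ absorbing the $R^{O(\eta')}$ losses.

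The non-coprime pairs $(n_1,n_2)$ are treated by the same argument after extracting the gcd $d$. Summing over $d$ costs only $X^{\epsilon}$, and large $d$ is bounded trivially.

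Combining the three estimates gives $S'_i=\mathcal{M}+O(R^{4-2\eta}/M)$ for $i=1,2,3$, and hence the proposition.
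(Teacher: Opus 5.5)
Your outline has the same architecture as the paper: Lemma \ref{lem_S3'_new} for $S'_3$, and Proposition \ref{Prop: combine S1 and S2} with $\beta=1_{S_q(Y)}$ and $\beta=1$ for $S'_1,S'_2$. That in turn goes through the gcd split, triple Poisson summation, the substitution $c=vq-a''u$, Poisson modulo $n_2$, and Theorem \ref{thm_kloos} with $z=N^{2/3}$. However, your step ``the $w=0$ term is handled using the decay of $\widehat\phi$'' fails. For $w=0$ and fixed $u,t$, the argument $(vq-a''u-tn_1)R/(n_2q)$ moves in steps of $R/n_2<1$. So about $N/R$ values of $v$ contribute, and $\widehat g(0;n_1,n_2,c)$ is typically of size $q/R$. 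Absolute values and decay therefore only give
\[
\frac{MR^2}{q^2}\cdot N\cdot\frac{q}{M}\cdot\frac{N}{R}\cdot\frac{q}{R}\asymp N^2 .
\]
The bound $N^2\le R^{4-2\eta}/M$ would need $NqR\le R^{4-2\eta}$. This is false, because $q>R^2$ (as $\theta>1/3$) and $N\ge q/R^{1-\delta}>R$.

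What is missing is an exact cancellation. Add back the $c=0$ terms; then for fixed $u$ and $t$, Poisson summation in $v$ gives
\[
\sum_{v\in\mathbb{Z}}\widehat\phi\Big(\frac{(vq-a''u-tn_1)R}{n_2q}\Big)=\frac{n_2}{R}\sum_{r\in\mathbb{Z}}\phi\Big(\frac{n_2r}{R}\Big)e\Big(\frac{(a''u+tn_1)r}{q}\Big)=0 .
\]
This vanishes because $\phi$ is zero outside $[1/4,3/4]$ and $n_2/R>1$, which uses $d<D_0$ and $N\ge q/R^{1-\delta}$. Hence the $w=0$ part equals minus the $c=0$ part. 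There $u=qk$ and $v=a''k$, and only $k=0$ survives. The bound $\widehat g(0;n_1,n_2,0)\ll q/R$ then gives $\ll MNR\log D_0/q\asymp R^2\log D_0$; see Lemmas \ref{lem_S5'_new} and \ref{lem_S5''_new}.

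Two smaller corrections.

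First, the large-$d$ blocks cannot be bounded ``trivially'' in the sense of your diagonal estimate. Bounding one factor $\Phi_a$ by $1$ gives $\sum_{d\ge D_0}MN^2R/(d^2q)$, which is too large by a factor $q/R$. You need a second-moment bound for each block, as in Lemma \ref{lem_S2_new}: replace $a$ by $a'\equiv da\pmod q$ and use the geometry-of-numbers estimate of Lemma \ref{thm3_type1}. This gives $\ll MN^2R^2/(d^2q^2)$, so $D_0=R^{2\eta}$ suffices.

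Second, your final count drops three factors: the weight $1/n_2\asymp d/N$, the $v$-range $|v|\ll N/R^{1-4\eta}$, and the size $\widehat g\ll q/R$. The last must first be removed from the $n_1$-sum by partial summation, using $f'(x)\ll qdR^{\eta}/(NR)$ for $f(x)=\widehat g(w/n_2;x,n_2,c)$. As written, your product $R^3N^{17/6}/q^2$ would give the constraint $N^{11/6}\le q$, not the $12/11$ range. The correct total is $R^{1+O(\eta)}N^{17/6}/q$. With $MN\asymp qR$, the requirement $RN^{17/6}/q\le R^{4-2\eta}/M$ becomes $N^{11/6}\le R^{2-2\eta}$. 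After absorbing the $R^{O(\eta)}$ losses, this is the range $N\le R^{12/11-\delta}$.
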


We begin by estimating the sum $S_3^\prime$.

\begin{lem}\label{lem_S3'_new}
   Let $S'_{3}$ be as in \eqref{def_S13}. Suppose that $M,N\geq 2$ satisfy the relation \eqref{con_range1}. Then, for any $0<\eta<\delta/20$, we have
    \begin{align*}
        S'_{3}=\frac{3\widehat{\phi}(0)^3K^2(N,Y)MN^2R^2}{q^2}+O_{\delta}\lr{\frac{K^2(N,Y)R^{4-2\eta}}{M}}.
    \end{align*}
\end{lem}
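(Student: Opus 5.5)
This lemma follows almost directly from Lemma \ref{thm3_type1}, since $S'_3$ is just $K^2(N,Y)$ times the sum treated there. First I will pull out the constant factor: by \eqref{def_S13},
\begin{align*}
S'_3=K^2(N,Y)\sum_{m}\phi\Big(\frac{m}{3M}\Big)\sum_{n_1\sim N}\sum_{n_2\sim N}\Phi_a(mn_1,R)\Phi_a(mn_2,R),
\end{align*}
so it suffices to evaluate the triple sum on the right.

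Next I will check the hypotheses of Lemma \ref{thm3_type1}. The condition $X/4\le MN\le 4X$ is part of \eqref{con_range1}. For the size conditions, \eqref{con_range1} gives $N\le R^{12/11-\delta}\le R^{2-\delta}$. For $M$, I will use \eqref{A2=>A1}: since $N\ge q/R^{1-\delta}$ and $X=qR$, we get $M\le 4X/N\le 4R^{2-\delta}$. The harmless constant $4$ can be absorbed by replacing $\delta$ with a slightly smaller value, say $\delta/2$. Equivalently, one can note that the proof of Lemma \ref{thm3_type1} only uses $M,N\ll R^{2-\delta}$. So Lemma \ref{thm3_type1} applies with some admissible exponent $\eta'$ with $0<\eta'<\delta/4$ (or $\delta/8$ after shrinking $\delta$).

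Given $0<\eta<\delta/20$, I then take $\eta'=2\eta$. This is admissible, because $2\eta<\delta/10<\delta/8$. Lemma \ref{thm3_type1} then gives the triple sum as
\begin{align*}
\frac{3\widehat{\phi}(0)^3MN^2R^2}{q^2}+O_\delta\Big(\frac{R^{4-2\eta}}{M}\Big).
\end{align*}
Multiplying by $K^2(N,Y)$ yields exactly the claimed asymptotic for $S'_3$.

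There is no real obstacle here. The only point needing care is the bookkeeping of the ranges, namely deriving $M\ll R^{2-\delta}$ from the lower bound on $N$, and checking that $2\eta$ falls in the admissible range for $\eta'$. Both follow immediately from the constraint $\eta<\delta/20$.
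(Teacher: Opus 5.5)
Your proposal is correct and follows the paper's own proof: factor $K^2(N,Y)$ out of $S'_3$ and apply Lemma \ref{thm3_type1} with exponent $2\eta<\delta/10<\delta/4$, after checking that \eqref{con_range1} gives $M,N\ll R^{2-\delta}$. Your treatment of the constant in $M\le 4R^{2-\delta}$ and your explicit choice $\eta'=2\eta$ spell out bookkeeping that the paper leaves implicit.
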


\begin{proof}
    The proof follows from Lemma \ref{thm3_type1} by noting that \eqref{con_range1} implies $M, N\leq R^{2-\delta}$.
\end{proof}

Next, to estimate the sums $S'_1$ and $S'_2$, we will prove the following general result for any $1$-bounded sequence. To be precise, let  $\boldsymbol{\beta}=(\beta (n))_{n\geq 1}$ be a complex-valued sequence with $|\beta(n)|\leq 1$. Define
\begin{align}\label{def_S_beta}
        S(\boldsymbol{\beta}):=\sum_{n_1\sim N}1_{S_q(Y)}(n_1)\sum_{n_2\sim N}\beta(n_2)\sum_m \phi\bigg(\dfrac{m}{3M}\bigg)\Phi_a(mn_1, R)\Phi_a(mn_2, R).
    \end{align}

\begin{prop}\label{Prop: combine S1 and S2}
    Suppose that  $M,N\geq 2$ satisfy the relation \eqref{con_range1}. Let  $\boldsymbol{\beta}=(\beta (n))_{n\geq 1}$ be a complex-valued sequence with $|\beta(n)|\leq 1$. Then,  for any $0<\eta<\delta/20$, we have
    \begin{align*}
        S(\boldsymbol{\beta})=\dfrac{3\widehat{\phi}(0)^3K(N, Y)MNR^2}{q^2}\sum_{n\sim N}\beta(n) + O_\delta\bigg(\dfrac{R^{4-2\eta}}{M}\bigg).
    \end{align*}
\end{prop}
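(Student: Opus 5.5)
Following the sketch in Section~\ref{sec: Proof of main theorem}, I will expand $\Phi_a(mn_1,R)\Phi_a(mn_2,R)$ by Poisson summation and isolate a main term and two error terms. Writing
\[
\Phi_a(mn_i,R)=\frac{R}{q}\sum_{k_i}\widehat{\phi}\Big(\frac{k_iR}{q}\Big)e\Big(\frac{amn_ik_i}{q}\Big)
\]
and applying Poisson in $m$, the inner sum becomes
\[
\frac{3MR^2}{q^2}\sum_{k_1,k_2}\widehat{\phi}\Big(\frac{k_1R}{q}\Big)\widehat{\phi}\Big(\frac{k_2R}{q}\Big)\sum_{\ell}\widehat{\phi}\Big(3M\Big(\ell-\frac{a(k_1n_1+k_2n_2)}{q}\Big)\Big).
\]
By the decay of $\widehat{\phi}$, we may truncate to $|k_i|\le qR^{-1+\eta'}$, and we only need $\|a(k_1n_1+k_2n_2)/q\|\le R^{\eta'}/M$. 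First, I will strip off the pairs with $(n_1,n_2)=d>1$. For large $d$ these are handled trivially using $\Phi_a\le1$ together with the bound for a single $n$ from Lemma~\ref{thm2_type1}-type estimates. For small $d$, I factor out $d$ and treat the remaining pairs like the coprime case. Since the $n_i$ are smooth, I will keep the weight $1_{S_q(Y)}(n_1)$ throughout.

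\textbf{Diagonal terms.} The terms with $k_1n_1+k_2n_2=0$ include $k_1=k_2=0$, which gives
\[
\frac{3\widehat{\phi}(0)^3MR^2}{q^2}\Big(\sum_{n_1}1_{S_q(Y)}(n_1)\Big)\Big(\sum_{n_2}\beta(n_2)\Big),
\]
and this is exactly the stated main term, since $\sum_{n_1\sim N}1_{S_q(Y)}(n_1)=NK(N,Y)$. The other diagonal solutions, with $(n_1,n_2)=1$, force $n_2\mid k_1$ and $n_1\mid k_2$. Hence $|k_i|\ge N\ge qR^{-1+\delta}$, which lies outside the truncation range, so these terms are negligible. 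The terms $\ell\ne0$ with $c=0$ are killed by $\widehat{\phi}(3M\ell)$.

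\textbf{Off-diagonal terms.} For $c=k_1n_1+k_2n_2\ne0$, I will parametrize $ac\equiv -u\pmod q$ with $|u|\ll qR^{\eta'}/M$, that is, $c=vq-\overline{a}u$. This gives $|c|\ll qNR^{\eta'-1}$, so the number of admissible $c$ is about $(qN/(RM))R^{\eta'}\cdot$(number of $v$). For fixed $n_1,n_2,c$, I will count solutions $(k_1,k_2)$ by writing $k_2\equiv c\overline{n_2}\pmod{n_1}$. Poisson summation in the arithmetic progression gives
\[
\frac{1}{n_1}\sum_w \widehat g(w/n_1)\,e\big(c\overline{n_2}w/n_1\big).
\]
I will arrange the roles so that the variable carrying $1_{S_q(Y)}$ is the one being inverted. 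The $w=0$ term reproduces a smooth count, and it cancels against the main-term expectation, or is small after summing over $c$; it has to be matched carefully to the corresponding contribution in Lemma~\ref{thm3_type1}. The terms with $w\ne0$ are restricted to $|w|\ll N^{1+\eta'}R/q$. Taking absolute values over $n_2$, $c$ and $w$, and summing over the smooth $n_1$, leaves sums of the form $\sum_{n_2\sim N}\big|\sum_{n_1} 1_{S_q(Y)}(n_1)\,e(b\overline{n_1}/n_2)\big|$ with $|b|=|cw|\ll N^{2+\varepsilon}$. Each such sum is $\mathrm{Kl}_Y(N,2N;b,q)$.

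\textbf{Main obstacle.} The key step is bounding these Kloosterman sums with Theorem~\ref{thm_kloos}, taking $M=x=N$, $|a|\ll N^2$, $y=Y$, and $z=N^{2/3}$ (optimizing $N^{3/2}z^{1/2}$ against $N^2z^{-1/4}$). This gives $\ll N^{11/6+\varepsilon}$, a saving of $N^{1/6}$. It then remains to check that the total off-diagonal contribution, roughly
\[
\frac{MR^2}{q^2}\cdot\frac{1}{N}\cdot\#\{c\}\cdot\#\{w\}\cdot N^{11/6},
\]
is $\ll R^{4-2\eta}/M$. The bookkeeping in this check is where the constraint $N\le R^{12/11-\delta}$ should come from, which is how the paper's Lemma~\ref{lem_for_S8} is presumably set up, while $N\ge q/R^{1-\delta}$ is what kills the degenerate diagonal. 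I expect the tight part to be tracking the gcd conditions and the coprimality to $q$ so that all the factors are accounted for.
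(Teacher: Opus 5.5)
Your architecture matches the paper's: Poisson in $k_1,k_2,m$; main term from $k_1=k_2=0$; $c=vq-\overline{a}u$; Poisson along $k_1n_1+k_2n_2=c$; and Theorem~\ref{thm_kloos} with $z=N^{2/3}$ for $w\neq 0$, which is where $N\le R^{12/11-\delta}$ comes from. However, your treatment of $d=(n_1,n_2)$ fails as stated. Bounding one factor by $\Phi_a\le 1$ forfeits a factor $q/R$ against its average size $R/q$. For each $d$ you get $\asymp MN^2R/(d^2q)$, while the target is $R^{4-2\eta}/M\asymp MN^2R^{2-2\eta}/q^2$, so this disposes only of $d\ge qR^{-1+2\eta}$. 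Since $q/R=R^{12/11-O(\varepsilon)}$ and $N\ge qR^{-1+\delta}$, the remaining ``small'' $d$ have $N/d$ as small as $R^{\delta-2\eta}$, and there your coprime analysis breaks down. The diagonal terms $k_1=hn_2$, $k_2=-hn_1$ with $h\neq0$ are negligible only when $N/d\ge qR^{-1+\eta'}$, and the $w=0$ argument below needs $N/d>R$. The missing input is the second-moment bound of Lemma~\ref{thm3_type1} (Irving's geometry-of-numbers estimate), applied with $a$ replaced by $a'\equiv da\pmod q$. It gives $\ll MN^2R^2/(d^2q^2)$ for each $d$, which lets the paper take $D_0=R^{2\eta}$. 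Only $d<R^{2\eta}$ then enter the Fourier analysis, and the main term is re-extended to all $d$ at cost $\sum_{d\ge D_0}MN^2R^2/(d^2q^2)$.

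Second, the $w=0$ term cannot ``cancel against the main-term expectation''. The proposition is an asymptotic for each fixed $\boldsymbol{\beta}$, and you have already (correctly) accounted for the entire main term with $k_1=k_2=0$. Estimated trivially, with $\widehat g(0)\ll q/R$, about $q/M$ values of $u$ and about $N/R$ values of $v$, the $w=0$ term has size about $N^2$. This exceeds $R^4/M\asymp R^3N/q$ by a factor $qN/R^3\approx R^{2/11}$. The paper's mechanism is exact vanishing:
\begin{itemize}
\item Add back the $c=0$ terms so that the $(u,v)$-sum is complete.
\item Apply Poisson in $v$ to get $\frac{n_2}{R}\sum_{r}\phi(n_2r/R)\,e\big((a''u+tn_1)r/q\big)$.
\item Observe that this is identically zero, because $n_2> N/D_0>R$ and $\phi$ is supported on $[1/4,3/4]$. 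This is where $N\ge q/R^{1-\delta}$ and $\theta>1/3$ are used, not only for the diagonal.
\item Bound the subtracted $c=0$ piece trivially by $\ll R^2\log D_0\ll_\delta R^{4-2\eta}/M$, using $M\ll R^{2-\delta}$.
\end{itemize}
A smaller point: $\widehat g(w/n_2;n_1,n_2,c)$ depends on $n_1$. Before taking absolute values and invoking Theorem~\ref{thm_kloos}, you must remove it by partial summation in $n_1$; the paper uses $\widehat g\ll q/R$ and $\partial_{n_1}\widehat g\ll_\eta qdR^{\eta}/(NR)$.
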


Throughout the remainder of the section, our goal is to establish Proposition \ref{Prop: combine S1 and S2}. Before that, we quickly demonstrate how to deduce Proposition \ref{thm2_type2} from Proposition \ref{Prop: combine S1 and S2}.

\begin{proof}[Proof of Proposition \ref{thm2_type2} assuming Proposition \ref{Prop: combine S1 and S2}]
Applying Proposition \ref{Prop: combine S1 and S2} with $\beta(n)=1_{S_q(Y)}(n)$ and $\beta(n)=1$, we have that
\begin{align*}
    S'_1=\frac{3\widehat{\phi}(0)^3K^2(N,Y)MN^2R^2}{q^2}+O_{\delta}\lr{\frac{R^{4-2\eta}}{M}}
\end{align*}
and
\begin{align*}
     S'_2=\frac{3\widehat{\phi}(0)^3K^2(N,Y)MN^2R^2}{q^2}+O_{\delta}\lr{\frac{K(N,Y)R^{4-2\eta}}{M}},
\end{align*}
respectively. Finally, invoking Lemma~\ref{lem_S3'_new} and noting that $K(N, Y)\leq 1$, the relation \eqref{def_S'_new} implies that
\begin{align*}
    S'=O_{\delta}\lr{\frac{R^{4-2\eta}}{M}},
\end{align*}
as desired.
\end{proof}

Therefore, our goal reduces to establishing Proposition \ref{Prop: combine S1 and S2}.

\subsection{Initial analysis of the sum $S(\boldsymbol{\beta})$} We begin by noting that, if  $M, N\geq 2$ satisfy the relation \eqref{con_range1}, then 
\begin{align*}
    M\ll \dfrac{X}{N}\ll \dfrac{qR}{q/R^{1-\delta}}\ll R^{2-\delta},
\end{align*}
using the fact that $X=qR$. In particular, we can apply the estimates from Section~\ref{type1_sums}, which we shall do throughout this section. 

We begin by dividing the sum $S(\boldsymbol{\beta})$ into two parts according to the greatest common divisor of $n_1$ and $n_2$. More precisely, for some $1\leq D_0\leq 2N$, we write
\begin{align}\label{S_1'=S1+S2}
    S(\boldsymbol{\beta})=\sum_{1\leq d<D_0}S(\boldsymbol{\beta}, d) + \sum_{D_0\leq d\leq 2N}S(\boldsymbol{\beta}, d) :=S_1(\boldsymbol{\beta}) + S_2(\boldsymbol{\beta}),
\end{align}
say. Here and throughout this section,
\begin{align}\label{def_S2(d)}
    S(\boldsymbol{\beta}, d):=\sum_{\st{n_1, n_2\sim N/d\\(n_1,n_2)=1}}1_{S_q(Y)}(dn_1)\beta(dn_2)\sum_{m}\phi\lr{\frac{m}{3M}}\p{mdn_1,R}\p{mdn_2,R}.
\end{align}

We take 
\begin{align}\label{def_D0}
    D_0=R^{2\eta},
\end{align}
where $0<\eta<\delta/20$.

We begin by estimating the contributions from the sum $S_2(\boldsymbol{\beta})$ in the following lemma.

\begin{lem}\label{lem_S2_new}
    Let $S_2(\boldsymbol{\beta})$ be as given in \eqref{S_1'=S1+S2}. Suppose that the condition \eqref{con_range1} holds. Then, for any $0<\eta<\delta/20$, we have
    $$S_2(\boldsymbol{\beta})\ll_{\delta} \frac{R^{4-2\eta}}{M}.$$
\end{lem}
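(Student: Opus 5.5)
Since all terms are nonnegative (as $\phi\ge 0$, $\Phi_a\ge 0$) except for $\beta$, I will first bound trivially using $|1_{S_q(Y)}(dn_1)\beta(dn_2)|\le 1$ and positivity, dropping the coprimality condition. This reduces the claim to
\begin{align*}
S_2(\boldsymbol{\beta})\ll \sum_{D_0\le d\le 2N}\sum_{m}\phi\Big(\frac{m}{3M}\Big)\bigg(\sum_{n\sim N/d}\Phi_a(mdn,R)\bigg)^2 .
\end{align*}
I then regroup by $m'=md$. For fixed $m'\asymp Md$, we have $\sum_{n\sim N/d}\Phi_a(m'n,R)\le \sum_{n\le 2N/d}\Phi_a(m'n,R)$, and I will bound this sum uniformly in $m'$ via the lattice $\lambda(m')$.

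Concretely, $\Phi_a(m'n,R)\neq 0$ requires $m'n\equiv r\overline{a}\Mod q$ with $|r|\le R$. That is, the pair $(r,n)$ with $n\ll N/d$ lies in a lattice of determinant $q$ (namely the lattice $\{(r,n): r\overline{a}\equiv m'n \Mod q\}$). The number of such points in a box of sides $R\times N/d$ is therefore $\ll 1+\frac{RN}{dq}+\frac{\max(R,N/d)}{\text{shortest vector}}$.

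The cleaner route is to avoid the shortest-vector issue altogether. Using $\Phi_a\le 1$ to bound one of the two inner sums trivially by $N/d$, we get
\begin{align*}
S_2(\boldsymbol{\beta})\ll \sum_{D_0\le d\le 2N}\frac{N}{d}\sum_m\phi\Big(\frac{m}{3M}\Big)\sum_{n\sim N/d}\Phi_a(mdn,R).
\end{align*}
Writing $k=mdn$ with $k\asymp MN\asymp X$, and noting that $k$ has $\ll X^{\epsilon}$ representations, this is
\begin{align*}
\ll X^{\epsilon}\sum_{D_0\le d\le 2N}\frac{N}{d}\sum_{k\asymp X}\Phi_a(k,R)\ll X^{\epsilon}\,\frac{N}{D_0}\cdot\frac{XR}{q}\log N = X^{\epsilon}\frac{NR^2}{D_0}.
\end{align*}
Here $\sum_{k\asymp X}\Phi_a(k,R)\ll XR/q+R$ holds trivially, since $k$ ranges over $\ll R$ residue classes with multiplicity $X/q+1$.

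However, the divisor factor $X^\epsilon$ and the weight $N/d$ need care. Since $k\asymp X$ determines $d\mid k$ and $mn=k/d$, I will instead keep $d$ outside. For fixed $d$, the sum $\sum_{m,n}\Phi_a(mdn,R)$ over $l=mn$ has multiplicity $\tau(l)$. I will handle this via Lemma \ref{thm2_type1}-type Vinogradov averaging, applied with $m$ replaced by $md$ in a range $\asymp Md$, which gives $\sum_m\phi\sum_n\Phi_a(mdn,R)\ll MNR/(dq)+R^{2-\eta'}$. The needed size condition $Md, N/d\le R^{2-\delta}$ requires checking; if $Md$ is too large, I fall back to the divisor bound $\tau\ll R^{\epsilon}$.

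Either way, the target is $\frac{N}{d}\cdot\frac{MNR}{dq}$ summed over $d\ge D_0$, which is $\ll \frac{MN^2R}{D_0 q}$. Since $MN\asymp qR$, this equals $\frac{NR^2}{D_0}$. I then need $\frac{NR^2}{D_0}\le \frac{R^{4-2\eta}}{M}$, i.e.\ $MN\ll R^{2-2\eta}D_0 = R^2$ when $D_0=R^{2\eta}$. But $MN\asymp qR$ and $q\ge R$, so this fails; the trivial bound $\Phi_a\le1$ on one factor is too lossy.

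So the main obstacle is the second factor: I must use the lattice count $\sum_{n\sim N/d}\Phi_a(m'n,R)\ll \frac{NR}{dq}+\frac{R}{R_1(m')}+1$ (Irving's Lemma 4.2 style) for both factors. Squaring gives $\ll \frac{N^2R^2}{d^2q^2}+\frac{R^2}{R_1(m')^2}+1$, and I sum over $m\asymp M$ using Irving's Lemmas 4.6 and 4.7, with $\sum_{m'\asymp Md}R^2/R_1(m')^2\ll R^{2+\epsilon}$. The first term gives $\sum_d M N^2R^2/(d^2q^2)\ll MN^2R^2/(D_0q^2)=R^4/(D_0M)$, which is acceptable. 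The second gives $\sum_d R^{2+\epsilon}\ll NR^{2+\epsilon}$, and $NR^{2}\le R^{4-2\eta}/M$ needs $MN\le R^{2-3\eta}$, which again fails. Therefore I will treat the cross term by Cauchy–Schwarz as in Lemma \ref{thm3_type1}, and handle the $+1$ term through the restriction $n\ge1$ forcing $m'\le$ appropriate. This is the step I expect to be delicate.
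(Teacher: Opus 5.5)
Your proposal stops short of a proof. After rightly discarding the bound $\Phi_a\le 1$, you reach
\[
|S_2(\boldsymbol{\beta})|\ll\sum_{D_0\le d\le 2N}\sum_{m}\phi\Big(\frac{m}{3M}\Big)A_{md}^2,\qquad A_{m'}:=\sum_{n\sim N/d}\Phi_a(m'n,R)\ll\frac{NR}{dq}+\frac{R}{R_1(m')}+1 .
\]
You then find that the $R^2/R_1^2$ term and the $+1$ term look too large, and defer the rest to a ``delicate'' step. The remedy you suggest does not address either failure. Cauchy--Schwarz as in Lemma~\ref{thm3_type1} still needs a bound for $\sum_d\sum_m R^2/R_1(md)^2$, and your per-$d$ estimate only bounds this by $NR^{2+\epsilon}$. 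The $+1$ term contributes $\asymp MN\asymp qR$, which exceeds $R^{4-2\eta}/M$. The phrase ``$n\ge1$ forcing $m'\le$ appropriate'' is not an argument.

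Both problems are bookkeeping, and your $m'=md$ route closes once they are fixed. First, since $\lambda(m')$ depends only on $m'$, sum over $(m,d)$ jointly rather than bounding $\sum_m R^2/R_1(md)^2$ separately for each $d$; bounding per $d$ is what costs the factor $N$. If $R_1(m')\le L$, then $m'$ divides $jq+k\overline{a}$ for some $0<\|(j,k)\|\le L$. This integer is nonzero because $L\ll\sqrt{X}<q$ (Minkowski) and $(\overline{a},q)=1$. The divisor bound then gives $\#\{m'\ll X: R_1(m')\le L\}\ll L^2X^{\epsilon}$, uniformly in the size of $m'$. Hence, with $\tau$ the divisor function,
\[
\sum_{d\ge D_0}\sum_m\phi\Big(\frac{m}{3M}\Big)\frac{R^2}{R_1(md)^2}\le\sum_{m'\ll X}\tau(m')\frac{R^2}{R_1(m')^2}\ll R^2X^{2\epsilon}.
\]
Second, for the $+1$ term, $A\ll c_1+c_2+1$ implies $A^2\ll c_1^2+c_2^2+A$. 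Then
\[
\sum_{d,m}A_{md}\le\sum_{k\ll X}\#\{(m,d,n): mdn=k\}\,\Phi_a(k,R)\ll X^{\epsilon}(X/q+1)R\ll X^{\epsilon}R^2.
\]
Since $M\le R^{2-\delta}$, both bounds are $\ll R^{4-2\eta}/M$. Together with your main term $MN^2R^2/(q^2D_0)\ll R^{4-2\eta}/M$, this proves the lemma, without even needing $(d,q)=1$.

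The paper takes a different route. It uses that $1_{S_q(Y)}(dn_1)$ forces $(d,q)=1$, writes $\Phi_a(mdn,R)=\Phi_{a'}(mn,R)$ with $a'\equiv ad\Mod q$, and quotes Lemma~\ref{thm3_type1} with $N/d$ in place of $N$, summing only the main terms $MN^2R^2/(d^2q^2)$. As written, this step drops the $d$-independent error $O(R^{4-\eta}/M)$ of that lemma, which is in any case stated only for $MN\asymp X$. That error cannot be summed over $d\le 2N$. So your worry about errors accumulating in $d$ was justified. The repaired $m'=md$ argument, where all lattice errors are summed jointly over $m'$, is the more robust way to obtain the bound.
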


\begin{proof} By definition, we have
    \begin{align*}
        S_2(\boldsymbol{\beta})=\sum_{D_0\leq d\leq 2N}\sum_{\st{n_1, n_2\sim N/d\\(n_1,n_2)=1}}1_{S_q(Y)}(dn_1)\beta(dn_2)\sum_{m}\phi\lr{\frac{m}{3M}}\p{mdn_1,R}\p{mdn_2,R}.
    \end{align*}
    Note that the above sum contributes only when $(d, q)=1$ as $dn_1$ is supported on integers coprime to $q$. Since $(a, q)=1$ and $d\leq 2N\leq R^{2-\delta}<q$, we have that $da\equiv a'\Mod q$ for some $(a', q)=1$ and $1\leq a'\leq q$. So, by definition \eqref{def_Phi} of $\Phi$ and Remark \ref{rem_phi}, we have $\Phi(mdn_i, R)=\Phi_{a'}(mn_i, R)$ for $i\in \{1, 2\}$. So, we may now apply Lemma \ref{thm3_type1} to obtain
    \begin{align*}
        S_2(\boldsymbol{\beta}) &\leq \sum_{D_0\leq d\leq 2N}\sum_{n_1, n_2\sim N/d}\sum_{m}\phi\bigg(\dfrac{m}{3M}\bigg)\Phi_{a'}(mn_1, R)\Phi_{a'}(mn_2, R)\\
        &\ll \sum_{D_0\leq d\leq 2N}\dfrac{MN^2R^2}{d^2q^2}\ll\dfrac{MN^2R^2}{q^2D_0}\ll \dfrac{R^{4-2\eta}}{M}, 
    \end{align*}
    using the facts that $MN\asymp X=qR$ and $D_0=R^{2\eta}$. This completes the proof.
\end{proof}

So, it remains to evaluate the sum $S_1(\boldsymbol{\beta})$, which constitutes the remaining bulk of this section.

\subsection{Fourier analysis on the sum $S_1(\boldsymbol{\beta})$} 

Recall that
\begin{align*}
    S_1(\boldsymbol{\beta})=\sum_{1\leq d<D_0}\sum_{\st{n_1, n_2\sim N/d\\(n_1,\: n_2)=1}}1_{S_q(Y)}(dn_1)\beta(dn_2)\sum_{m}\phi\lr{\frac{m}{3M}}\p{mdn_1,R}\p{mdn_2,R}.
\end{align*}
We will apply Fourier analysis in order to estimate the above sum. We write
\begin{align*}
    S_1(\boldsymbol{\beta})=\sum_{1\leq d<D_0}\sum_{\st{n_1, n_2\sim N/d\\(n_1,n_2)=1}}1_{S_q(Y)}(dn_1)\beta(dn_2)F(n_1, n_2; d),
\end{align*}
where
\begin{align*}
    F(n_1, n_2, d):=\sum_{m}\phi\lr{\frac{m}{3M}}\p{mdn_1,R}\p{mdn_2,R}.
\end{align*}
By the definition of $\Phi$ given by \eqref{def_Phi} and then applying the Poisson summation formula three times (see, for example, Lemma \cite[Lemma 5.4]{Irv2014}), we may express the above sum $F(n_1, n_2, d)$ as
\begin{align*}
    F(n_1,n_2,d)=\frac{3MR^2}{q^2}\sum_{k_1}\sum_{k_2}\widehat{\phi}\lr{\frac{k_1R}{q}}\widehat{\phi}\lr{\frac{k_2R}{q}}\sum_{m}\widehat{\phi}\lr{3Mm-\frac{3Mad(k_1n_1+k_2n_2)}{q}}.
\end{align*}
The next step is to analyse $F(n_1, n_2, d)$ in two parts, depending on whether $k_1n_1+k_2n_2=0$ or not. For ease of notation, here and throughout this section, we write
\begin{align}\label{def_F1}
    F_1(n_1,n_2,d):=\frac{3MR^2}{q^2}\sum_{\st{k_1,k_2\\k_1n_1+k_2n_2=0}}\widehat{\phi}\lr{\frac{k_1R}{q}}\widehat{\phi}\lr{\frac{k_2R}{q}}\sum_{m}\widehat{\phi}\lr{3Mm}
\end{align}
and 
\begin{align}\label{def_F2}
    F_2(n_1,n_2,d):=\frac{3MR^2}{q^2}\sum_{\st{k_1,k_2\\k_1n_1+k_2n_2\neq 0}}\widehat{\phi}\lr{\frac{k_1R}{q}}\widehat{\phi}\lr{\frac{k_2R}{q}}\sum_{m}\widehat{\phi}\lr{3Mm-\frac{3Mad(k_1n_1+k_2n_2)}{q}}.
\end{align}
Furthermore, we write
\begin{align}\label{def2_S1}
    S_1(\boldsymbol{\beta})=S_3(\boldsymbol{\beta})+S_4(\boldsymbol{\beta}),
\end{align}
where 
\begin{align}\label{def_S3_new}
    S_3(\boldsymbol{\beta})=\sum_{d<D_0}\sum_{\st{n_1, n_2\sim N/d\\(n_1,n_2)=1}}1_{S_q(Y)}(dn_1)\beta(dn_2)F_1(n_1,n_2,d)
\end{align}
and 
\begin{align}\label{def_S4_new}
    S_4(\boldsymbol{\beta})=\sum_{d<D_0}\sum_{\st{n_1, n_2\sim N/d\\(n_1,n_2)=1}}1_{S_q(Y)}(dn_1)\beta(dn_2)F_2(n_1,n_2,d).
\end{align}

\subsection{Estimation of the sum $S_3(\boldsymbol{\beta})$}
We begin by evaluating the sum $S_3(\boldsymbol{\beta})$.

\begin{lem}\label{lem_S3_new}
    Suppose that the condition \eqref{con_range1} holds. Let  $\boldsymbol{\beta}=(\beta (n))_{n\geq 1}$ be a complex-valued sequence with $|\beta(n)|\leq 1$. Let $S_3(\boldsymbol{\beta})$ be as in \eqref{def_S3_new}.  Then, for any $0<\eta<\delta/20$, we have
    \begin{align*}
        S_3(\boldsymbol{\beta})=\dfrac{3\widehat{\phi}(0)^3K(N, Y)MNR^2}{q^2}\sum_{n\sim N}\beta(n) + O_\delta\bigg(\dfrac{R^{4-2\eta}}{M}\bigg).
        \end{align*}
    \end{lem}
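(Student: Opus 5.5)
We first compute $F_1(n_1,n_2,d)$ explicitly. Since $(n_1,n_2)=1$, the solutions of $k_1n_1+k_2n_2=0$ are exactly $k_1=tn_2$, $k_2=-tn_1$ with $t\in\mathbb{Z}$. Because $M\geq 2$ and $|\widehat{\phi}(x)|\ll_A |x|^{-A}$, we have $\sum_m\widehat{\phi}(3Mm)=\widehat{\phi}(0)+O(M^{-A})$; equivalently, by Poisson summation this sum equals $(3M)^{-1}\sum_j\phi(j/3M)$. For the $k$-sum, the term $t=0$ gives $\widehat{\phi}(0)^2$. For $t\neq 0$ the arguments satisfy $|tn_2R/q|\geq n_2R/q\gg NR/(dq)\geq R^{\delta-2\eta}$, using $N\geq q/R^{1-\delta}$ and $d<D_0=R^{2\eta}$ with $\eta<\delta/20$. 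The rapid decay of $\widehat{\phi}$ therefore makes these terms $O_A(R^{-A})$. Hence
\[
F_1(n_1,n_2,d)=\frac{3\widehat{\phi}(0)^3MR^2}{q^2}+O\Big(\frac{MR^2}{q^2}R^{-A}\Big),
\]
uniformly in $n_1,n_2$ and $d<D_0$. The error, summed trivially over $d,n_1,n_2$, contributes $\ll MN^2R^{2-A}/q^2$. This is negligible compared with $R^{4-2\eta}/M$, since $MN^2R^2/q^2\asymp N R^4/(Mq)\cdot$ and $N\ll R^{2}$ gives $MN^2R^2/q^2 \ll R^{4+O(1)}/M$.

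It therefore remains to evaluate
\[
\frac{3\widehat{\phi}(0)^3MR^2}{q^2}\sum_{d<D_0}\sum_{\substack{n_1,n_2\sim N/d\\(n_1,n_2)=1}}1_{S_q(Y)}(dn_1)\beta(dn_2).
\]
We add back the terms with $d\geq D_0$. Writing $n_i'=dn_i$, the full sum over $d\leq 2N$ is exactly $\sum_{n_1'\sim N}1_{S_q(Y)}(n_1')\sum_{n_2'\sim N}\beta(n_2')=NK(N,Y)\sum_{n\sim N}\beta(n)$. The terms with $d\geq D_0$ are bounded trivially by $\sum_{d\geq D_0}N^2/d^2\ll N^2/D_0$. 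Multiplying by the prefactor gives the error
\[
\frac{MN^2R^2}{q^2D_0}\ll\frac{R^{4-2\eta}}{M},
\]
exactly as in Lemma \ref{lem_S2_new}, using $MN\asymp qR$. The main term is then $3\widehat{\phi}(0)^3K(N,Y)MNR^2q^{-2}\sum_{n\sim N}\beta(n)$, as claimed.

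The only step needing care is the uniform negligibility of the $t\neq0$ terms. This relies on $NR/(dq)$ being a positive power of $R$ for every $d<D_0$, which is where the lower bound $N\geq q/R^{1-\delta}$ from \eqref{con_range1} and the choice $D_0=R^{2\eta}$ with $2\eta<\delta$ enter. The sum over $t$ then converges by the decay of $\widehat{\phi}$, since $\sum_{t\neq0}|tT|^{-A}\ll T^{-A}$ for $T=NR/(dq)\gg R^{\delta/2}$. Everything else is bookkeeping.
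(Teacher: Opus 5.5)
Your argument is correct and is essentially the paper's proof: you parametrize the solutions as $k_1=tn_2$, $k_2=-tn_1$, and you discard the terms with $t\neq 0$ and $m\neq 0$ by the decay of $\widehat{\phi}$, using $NR/(dq)\gg R^{\delta-2\eta}$. You then complete the $d$-sum to $d\le 2N$ at a cost of $MN^2R^2/(q^2D_0)\ll R^{4-2\eta}/M$, exactly as the paper does.

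Two small slips. First, the $m\neq 0$ terms are $O_A(M^{-A})$, and this is $O(R^{-A'})$ because \eqref{con_range1} together with $MN\asymp qR$ gives $M\gg R^{\delta}$ (as the paper notes), not merely because $M\geq 2$. Second, the identity you need in the trivial error bound is simply $MN^2R^2/q^2\asymp R^4/M$, not $NR^4/(Mq)$.
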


    \begin{proof}
        First observe that if $(n_1, n_2)=1$ and $k_1n_1+k_2n_2=0$, then we may write $k_1=hn_2$ and $k_2=-hn_1$ for some integer $h$. So, by \eqref{def_S3_new} and \eqref{def_F1}, we can express the sum $S_3(\boldsymbol{\beta})$ as
        \begin{align*}
            S_3(\boldsymbol{\beta})=\dfrac{3MR^2}{q^2}\sum_{d<D_0}\sum_{\st{n_1, n_2\sim N/d\\(n_1,n_2)=1}}1_{S_q(Y)}(dn_1)\beta(dn_2)\sum_{h}\widehat{\phi}\bigg(\dfrac{-n_1hR}{q}\bigg)\widehat{\phi}\bigg(\dfrac{n_2hR}{q}\bigg)\sum_{m}\widehat{\phi}(3Mm).
        \end{align*}
    Next, we note that the condition \eqref{con_range1} implies that 
    \begin{align*}
        \dfrac{n_1R}{q}, \dfrac{n_2R}{q}\gg \dfrac{NR}{dq}\gg R^{\delta-2\eta}  \quad \text{and} \quad M \gg \dfrac{X}{N}\gg \dfrac{qR}{R^{2-\delta}}\gg R^\delta.
    \end{align*}
    Since $\eta<\delta/20$, using the fact that $|\widehat{\phi}(x)|\ll_{B}\min\{1, |x|^{-B}\}$ for any positive integer $B$, we can bound the contributions of the terms with $h\neq 0$ or $m\neq 0$ in $S_3(\boldsymbol{\beta})$ by $\ll R^{-2026}$. So, we can deduce that
    \begin{align}\label{Eq: S3 beta}
         S_3(\boldsymbol{\beta})=\frac{3\widehat{\phi}(0)^3MR^2}{q^2}\sum_{1\leq d< D_0}\sum_{\st{n_1, n_2\sim N/d\\(n_1,n_2)=1}}1_{S_q(Y)}(dn_1)\beta(dn_2)+O(R^{-2026}).
    \end{align}
    Observe that
    \begin{align*}
        \dfrac{MR^2}{q^2}\sum_{D_0\le d\leq 2N}\sum_{\st{n_1, n_2\sim N/d\\(n_1,n_2)=1}}1_{S_q(Y)}(dn_1)\beta(dn_2)\ll  \dfrac{MR^2}{q^2}\sum_{D_0\le d\leq 2N} \dfrac{N^2}{d^2}\ll_{\delta} \dfrac{R^{4-2\eta}}{M},
    \end{align*}
    using the facts that $MN\asymp X=qR$ and $D_0=R^{2\eta}$. So, we may extend the sum over $d$ in \eqref{Eq: S3 beta} to the full interval $1\leq d\leq 2N$ to conclude that 
    \begin{align*}
        S_3(\boldsymbol{\beta})=\frac{3\widehat{\phi}(0)^3MR^2}{q^2}\sum_{n_1, n_2\sim N}1_{S_q(Y)}(n_1)\beta(n_2) + O_\delta\bigg(\dfrac{R^{4-2\eta}}{M}\bigg).
    \end{align*}
    Recalling that $K(N, Y)=N^{-1}\sum_{n\sim N}1_{S_q(Y)}(n)$ completes the proof.
    \end{proof}

\subsection{Initial analysis of the sum $S_4(\boldsymbol{\beta})$}
We now focus on the sum $S_4(\boldsymbol{\beta})$ given by
 \[S_4(\boldsymbol{\beta})=\sum_{1\leq d<D_0}\sum_{\st{n_1, n_2\sim N/d\\(n_1,n_2)=1}}1_{S_q(Y)}(dn_1)\beta(dn_2)F_2(n_1,n_2,d),\]
 where $F_2(n_1, n_2, d)$ is given by \eqref{def_F2}. 

 First, we note that for any given $d, m, k_1, k_2$, there exists a unique integer $u$ such that
 \begin{align*}
     m-\dfrac{ad(k_1n_1 + k_2n_2)}{q}=\dfrac{u}{q}.
 \end{align*}
So, we must have a unique integer $v$ such that
\begin{align}\label{def_c}
    k_1n_1+k_2n_2=vq-a''u=c,
\end{align}
say. Here $a''\in [1, q]$ is an integer such that $(da)a'' \equiv 1\Mod q$. Note that $a''$ exists as $(da, q)=1$. So, we may express $F_2(n_1, n_2, d)$ as
\begin{align}\label{F2}
    F_2(n_1, n_2, d)=\dfrac{3MR^2}{q^2}\sum_{\substack{k_1, k_2, u, v \in \mathbb{Z}\\ k_1n_1+k_2n_2=c\neq 0}}\widehat{\phi}\bigg(\dfrac{k_1R}{q}\bigg)\widehat{\phi}\bigg(\dfrac{k_2R}{q}\bigg)\widehat{\phi}\bigg(\dfrac{3Mu}{q}\bigg).
\end{align}
Next, if $(n_1, n_2)=1$, we may apply \cite[Lemma 5.6]{Irv2014} to obtain
\begin{align*}
    \sum_{\substack{k_1, k_2 \in \mathbb{Z}\\ k_1n_1+k_2n_2=c\neq 0}}\widehat{\phi}\bigg(\dfrac{k_1R}{q}\bigg)\widehat{\phi}\bigg(\dfrac{k_2R}{q}\bigg)=\dfrac{1}{n_2}\sum_{w\in \mathbb{Z}}\widehat{g}(w/n_2)e\bigg(\dfrac{c\overline{n_1}w}{n_2}\bigg),
\end{align*}
where $\overline{n_1}$ denote the inverse of $n_1$ modulo $n_2$ and for any real number $t$,
\begin{align}\label{def_g}
       g(t):= g(t;n_1, n_2, c)=\widehat{\phi}\lr{\frac{tR}{q}}\widehat{\phi}\lr{\frac{(c-tn_1)R}{n_2q}}.
    \end{align}
So, from the above discussion, we can infer that
\begin{align*}
    S_4(\boldsymbol{\beta})=\dfrac{3MR^2}{q^2}\sum_{1\leq d<D_0}\sum_{\substack{n_1, n_2\sim N/d\\ (n_1, n_2)=1}}1_{S_q(Y)}(dn_1)\beta(dn_2) \cdot \dfrac{1}{n_2}\sum_{\substack{u,v, w\in \mathbb{Z}\\ c\neq 0}}\widehat{\phi}\bigg(\dfrac{3Mu}{q}\bigg)\widehat{g}\bigg(\dfrac{w}{n_2}\bigg)e\bigg(\dfrac{c\overline{n_1}w}{n_2}\bigg).
\end{align*}

Next, we divide $S_4(\boldsymbol{\beta})$ into two parts depending on whether $w=0$ or $w\neq 0$ in the inner sum. Indeed, we write
\begin{align}\label{S4=S5+S6}
    S_4(\boldsymbol{\beta})=S_5(\boldsymbol{\beta})+S_6(\boldsymbol{\beta}),
\end{align}
where
\begin{align}\label{def_S5_new}
   S_5(\boldsymbol{\beta})= \frac{3MR^2}{q^2}\sum_{d< D_0}\sum_{\st{n_1,n_2\sim N/d\\(n_1, n_2)=1}}\frac{1_{S_q(Y)}(dn_1)\beta(dn_2)}{n_2}\sum_{\st{u,v \in \mathbb{Z}\\c\neq 0}}\widehat{\phi}\lr{\frac{3Mu}{q}}\widehat{g}\lr{0;n_1, n_2, c}
\end{align}
and 
\begin{align}\label{def_S6_new}
    S_6(\boldsymbol{\beta})=\frac{3MR^2}{q^2}\sum_{d< D_0}\sum_{\st{n_1,n_2\sim N/d\\(n_1, n_2)=1}}\frac{1_{S_q(Y)}(dn_1)\beta(dn_2)}{n_2}\hspace*{-0.3cm}\sum_{\st{u,v, w \in \mathbb{Z}\\ c, w\neq 0}}\hspace*{-0.3cm}\widehat{\phi}\lr{\frac{3Mu}{q}}\widehat{g}\lr{w/n_2;n_1, n_2, c}e\lr{\frac{c\overline{n_1} w}{n_2}}.
\end{align}

\subsection{Evaluation of the sum $S_5(\boldsymbol{\beta})$}
To estimate the sum $S_5(\boldsymbol{\beta})$, first we add back the term $c=0$ and then subtract it. Recall from \eqref{def_c}, we have $vq-a''u=c$. So, if $c=0$, then we must have $u=qk$ and $v=a''k$ for some integer $k$ as by our choice $(a'', q)=1$. This allows us to write
\begin{align}\label{def2_S5}
    S_5(\boldsymbol{\beta})=S_5'(\boldsymbol{\beta})-S_5''(\boldsymbol{\beta}),
\end{align}
where
\begin{align}\label{def_S5'_new}
    S_5'(\boldsymbol{\beta})=\frac{3MR^2}{q^2}\sum_{d< D_0}\sum_{\st{n_1,n_2\sim N/d\\(n_1, n_2)=1}}\frac{1_{S_q(Y)}(dn_1)\beta(dn_2)}{n_2}\sum_{\st{u,v\in \mathbb{Z}}}\widehat{\phi}\lr{\frac{3Mu}{q}}\widehat{g}\lr{0;n_1, n_2, c}
\end{align}
and 
\begin{align}\label{def_5''_new}
    S_5''(\boldsymbol{\beta})=  \frac{3MR^2}{q^2}\sum_{d< D_0}\sum_{\st{n_1,n_2\sim N/d\\(n_1, n_2)=1}}\frac{1_{S_q(Y)}(dn_1)\beta(dn_2)}{n_2}\widehat{g}\lr{0;n_1, n_2, 0}\sum_{\st{k\in \mathbb{Z}}}\widehat{\phi}\lr{3Mk}.
\end{align}

We first show that the sum $S_5'(\boldsymbol{\beta})=0$.

\begin{lem}\label{lem_S5'_new}
    Let $S_5'(\boldsymbol{\beta})$ be as in \eqref{def_S5'_new}. Suppose that the condition \eqref{con_range1} holds. Then, we have $S_5'(\boldsymbol{\beta})=0.$
\end{lem}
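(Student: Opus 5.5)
The plan is to show that, for every fixed $d,n_1,n_2,u$, the inner sum over $v$ already vanishes. Then $S_5'(\boldsymbol{\beta})=0$ identically, with no cancellation needed in the other variables. Write
\[
G(c):=\widehat{g}(0;n_1,n_2,c)=\int_{-\infty}^{\infty}\widehat{\phi}\Big(\frac{tR}{q}\Big)\widehat{\phi}\Big(\frac{(c-tn_1)R}{n_2q}\Big)\,dt .
\]
This is a Schwartz function of the real variable $c$, by the decay of $\widehat{\phi}$. Since $c=vq-a''u$ by \eqref{def_c}, for fixed $u$ the sum over $v$ is $\sum_{v\in\mathbb{Z}}G(vq-a''u)$. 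Poisson summation in $v$ gives
\[
\sum_{v\in\mathbb{Z}}G(vq-a''u)=\frac1q\sum_{j\in\mathbb{Z}}\widehat{G}\Big(\frac jq\Big)e\Big(-\frac{ja''u}{q}\Big),
\]
up to the sign convention in the phase, which is irrelevant here.

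Next I compute $\widehat G$. The function $G$ is a convolution in $c$: substituting $s=c-tn_1$ writes $G$ as the convolution of $t\mapsto\widehat{\phi}(tR/q)$, rescaled by $n_1$, with $s\mapsto\widehat{\phi}(sR/(n_2q))$. Taking Fourier transforms and using $\widehat{\widehat{\phi}}(x)=\phi(-x)$, I expect
\[
\widehat G(\xi)=\frac{q}{R}\cdot\frac{n_2q}{R}\,\phi\Big(-\frac{\xi n_1q}{R}\Big)\phi\Big(-\frac{\xi n_2 q}{R}\Big)
\]
(possibly with harmless signs or constants). Only the support property of $\phi$ from Definition \ref{def_W} will be used: $\phi(x)=0$ unless $x\in[\tfrac14,\tfrac34]$.

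It remains to check that $\widehat G(j/q)=0$ for every integer $j$.
\begin{itemize}
\item For $j=0$ the factor is $\phi(0)=0$, since $0\notin[\tfrac14,\tfrac34]$.
\item For $j\neq0$ the argument of the first factor has absolute value $|j|n_1/R\ge n_1/R$. Since $n_1\sim N/d$ with $d<D_0=R^{2\eta}$, condition \eqref{con_range1} gives
\[
n_1\gg \frac{q}{R^{1-\delta+2\eta}} .
\]
From \eqref{def_R} we have $q=R^{(1+\theta)/(1-\theta)}\ge R^{2}$, and $\eta<\delta/20$. Hence $n_1/R\gg R^{\delta-2\eta}>1$ for large $q$, so the argument lies outside $[-\tfrac34,\tfrac34]$ and $\phi$ vanishes there.
\end{itemize}
Thus every term of the dual sum is zero, the sum over $v$ vanishes for each $u$, and so $S_5'(\boldsymbol{\beta})=0$.

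The main obstacle is the bookkeeping in computing $\widehat G$: getting the scalings $n_1q/R$ and $n_2q/R$ right, and confirming that the Poisson step is legitimate. The latter follows from the rapid decay of $G$ and $\widehat G$, since $\widehat{\phi}$ is Schwartz and $\phi$ is smooth and compactly supported. The vanishing argument itself needs only one factor with a large argument, so sign ambiguities do not affect the conclusion.
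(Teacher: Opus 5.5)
Your proposal is correct and is essentially the paper's argument. Both apply Poisson summation in $v$ and then observe that every dual term involves $\phi$ evaluated either at $0$ or at an argument of size at least $n_i/R>1$, where $\phi$ vanishes; this uses $q\ge R^2$ (from $\theta>1/3$) and $d<R^{2\eta}$. The only cosmetic difference is the order of operations. The paper applies Poisson inside the $t$-integral and gets the single factor $\phi(n_2r/R)$. You first integrate in $t$, compute $\widehat G(j/q)$ as a multiple of $\phi(-jn_1/R)\phi(-jn_2/R)$, and use the $n_1$ factor.
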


\begin{proof}
    By definition of $g$ from \eqref{def_g}, we have
    \begin{equation}
\begin{aligned}
    S_5'(\boldsymbol{\beta})= &\: \frac{3MR^2}{q^2}\sum_{d< D_0}\sum_{\st{n_1,n_2\sim N/d\\(n_1, n_2)=1}}\frac{1_{S_q(Y)}(dn_1)\beta(dn_2)}{n_2}\sum_{\st{ u\in \mathbb{Z}}}\widehat{\phi}\lr{\frac{3Mu}{q}}\\
    & \times \int_{-\infty}^{\infty}\widehat{\phi}\lr{\frac{tR}{q}}\sum_{v\in \mathbb{Z}}\widehat{\phi}\lr{\frac{(c-tn_1)R}{n_2q}}dt.
\end{aligned}
 \label{def2_S5'}
\end{equation}
Since $c=vq-a''u$, we can bound the sum over $v$ as
\begin{align*}
    \sum_{v\in \mathbb{Z}}\widehat{\phi}\lr{\frac{(c-tn_1)R}{n_2q}}=\sum_{v\in \mathbb{Z}}\widehat{\phi}\lr{\frac{(vq-a''u-tn_1)R}{n_2q}}=\dfrac{n_2}{R}\sum_{r\in \mathbb{Z}}\phi\bigg(\dfrac{n_2r}{R}\bigg)e\bigg(\dfrac{a''ur+tn_1r}{q}\bigg),
\end{align*}
where we have used the Poisson summation formula in the last step. Next, note that $n_2r/R=0$ if $r=0$ and by the condition \eqref{con_range1}, 
\begin{align*}
    \bigg|\dfrac{n_2r}{R}\bigg|\geq \dfrac{N}{dR}\geq \dfrac{N}{R^{1+2\eta}}\geq 1 \quad \text{if $r\neq 0$}.
\end{align*}
as $N\geq q/R^{1-\delta}$ and $\theta>1/3$. This implies that $\phi(n_2r/R)=0$ for all integers $r$ as $\phi$ is supported on $[1/4, 3/4]$. So, from \eqref{def2_S5'}, we can deduce that $S_5'(\boldsymbol{\beta})=0$.
\end{proof}

Now we turn our attention to estimating the sum $S_5''(\boldsymbol{\beta})$.

\begin{lem}\label{lem_S5''_new}
    Let $S_5''(\boldsymbol{\beta})$ be as in \eqref{def_5''_new}. Suppose that the condition \eqref{con_range1} holds. Then, for any $0<\eta<\delta/20$, we have
    \begin{align*}
        S_5''(\boldsymbol{\beta}) \ll_{\delta}\frac{R^{4-2\eta}}{M}.
    \end{align*}
\end{lem}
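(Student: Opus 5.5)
We bound $S_5''(\boldsymbol{\beta})$ trivially, using only the rapid decay of $\widehat{\phi}$ and the explicit form of $\widehat{g}(0;n_1,n_2,0)$.

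\textbf{Step 1: the sum over $k$.} By condition \eqref{con_range1} we have $M\gg R^{\delta}$. Since $|\widehat{\phi}(x)|\ll_B \min\{1,|x|^{-B}\}$, the terms with $k\neq 0$ contribute $O(R^{-B\delta})$, and therefore
\[
\sum_{k}\widehat{\phi}(3Mk)=\widehat{\phi}(0)+O(R^{-2026})\ll 1 .
\]

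\textbf{Step 2: the factor $\widehat{g}(0;n_1,n_2,0)$.} By \eqref{def_g},
\[
\widehat{g}(0;n_1,n_2,0)=\int_{-\infty}^{\infty}\widehat{\phi}\Big(\frac{tR}{q}\Big)\widehat{\phi}\Big(\frac{-tn_1R}{n_2q}\Big)\,dt .
\]
The first factor forces $|t|\lesssim q/R$ up to rapidly decaying tails. Because $n_1\asymp n_2$ (both lie in $(N/d,2N/d]$), the second factor imposes the same localisation. Bounding $|\widehat{\phi}|$ by $\min\{1,|x|^{-2}\}$ then gives
\[
|\widehat{g}(0;n_1,n_2,0)|\ll \frac{q}{R}.
\]
One could compute this integral exactly via Parseval as $\frac{q}{R}\int \phi(s)\phi(-sn_2/n_1)\,ds\cdot\frac{n_2}{n_1}$. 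Since $\phi$ is supported on $[1/4,3/4]$, this would even give zero, but the crude bound is all we need.

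\textbf{Step 3: summing trivially.} Inserting the bounds from Steps 1 and 2, with $1/n_2\ll d/N$,
\[
S_5''(\boldsymbol{\beta})\ll \frac{MR^2}{q^2}\sum_{d<D_0}\Big(\frac{N}{d}\Big)^2\cdot\frac{d}{N}\cdot\frac{q}{R}
\ll \frac{MRN}{q}\sum_{d<D_0}\frac1d\ll \frac{MNR}{q}\log R .
\]
Using $MN\asymp X=qR$, this is $\ll R^{2}\log R$.

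\textbf{Step 4: comparison with the target.} It remains to check that $R^{2}\log R\ll R^{4-2\eta}/M$, which is equivalent to $M\ll R^{2-2\eta}/\log R$. Condition \eqref{con_range1} gives $M\ll R^{2-\delta}$ (as noted at the start of this section), and $2\eta<\delta/10<\delta$, so the inequality holds with room to spare.

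\textbf{Expected obstacle.} The only delicate point is Step 2: one must verify that the decay of both $\widehat{\phi}$ factors genuinely localises $t$ to $|t|\ll q/R$ uniformly in $n_1,n_2$, which uses $n_1/n_2\asymp 1$. Alternatively, the Parseval computation shows $\widehat{g}(0;n_1,n_2,0)=0$ outright, since $\phi(s)\phi(-sn_2/n_1)$ vanishes identically. That would make $S_5''(\boldsymbol{\beta})$ essentially $O(R^{-2026})$ and sidestep the issue entirely.
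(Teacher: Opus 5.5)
Your Steps 1--4 are correct and are essentially the paper's proof. You discard the $k\neq 0$ terms using $M\gg R^{\delta}$, bound $\widehat{g}(0;n_1,n_2,0)\ll q/R$, sum trivially to get $\ll R^{2}\log D_0$, and conclude from $M\ll R^{2-\delta}$ and $2\eta<\delta$. The paper quotes \cite[Lemma 5.7]{Irv2014} for the bound on $\widehat{g}(0;n_1,n_2,0)$. Your direct derivation is fine, but it is not delicate and needs neither localisation nor $n_1\asymp n_2$: $|\widehat{g}(0;n_1,n_2,0)|\le \|\widehat{\phi}\|_\infty\int|\widehat{\phi}(tR/q)|\,dt\ll q/R$.

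Your aside that $\widehat{g}(0;n_1,n_2,0)=0$ is wrong, because of a sign error in the Parseval step. Since $\phi$ is real, $\widehat{\phi}(-y)=\overline{\widehat{\phi}(y)}$, so
\[
\widehat{g}(0;n_1,n_2,0)=\int_{-\infty}^{\infty}\widehat{\phi}\Big(\frac{tR}{q}\Big)\overline{\widehat{\phi}\Big(\frac{tn_1R}{n_2q}\Big)}\,dt=\frac{q}{R}\cdot\frac{n_2}{n_1}\int_{-\infty}^{\infty}\phi(s)\phi\Big(\frac{sn_2}{n_1}\Big)\,ds.
\]
The argument is $+sn_2/n_1$, not $-sn_2/n_1$. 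Both factors are supported on positive $s$. Since $\phi\ge 0$, $\phi=1$ on $[1/3,2/3]$ and $n_2/n_1\in(1/2,2)$, the integral is strictly positive, and in fact $\widehat{g}(0;n_1,n_2,0)\asymp q/R$ uniformly.

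Consequently $S_5''(\boldsymbol{\beta})$ is not $O(R^{-2026})$; for $\beta\equiv 1$ every term is non-negative. The crude bound in Step 2 is sharp, and the lemma holds only because $R^{2}\log D_0$ is small compared with $R^{4-2\eta}/M$. Delete that remark and the ``Expected obstacle'' alternative; the main argument stands.
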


\begin{proof}
Our goal is to show that
\[S_5''(\boldsymbol{\beta}):=  \frac{3MR^2}{q^2}\sum_{d< D_0}\sum_{\st{n_1,n_2\sim N/d\\(n_1, n_2)=1}}\frac{1_{S_q(Y)}(dn_1)\beta(dn_2)}{n_2}\widehat{g}\lr{0;n_1, n_2, 0}\sum_{\st{k\in \mathbb{Z}}}\widehat{\phi}\lr{3Mk}\ll_\delta \dfrac{R^{4-2\eta}}{M}.\]
We first note that the condition \eqref{con_range1} and the fact that $X=qR$ imply 
    \begin{align*}
        M\gg \dfrac{X}{N}\gg \dfrac{qR}{q/R^{1-\delta}}\gg R^\delta.
    \end{align*}
Therefore, if $k\neq 0$, since $\widehat{\phi}(x)\ll_A\min \{1, |x|^{-A}\}$ for any positive integer $A$, we can bound the contributions of the terms with $k\neq0$ by $\ll R^{-2026}$. This implies that
\begin{align*}
  S _5''(\boldsymbol{\beta})= \frac{3\widehat{\phi}(0)MR^2}{q^2}\sum_{d< D_0}\sum_{\st{n_1,n_2\sim N/d\\(n_1, n_2)=1}}\frac{1_{S_q(Y)}(dn_1)\beta(dn_2)}{n_2}\widehat{g}\lr{0;n_1, n_2, 0} + O(R^{-2026}).
\end{align*}
By \cite[Lemma 5.7]{Irv2014}, we have $\widehat{g}(0; n_1, n_2, 0)\ll q/R$ for all $n_1, n_2\sim N/d$. So, we can bound the above sum trivially as
\begin{align*}
    S _5''(\boldsymbol{\beta}) \ll \dfrac{MR^2}{q^2}\sum_{d<D_0}\dfrac{N}{d}\cdot \dfrac{q}{R} + R^{-2026}\ll \dfrac{MNR\log D_0}{q} + R^{-2026}\ll R^2\log D_0,
\end{align*}
using the fact that $MN\asymp X=qR$. Since the condition \eqref{con_range1} implies that $M\ll R^{2-\delta}$, we can conclude that for any $0<\eta<\delta/20$,
\begin{align*}
     S _5''(\boldsymbol{\beta})\ll_\delta \dfrac{R^{4-2\eta}}{M},
\end{align*}
as desired.
\end{proof}

Now we are ready to complete the evaluation of the sum $S_5(\boldsymbol{\beta})$.

\begin{lem}\label{lem_for_S5}
    Let $S_5(\boldsymbol{\beta})$ be as in \eqref{def_S5_new}. Suppose that the condition \eqref{con_range1} holds. Then, for any $0<\eta<\delta/20$, we have
    \begin{align*}
        S_5(\boldsymbol{\beta}) \ll_{\delta}\frac{R^{4-2\eta}}{M}.
    \end{align*}
\end{lem}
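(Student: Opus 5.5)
This follows directly from the decomposition \eqref{def2_S5}, namely $S_5(\boldsymbol{\beta})=S_5'(\boldsymbol{\beta})-S_5''(\boldsymbol{\beta})$. We will combine the two preceding lemmas, and no new analytic input is needed. The only thing to check is that the decomposition is legitimate: adding back and subtracting the $c=0$ terms must produce exactly the sums \eqref{def_S5'_new} and \eqref{def_5''_new}.

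\textbf{Step 1: the $c=0$ terms.} From \eqref{def_c}, the condition $c=0$ means $vq=a''u$. Since $(a'',q)=1$, this forces $u=qk$ and $v=a''k$ for a unique $k\in\mathbb{Z}$. Conversely, every $k$ gives such a pair $(u,v)$. Substituting $u=qk$ gives $\widehat{\phi}(3Mu/q)=\widehat{\phi}(3Mk)$, and the factor $\widehat{g}(0;n_1,n_2,c)$ becomes $\widehat{g}(0;n_1,n_2,0)$, which no longer depends on $k$. So the $c=0$ part of the full $(u,v)$-sum is exactly the summand of $S_5''(\boldsymbol{\beta})$. The full sum over all $(u,v)$ is the summand of $S_5'(\boldsymbol{\beta})$. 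This justifies \eqref{def2_S5}.

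\textbf{Step 2: absolute convergence.} Before splitting, we must check that the full double sum over $(u,v)$ converges absolutely, so that the rearrangement is valid. The factor $\widehat{\phi}(3Mu/q)$ decays rapidly in $u$. For fixed $u$, the function $\widehat g(0;\cdot)$ is an integral of $\widehat\phi(tR/q)\widehat\phi((c-tn_1)R/(n_2q))$, and this decays rapidly in $c$, hence in $v$. So the sum is absolutely convergent.

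\textbf{Step 3: conclusion.} Lemma \ref{lem_S5'_new} gives $S_5'(\boldsymbol{\beta})=0$, and Lemma \ref{lem_S5''_new} gives $S_5''(\boldsymbol{\beta})\ll_\delta R^{4-2\eta}/M$ for $0<\eta<\delta/20$. By the triangle inequality, $S_5(\boldsymbol{\beta})\ll_\delta R^{4-2\eta}/M$.

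The only mildly delicate point is the absolute-convergence check in Step 2; everything else is immediate.
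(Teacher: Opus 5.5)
Your proposal is correct and follows the paper's proof: the paper likewise just combines the decomposition \eqref{def2_S5} with Lemma~\ref{lem_S5'_new}, which gives $S_5'(\boldsymbol{\beta})=0$, and Lemma~\ref{lem_S5''_new}. Your Steps 1 and 2 make explicit what the paper leaves implicit, namely identifying the $c=0$ terms via $u=qk$, $v=a''k$ and checking absolute convergence; for the latter, note that the bound on the $v$-sum is uniform in $u$, which is what lets the decay of $\widehat{\phi}(3Mu/q)$ finish the argument.
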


\begin{proof}
    The proof follows immediately from the relation \eqref{def2_S5} by invoking Lemma~\ref{lem_S5'_new} and Lemma~\ref{lem_S5''_new}.
\end{proof}

\subsection{Analysis of the sum $S_6(\boldsymbol{\beta})$} 

Recall from \eqref{def_S6_new}, $S_6(\boldsymbol{\beta})$ is given by 
\begin{align}\notag 
    S_6(\boldsymbol{\beta})=\frac{3MR^2}{q^2}\sum_{d< D_0}\sum_{\st{n_1,n_2\sim N/d\\(n_1, n_2)=1}}\frac{1_{S_q(Y)}(dn_1)\beta(dn_2)}{n_2}\hspace*{-0.3cm}\sum_{\st{u,v, w \in \mathbb{Z}\\ c, w\neq 0}}\hspace*{-0.3cm}\widehat{\phi}\lr{\frac{3Mu}{q}}\widehat{g}\lr{w/n_2;n_1, n_2, c}e\lr{\frac{c\overline{n_1} w}{n_2}}.
\end{align}
We will bound $S_6(\boldsymbol{\beta})$ by dividing it into two parts. In the first part, using the decay of the function $\phi$, we prove that we can restrict the ranges of the variables $u$, $v$, and $w$. Then, in the second part, using those restrictions together with the Kloosterman sums bounds, we prove that $S_6(\boldsymbol{\beta})$ contributes to the error term. More precisely, for $0<\eta<\delta/20$, let us define the set $\mathcal{A}$ given by
\begin{align}\label{def_A}
     \mc{A}:=\bigg\{(u, v, w)\in\mb{Z}^3: |u|<qR^{\eta}/M, |v|< N/R^{1-4\eta}, |w|< 8NR/q\bigg\}.
 \end{align}
Next, we write
\begin{align}\label{def2_S6}
    S_6(\boldsymbol{\beta})=S_7(\boldsymbol{\beta})+S_8(\boldsymbol{\beta}),
\end{align}
where
\begin{align}\label{def_S7_new}
    S_7(\boldsymbol{\beta})=\frac{3MR^2}{q^2}\sum_{d< D_0}\sum_{\st{n_1,n_2\sim N/d\\(n_1, n_2)=1}}\hspace*{-0.4cm}\frac{1_{S_q(Y)}(dn_1)\beta(dn_2)}{n_2}\hspace*{-0.4cm}\sum_{\st{(u,v,w)\in\mb{Z}^3\setminus\mc{A}\\w\neq 0, c\neq 0}}\widehat{\phi}\lr{\frac{3Mu}{q}}\widehat{g}\lr{w/n_2;n_1, n_2, c}e\lr{\frac{c\overline{n_1} w}{n_2}}
\end{align}
and 
\begin{align}\label{def_S8_new}
    S_8(\boldsymbol{\beta})=\frac{3MR^2}{q^2}\sum_{d< D_0}\sum_{\st{n_1,n_2\sim N/d\\(n_1, n_2)=1}}\hspace*{-0.4cm}\frac{1_{S_q(Y)}(dn_1)\beta(dn_2)}{n_2}\hspace*{-0.3cm}\sum_{\st{(u,v,w)\in\mc{A}\\w\neq 0, c\neq 0}}\widehat{\phi}\lr{\frac{3Mu}{q}}\widehat{g}\lr{w/n_2;n_1, n_2, c}e\lr{\frac{c\overline{n_1} w}{n_2}}.
\end{align}

We first show that the contributions from the sum $S_7(\boldsymbol{\beta})$ are negligible.

\begin{lem}\label{lem_for_S7}
    Let $S_7(\boldsymbol{\beta})$ be as in \eqref{def_S7_new}. Suppose that the condition \eqref{con_range1} holds. Then, we have 
    \begin{align*}
        S_7(\boldsymbol{\beta})\ll R^{-2026}.
    \end{align*}
\end{lem}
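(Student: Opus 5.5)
The plan is to show that each of the three conditions defining $\mathcal{A}$ can only fail where one of the Fourier weights is super-polynomially small. Since every variable ranges over a set of size polynomial in $R$, the total contribution will then be $\ll R^{-2026}$. We bound $S_7(\boldsymbol{\beta})$ trivially:
\begin{align*}
|S_7(\boldsymbol{\beta})|\le \frac{3MR^2}{q^2}\sum_{d<D_0}\sum_{n_1,n_2\sim N/d}\frac{1}{n_2}\sum_{(u,v,w)\notin\mathcal{A}}\Big|\widehat{\phi}\Big(\frac{3Mu}{q}\Big)\Big|\,\Big|\widehat{g}\Big(\frac{w}{n_2};n_1,n_2,c\Big)\Big|.
\end{align*}
We then use $\widehat{\phi}(x)\ll_A\min\{1,|x|^{-A}\}$, together with bounds for $\widehat{g}$, to treat the three ways of leaving $\mathcal{A}$. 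A triple lies outside $\mathcal{A}$ when $|u|\ge qR^{\eta}/M$, or $|v|\ge N/R^{1-4\eta}$, or $|w|\ge 8NR/q$. Note that $q,M,N$ are all powers of $R$, since $q=R^{(1+\theta)/(1-\theta)}$.

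\emph{Case $|u|\ge qR^\eta/M$.} Here $|\widehat{\phi}(3Mu/q)|\ll_A (R^{\eta}|u|M/(qR^\eta))^{-A}$, which gives decay $R^{-\eta A}$ and also summability in $u$. For the sums over $v,w$ we need bounds on $\widehat{g}$ that are uniform enough in $c=vq-a''u$ to be summed. Integrating by parts in $\widehat{g}(\xi)=\int g(t)e(-\xi t)\,dt$, with $g(t)=\widehat{\phi}(tR/q)\,\widehat{\phi}((c-tn_1)R/(n_2q))$, will be the key tool. The factor $\widehat{\phi}(tR/q)$ localizes $|t|\lesssim qR^{\eta'}/R$. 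On that range, the second factor is tiny unless $|c|\lesssim |t|n_1+n_2q/R^{1-\eta'}\ll NqR^{\eta'}/R$. Since $|a''u|$ is then controlled, this bounds $|v|$. Derivatives of $g$ cost factors $R/q+n_1R/(n_2q)\ll R/q$, so $\widehat{g}(\xi)\ll_A (q/R)(1+|\xi| q/R)^{-A}$ up to the localization tails. This yields the sum over $w$ as $\ll n_2$ times a constant, and the sum over $v$ as polynomially bounded. Hence, with $A$ large in terms of $\eta$, this case is $\ll R^{-2026}$.

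\emph{Case $|u|<qR^\eta/M$ but $|v|\ge N/R^{1-4\eta}$.} Here $|a''u|\le q\cdot qR^\eta/M$, and $q/M\ll N/R$ because $MN\asymp qR$. So $|c|\ge |v|q-|a''u|\gg |v|q$ once $|v|\ge NR^{4\eta-1}$; I expect the margin $R^{3\eta}$ to absorb this. Then for every $t$ at least one of $|tR/q|$ and $|(c-tn_1)R/(n_2q)|$ is $\gg |c|R/(Nq)\ge |v|R/N\cdot R^{0}\gtrsim R^{4\eta}\cdot |v|/(NR^{4\eta-1})$. Hence $g$, and therefore $\widehat{g}$ via its $L^1$-norm and derivatives, is $\ll_A (R^{3\eta}|v|/v_0)^{-A}$ where $v_0=N/R^{1-4\eta}$, which decays and is summable in $v$.

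\emph{Case $|w|\ge 8NR/q$, with $u,v$ inside their ranges.} Repeated integration by parts gives $\widehat{g}(w/n_2)\ll_A (q/R)\big(\|g'\|\cdot n_2/|w|\big)^{A}$. With $n_2\le 2N$ and derivative scale $\ll R/q$, each step gains $\ll 2NR/(q|w|)\le 1/4$. This by itself gives only a constant saving per step, so I would refine it using the localization to get an extra $R^{-\eta}$. Namely, $g$ is Schwartz-type at scale $q/R$ up to $R^{\eta}$ losses, so each integration by parts gains $(NR^{1+\eta}/(q|w|))$. This is why the $\eta$-room matters, and it is the main obstacle. The threshold $8NR/q$ carries no $R^\eta$ margin, so I will check carefully that the derivative bounds on $g$ are exactly $\ll (R/q)^j$ with absolute constants. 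I will also check that the factor $1/4$ per step, combined with sufficiently many steps and with splitting $|w|$ dyadically above the threshold, still yields $R^{-2026}$. If it does not, I would fall back on decay in $t$ via the support of $\widehat{\phi}$ after an additional Poisson step, as in \cite[Lemma 5.7]{Irv2014}.
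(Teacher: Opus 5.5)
Your handling of the $u$- and $v$-cases matches the paper. You bound $|\widehat g(w/n_2;n_1,n_2,c)|\le\int|g(t)|\,dt$ and use the rapid decay of $\widehat\phi(3Mu/q)$, $\widehat\phi(tR/q)$ and $\widehat\phi((c-tn_1)R/(n_2q))$, noting that $|a''u|R/(n_2q)\ll dR^{\eta}$ is dominated by $|v|R/n_2\gg dR^{4\eta}$. Your attention to summability in $u,v$ is, if anything, more careful than the paper's.

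The gap is the case $|w|\ge 8NR/q$, which you flag as the main obstacle but do not settle. Integration by parts with $A$-dependent implied constants gives only a bounded factor per step at this threshold. The refinement you propose runs the wrong way: a factor $NR^{1+\eta}/(q|w|)$ per step is about $R^{\eta}/8>1$ when $|w|$ is near $8NR/q$, so it is a loss, not a gain. As written, this case is not proved.

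The missing idea is that $\widehat g$ is \emph{compactly supported}, so this case contributes exactly $0$. This is why the threshold $8NR/q$ carries no $R^{\eta}$ margin. The relevant support is that of $\phi$, not of $\widehat\phi$. The Fourier transform of $t\mapsto\widehat\phi(tR/q)$ is $\frac qR\phi(-\xi q/R)$, supported in $|\xi|\le\frac{3R}{4q}$. The Fourier transform of $t\mapsto\widehat\phi((c-tn_1)R/(n_2q))$ is $\frac{n_2q}{n_1R}e(-\xi c/n_1)\phi(\xi n_2q/(n_1R))$, supported in $|\xi|\le\frac{3n_1R}{4n_2q}$. Hence $\widehat g$, the convolution of these two, vanishes for $|\xi|>\frac34(1+\frac{n_1}{n_2})\frac Rq$. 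Since $n_1/n_2<2$ while $|w/n_2|\ge 4dR/q\ge 4R/q$, one gets $\widehat g(w/n_2;n_1,n_2,c)=0$. This is the content of \cite[Lemma 5.7]{Irv2014} that the paper invokes.

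Your integration-by-parts route can be rescued only by tracking explicit geometric constants, for example $\|g^{(A)}\|_{L^1}\ll A^2\frac qR\bigl(\frac{3\pi}{2}(1+\frac{n_1}{n_2})\frac Rq\bigr)^A$. This gives $|\widehat g(\xi)|\ll A^2\frac qR(9/16)^A$ for $|\xi|\ge 4R/q$, hence vanishing as $A\to\infty$ --- the same compact-support fact in disguise.
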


\begin{proof}
    First note that if $(u, v, w)\in \mathbb{Z}^3\setminus\mathcal{A}$, then $u$, $v$, $w$ must satisfy one of the following:
    \begin{align*}
        |u|\geq qR^{\eta}/M, \quad |v|\geq N/R^{1-4\eta}, \quad \text{or} \quad |w|\geq 8NR/q.
    \end{align*}
    If $|w|\geq 8NR/q$, then if $n_2\sim N/d$, we have
    \begin{align*}
        \bigg|\dfrac{w}{n_2}\bigg|\geq \dfrac{4Rd}{q}\geq \dfrac{4R}{q}.
    \end{align*}
    So, we can apply \cite[Lemma 5.7]{Irv2014} to deduce that $\widehat{g}(w/n_2; n_1, n_2, c)=0$. Therefore, $S_7(\boldsymbol{\beta})=0$ in this case.

Without loss of generality, we can now assume that $|w|<8NR/q$ and consider the following set $\mathcal{A}_1$ given by
\begin{align*}
    \mathcal{A}_1=\{(u, v)\in \mathbb{Z}^2\colon |u|\geq qR^\eta/M\: \text{or}\: |v|\geq N/R^{1-4\eta}\}.
\end{align*}
It is enough to show that for any positive integer $A$,
\begin{align*}
    \sum_{(u, v)\in \mathcal{A}_1}\bigg|\widehat{\phi}\bigg(\dfrac{3Mu}{q}\bigg)\widehat{g}(w/n_2; n_1, n_2, c)\bigg|\ll_A \dfrac{1}{R^{A}}.
\end{align*}
By definition of $g$ from \eqref{def_g} and that of $c$ from \eqref{def_c}, we see that the left-hand side of the above expression is
\begin{align*}
    \ll  \int_{-\infty}^\infty\sum_{\st{(u,v)\in\mc{A}_1}}\bigg|\widehat{\phi}\lr{\frac{3Mu}{q}} \widehat{\phi}\lr{\frac{tR}{q}}\widehat{\phi}\lr{\frac{(vq-a''u-tn_1)R}{n_2q}}\bigg|dt.
\end{align*}
The key idea now is to use the fact that $|\widehat{\phi}(x)|\ll_A\min\{1, |x|^{-A}\}$ for any positive integer $A$. 

Observe that since $(u, v)\in \mathcal{A}_1$, let us first assume that $|u|\geq qR^\eta/M$. Then, this implies that $3M|u|/q\geq 3R^\eta$. So, the function $\widehat{\phi}(3Mu/q)$ makes the above sum bounded by $\ll R^{-A}$ for any large $A>0$.

Similarly, the presence of the function $\widehat{\phi}(tR/q)$ makes the above sum small for $|t|\geq qR^{-1+\eta}$. So, we may assume that
\begin{align*}
    |u|\leq \dfrac{qR^\eta}{M}, \quad |t|\leq \dfrac{q}{R^{1-\eta}}, \quad \text{and} \quad |v|\geq \dfrac{N}{R^{1-4\eta}}.
\end{align*}
In this case, we have
\begin{align*}
     \frac{(a''u+tn_1)R}{n_2q}\ll \bigg(\dfrac{qD_0}{N}\cdot \dfrac{qR^\eta}{M} + \dfrac{q}{R^{1-\eta}}\bigg)\dfrac{R}{q}\ll R^{3\eta},
\end{align*}
using the facts that $a''\leq q$, $d\leq D_0=R^{2\eta}$, $MN\asymp X=qR$. Therefore, we can deduce that
\begin{align*}
    \dfrac{(vq-a''u-tn_1)R}{n_2q}\gg R^{\eta}.
\end{align*}
Therefore, we can conclude that in this case, as the presence of the function $\widehat{\phi}\big((vqR-a''uR-tn_1R)/(n_2q)\big)$ makes our sum arbitrarily small. So, we can conclude that for any $A>0$,
\begin{align*}
     \sum_{(u, v)\in \mathcal{A}_1}\bigg|\widehat{\phi}\bigg(\dfrac{3Mu}{q}\bigg)\widehat{g}(w/n_2; n_1, n_2, c)\bigg|\ll_A \dfrac{1}{R^{A}}.
\end{align*}
This completes the proof.
\end{proof}

Now it remains to estimate the sum $S_8(\boldsymbol{\beta})$ given by \eqref{def_S8_new}. Interchanging the order of summations and bounding $|\beta(n)|\leq 1$ trivially, we see that
\begin{align}\label{S8_ineq}
    S_8(\boldsymbol{\beta})\ll \dfrac{MR^2}{q^2N}\sum_{1\leq d<D_0}d\sum_{\substack{(u, v, w)\in \mathcal{A}\\ w, \: c\neq 0}}\sum_{n_2\sim N/d}\bigg|\sum_{\substack{n_1\sim N/d\\ (n_1, n_2)=1}}1_{S_q(Y)}(n_1)\widehat{g}(w/n_2; n_1, n_2, c)e\bigg(\dfrac{c\overline{n_1}w}{n_2}\bigg)\bigg|.
\end{align}
Next, we wish to remove the factor $\widehat{g}$ from the above sum by using partial summation. Indeed, we write $f(x)=\widehat{g}(w/n_2; x, n_2, c)$. Then, by \eqref{def_g}, we have
\begin{align*}
f(x)=\int_{-\infty}^\infty \widehat{\phi}\bigg(\dfrac{tR}{q}\bigg)\widehat{\phi}\bigg(\dfrac{(c-tx)R}{n_2q}\bigg)e\bigg(\dfrac{-tw}{n_2}\bigg)\: dt.
\end{align*}
By \cite[Lemma 5.7]{Irv2014}, we have $f(x)\ll q/R$ for all $x\in [N/d, 2N/d]$. Also, note that since $\phi$ is a smooth function, we can conclude that $f$ is also a smooth function.  Then, differentiating with respect to $x$, we obtain
\begin{align*}
    \dfrac{df(x)}{dx}\ll \dfrac{R}{n_2q}\int_{-\infty}^{\infty}\bigg|\widehat{\phi}\bigg(\dfrac{tR}{q}\bigg)\widehat{\phi}^\prime\bigg(\dfrac{(c-tx)R}{n_2q}\bigg)|t|\: dt.
\end{align*}
Since $\widehat{\phi}(x)\ll_A \min\{1, |x|^{-A}\}$ for any positive integer, we can infer that the right-hand side of the above expression is
\begin{align*}
    \ll_\eta \dfrac{R}{n_2q}\int_{-qR^{-1+\eta/2}}^{qR^{-1+\eta/2}}|t|dt + R^{-2026}\ll_{\eta} \dfrac{qdR^{\eta}}{NR},
\end{align*}
using the fact that $n_2\sim N/d$. We may now apply the above bound together with the partial summation to deduce from \eqref{S8_ineq} that
\begin{equation}
\begin{aligned}
    S_8(\boldsymbol{\beta}) &\ll_{\eta} \dfrac{MR^{1+\eta}}{qN}\sum_{1\leq d<D_0}d\sum_{\substack{(u, v, w)\in \mathcal{A}\\w, \: c\neq 0}}\sum_{n_2\sim N/d}\max_{N'\sim N/d}\bigg|\sum_{\substack{N/d\leq n_1<N'\\ (n_1, n_2)=1}}1_{S_q(Y)}(n_1)e\bigg(\dfrac{c\overline{n_1}w}{n_2}\bigg)\bigg|\\
    &\ll_{\eta}  \dfrac{MR^{1+\eta}}{qN}(\log N) \sum_{1\leq d<D_0}d\sum_{\substack{(u, v, w)\in \mathcal{A}\\w, \: c\neq 0}}S_9,
\end{aligned}
\label{eq_s8_s9}
\end{equation}
where
\begin{align}\label{def_S9_new}
    S_9:=\max_{N'\sim N/d}\max_{N/d\leq N_1<N'/2}\sum_{n_2\sim N/d}\bigg|\sum_{\st{n_1\sim N_1\\(n_1,n_2)=1}}1_{S_q(Y)}(n_1)e\lr{\frac{c\overline{n_1} w}{n_2}}\bigg|.
\end{align}
Note that trivially, we have $S_9\ll Nd^{-1}\Psi(N/d, Y)$. Therefore, our task reduces to obtaining sharp estimates for the sum $S_9$ above to complete the estimates for $S_8(\boldsymbol{\beta})$. Before that, we make the following remark on the size of $cw$, which will be crucial in our analysis later on.

\begin{re}\label{re_sizeof_cw}
Recall from \eqref{def_c}, we have $c=vq-a''u$. Then, if $(u, v, w)\in \mathcal{A}$, we deduce that  \begin{align*}
    |cw|\leq \dfrac{N}{R^{1-4\eta}}\cdot q \cdot \dfrac{8NR}{q} + q\cdot \dfrac{qR^\eta}{M}\cdot \dfrac{8NR}{q}\ll N^2R^{4\eta},
\end{align*}
using the facts that $1\leq a''\leq q$ and $MN\asymp X=qR$.
\end{re}

\subsection{Evaluation of the sum $S_8(\boldsymbol{\beta})$} 

Our goal is to bound the sum $S_8(\boldsymbol{\beta})$ for $Y=(\log X)^{C(\delta)}$. In particular, we will apply Theorem \ref{thm_kloos} to estimate the sum $S_9$.

\begin{lem}\label{lem_S9_new}
    Let $S_9$ be as in \eqref{def_S9_new}. Let $0<\tau<1$ be such that $Y\leq N^{1-\tau}<N/d$. Then, for any $\eta>0$, we have
    \begin{align*}
        S_9\ll_\eta X^{7\eta}\bigg(\dfrac{N^{2-\tau/2}Y^{1/2}}{d^{3/2}} + \dfrac{N^{7/4+\tau/4}}{d^2} + \dfrac{N^{2-\tau}}{d}\bigg).
    \end{align*}
\end{lem}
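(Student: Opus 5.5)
The plan is to apply Theorem \ref{thm_kloos} directly to $S_9$, with the roles of its variables matched as follows: the outer modulus $m$ of the theorem is our $n_2\sim N/d$, so we take $M=N/d$; the inner smooth variable $n$ is our $n_1$; and the frequency $a$ is $cw$, which is non-zero since $c,w\neq 0$.

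\textbf{Step 1: remove the lower limit on $n_1$.} The inner sum runs over $n_1\sim N_1$, i.e.\ $N_1<n_1\le 2N_1$. We write it as the difference of two sums over $n_1<x$, with $x\in\{N_1,2N_1\}$, up to $O(1)$ endpoint terms. Since $N_1<N'/2\le N/d$, we have $x\asymp N/d$.

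\textbf{Step 2: match the coprimality conditions.} The condition $1_{S_q(Y)}(n_1)$ gives $P^+(n_1)\le Y$ and $(n_1,q)=1$. Together with $(n_1,n_2)=1$ this is exactly the condition $(n,mq)=1$ in $\mathrm{Kl}_y(M,x;a,q)$ with $y=Y$.

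\textbf{Step 3: apply Theorem \ref{thm_kloos}.} We take $z=N^{1-\tau}$. The hypothesis $Y\le N^{1-\tau}<N/d$ gives $y\le z<x$, up to harmless constant factors in $x$, which we absorb or handle by the trivial bound.

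\textbf{Step 4: evaluate the factors.} By Remark \ref{re_sizeof_cw}, $|a|=|cw|\ll N^2R^{4\eta}$. Hence
\[
\frac{|a|}{xM}\ll \frac{N^2R^{4\eta}}{(N/d)^2}=d^2R^{4\eta},
\]
so $\bigl(1+|a|/(xM)\bigr)^{1/2}\ll dR^{2\eta}$. Also $(|a|xM)^\eta\ll X^{4\eta}$, say, because $N,R\le X$.

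\textbf{Step 5: compute the three terms.} Substituting $M\asymp x\asymp N/d$ and $z=N^{1-\tau}$ into the main bracket gives:
\[
Mx^{1/2}Y^{1/2}z^{1/2}\asymp \frac{N^{3/2}}{d^{3/2}}\,Y^{1/2}N^{(1-\tau)/2}=\frac{N^{2-\tau/2}Y^{1/2}}{d^{3/2}},
\]
\[
x^{3/2}M^{1/2}z^{-1/4}\asymp \frac{N^2}{d^2}\,N^{-(1-\tau)/4}=\frac{N^{7/4+\tau/4}}{d^2},
\]
\[
Mz\asymp \frac{N^{2-\tau}}{d}.
\]

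\textbf{Step 6: reconcile the power of $d$.} The multiplier $(1+|a|/(xM))^{1/2}\ll dR^{2\eta}$ would cost an extra factor $d$ in the first two terms. Since $d<D_0=R^{2\eta}$, this factor $d$ can simply be absorbed into $X^{O(\eta)}$. Alternatively, the displayed powers of $d$ can be kept by using $|a|/(xM)\ll d^2R^{4\eta}\le R^{8\eta}$ and bounding the multiplier by $R^{4\eta}$ outright. Either way, collecting the $X^{O(\eta)}$ factors and readjusting $\eta$ gives the stated $X^{7\eta}$.

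\textbf{Step 7: uniformity in the maxima.} The bounds above do not depend on $N'$ or $N_1$, so the maxima over $N'$ and $N_1$ in $S_9$ are harmless.

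The main thing to get right is Step 6, i.e.\ checking that the factor $(1+|a|/(xM))^{1/2}$ stays $X^{O(\eta)}$. This holds because $|cw|\ll N^2R^{4\eta}$ from Remark \ref{re_sizeof_cw} and $d<R^{2\eta}$. The rest is substitution.
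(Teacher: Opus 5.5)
Your proposal is correct and is essentially the paper's proof: apply Theorem \ref{thm_kloos} with $m=n_2$, $M\asymp x\asymp N/d$ and $a=cw\neq 0$, use Remark \ref{re_sizeof_cw} to get $(1+|a|/(xM))^{1/2}\ll dR^{2\eta}$, and take $z=N^{1-\tau}$. Your Step 6 spells out what the paper leaves implicit in its $X^{7\eta}$: the extra factor $d$ is absorbed into $X^{O(\eta)}$ using $d<D_0=R^{2\eta}$.
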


\begin{proof} 
 We apply Theorem \ref{thm_kloos} with $a=cw$ together with Remark \ref{re_sizeof_cw} to obtain
    \begin{align*}
        S_9\ll_\eta X^{7\eta}\bigg\{\bigg(\dfrac{N}{d}\bigg)^{3/2}Y^{1/2}z^{1/2} + \bigg(\dfrac{N}{d}\bigg)^2\dfrac{1}{z^{1/4}} + \dfrac{Nz}{d}\bigg\},
    \end{align*}
   where $z$ is any real number satisfying $Y\leq z<N/d$. Choosing $z=N^{1-\tau}$ for some $0<\tau<1$ such that $Y\leq z<N/d$, we deduce that
    \begin{align*}
        S_9\ll_\eta X^{7\eta}\bigg(\dfrac{N^{2-\tau/2}Y^{1/2}}{d^{3/2}} + \dfrac{N^{7/4+\tau/4}}{d^2} + \dfrac{N^{2-\tau}}{d}\bigg),
    \end{align*}
    as desired.
    \end{proof}

    We are now ready to estimate the sum $S_8(\boldsymbol{\beta})$.

    \begin{lem}\label{lem_for_S8}
    Let $S_8(\boldsymbol{\beta})$ be as in \eqref{def_S8_new}. Let $Y=( \log X)^{C(\delta)}$ for some large real number $C(\delta)>0$. Let $M, N\geq 2$ be such that the condition \eqref{con_range1} holds. Then, for any $0<\eta<\delta/20$, we have 
    \begin{align*}
        S_8(\boldsymbol{\beta})\ll_{\delta}  \frac{R^{4-2\eta}}{M}.
    \end{align*}
\end{lem}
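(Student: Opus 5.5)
We plug Lemma \ref{lem_S9_new} into \eqref{eq_s8_s9}, count the triples in $\mathcal{A}$, and then choose $\tau$ to balance the resulting terms. Note first that $\#\mathcal{A}\ll \frac{qR^{\eta}}{M}\cdot\frac{N}{R^{1-4\eta}}\cdot\frac{NR}{q}= \frac{N^2R^{5\eta}}{M}$. Since $MN\asymp qR$, this is $\asymp N^3R^{5\eta}/(qR)$. Substituting into \eqref{eq_s8_s9} and summing over $d<D_0=R^{2\eta}$ gives
\begin{align*}
S_8(\boldsymbol{\beta})\ll_\eta \frac{MR^{1+\eta}}{qN}\cdot\frac{N^2R^{5\eta}}{M}\, X^{O(\eta)}\Big(N^{2-\tau/2}Y^{1/2}+N^{7/4+\tau/4}+N^{2-\tau}\Big)
\ll \frac{X^{O(\eta)}NR}{q}\Big(N^{2-\tau/2}Y^{1/2}+N^{7/4+\tau/4}\Big).
\end{align*}
The factor $d$ in \eqref{eq_s8_s9} is absorbed by the negative powers of $d$ in Lemma \ref{lem_S9_new}, up to $D_0^{O(1)}=R^{O(\eta)}$. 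The target is $R^{4-2\eta}/M\asymp R^{3-2\eta}N/q$. It therefore suffices to show
\[
N^{2-\tau/2}Y^{1/2}+N^{7/4+\tau/4}\ll R^{2-c\eta}
\]
for a suitable absolute constant $c$, with all the $X^{O(\eta)}$ losses absorbed.

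To balance the two terms, take $\tau=1/3$. Both terms then become $N^{11/6}$, up to the harmless factor $Y^{1/2}=(\log X)^{C(\delta)/2}=X^{o(1)}$. The condition we need is $N^{11/6}\ll R^{2-c\eta}X^{-O(\eta)}$, i.e. $N\ll R^{12/11-O(\eta)}$. This holds because of the upper bound $N\le R^{12/11-\delta}$ in \eqref{con_range1}, together with $\eta<\delta/20$ and $X=R^{O(1)}$ (indeed $X=qR$ and $q=R^{(1+\theta)/(1-\theta)}$).

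We must also check the hypothesis $Y\le N^{1-\tau}<N/d$ of Lemma \ref{lem_S9_new}. It holds since $Y$ is a power of $\log X$, $N\ge q/R^{1-\delta}$ is a positive power of $R$, and $d<R^{2\eta}$ is much smaller than $N^{1/3}$. The term $N^{2-\tau}/d$ is dominated by the others.

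The main obstacle is the bookkeeping of the $\eta$-losses. The factors $X^{7\eta}$ from Lemma \ref{lem_S9_new} (ultimately coming from $|cw|\ll N^2R^{4\eta}$ via Remark \ref{re_sizeof_cw}), $R^{5\eta}$ from $\#\mathcal{A}$, $R^{\eta}$ from partial summation, and the $d$-sum must all fit inside the margin $R^{11\delta/6}$. Since these are all $R^{O(\eta)}$ with an absolute implied constant, we handle this by applying Lemma \ref{lem_S9_new} with a much smaller auxiliary $\eta'$ (say $\eta'=\eta/100$) in place of $\eta$, and by using $\eta<\delta/20$. Should the constants not close, we shrink the admissible range of $\eta$ relative to $\delta$, which is harmless for the application.
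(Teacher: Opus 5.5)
Your proposal is correct and follows the paper's proof essentially step for step: insert Lemma~\ref{lem_S9_new} into \eqref{eq_s8_s9}, use $\#\mathcal{A}\ll N^2R^{5\eta}/M$, sum over $d<D_0$, take $\tau=1/3$ so that both main terms become $N^{11/6}Y^{1/2}$, and finish with $N\le R^{12/11-\delta}$ and $Y=X^{o(1)}$.

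Your closing hedge is unnecessary, and you do not need an auxiliary $\eta'$. Even taking the factor $X^{7\eta}$ from Lemma~\ref{lem_S9_new} at face value, and using $X=qR\le R^{34/11}$ under \eqref{con_range1}, the total loss is at most $R^{\eta}\cdot R^{5\eta}\cdot D_0^{1/2}\cdot X^{7\eta}\le R^{29\eta}$. Since $29\eta+2\eta<11\delta/6$ for every $0<\eta<\delta/20$, the full stated range of $\eta$ is covered without shrinking it.
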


\begin{proof}
    By \eqref{eq_s8_s9}, we have
    \begin{align*}
        S_8(\boldsymbol{\beta})\ll_\eta \dfrac{MR^{1+\eta}\log N}{qN}\sum_{1\leq d<D_0}d \cdot \#\mathcal{A}\max_{\substack{(u, v, w)\in \mathcal{A}\\ w, \: c\neq 0}}S_9.
    \end{align*}
    where $\mathcal{A}$ is given by \eqref{def_A} and $S_9$ is given by \eqref{def_S9_new}. Note that we have
    \begin{align*}
        \#\mathcal{A}\ll \dfrac{qR^\eta}{M}\cdot \dfrac{N}{R^{1-4\eta}}\cdot \dfrac{NR}{q}\ll \dfrac{N^2R^{5\eta}}{M}.
    \end{align*}
 Next, observe that since $Y=(\log X)^{C(\delta)}$ and $q/R^{1-\delta}\leq N\leq R^{12/11-\delta}$ by \eqref{con_range1}, it is evident that for any $0<\tau<1$, we have $Y\leq N^{1-\tau}\leq N/d$. So, invoking Lemma \ref{lem_S9_new}, we see that
    \begin{align*}
        S_8(\boldsymbol{\beta})\ll_{\eta}\dfrac{MR^{1+2\eta}}{qN}\cdot \dfrac{N^2R^{5\eta}}{M}\sum_{1\leq d\leq D_0}d\bigg(\dfrac{N^{2-\tau/2}Y^{1/2}}{d^{3/2}} + \dfrac{N^{7/4+\tau/4}}{d^2} + \dfrac{N^{2-\tau}}{d}\bigg)X^{7\eta}.
    \end{align*}
    Recalling that $D_0=R^{2\eta}$, we have
    \begin{align*}
        S_8(\boldsymbol{\beta})\ll_\eta\dfrac{NR^{1+16\eta}}{q}(N^{2-\tau/2}Y^{1/2}+N^{7/4+\tau/4}).    
\end{align*}
Note that $(7+\tau)/4\leq 2-\tau/2$ if and only if $\tau\leq 1/3$. Choosing $\tau=1/3$, we infer that
\begin{align*}
    S_8(\boldsymbol{\beta})\ll_\eta \dfrac{MNR^{1+16\eta}}{qM}N^{11/6}Y^{1/2}\ll_\eta  \dfrac{R^{2+16\eta}}{M}N^{11/6}Y^{1/2},
\end{align*}
using the fact that $MN\asymp X=qR$. By \eqref{con_range1}, we have $N\ll R^{12/11-\delta}$. This implies that
\begin{align*}
    S_8(\boldsymbol{\beta})\ll_{\eta}\dfrac{R^{4-11\delta/6 + 16\eta}}{M}Y^{1/2}\ll_\delta \dfrac{R^{4-2\eta}}{M},
\end{align*}
using the fact that $Y=(\log X)^{C(\delta)}\ll X^{o(1)}$ and $0<\eta<\delta/20$. This completes the proof.
\end{proof}

\subsection{End step: Proof of Proposition \ref{Prop: combine S1 and S2}} We are now ready to complete the proof of Proposition \ref{Prop: combine S1 and S2}.

\begin{proof}[Proof of Proposition \ref{Prop: combine S1 and S2}]
Recall that if $M, N\geq 2$ satisfy the condition \eqref{con_range1}, we wish to show that
\begin{align*}
        S(\boldsymbol{\beta})=\dfrac{3\widehat{\phi}(0)^3K(N, Y)MNR^2}{q^2}\sum_{n\sim N}\beta(n) + O_\delta\bigg(\dfrac{R^{4-2\eta}}{M}\bigg),
    \end{align*}
    where $S(\boldsymbol{\beta})$ is given by \eqref{def_S_beta}.
    
    First, by \eqref{S_1'=S1+S2}, Lemma~\ref{lem_S2_new}, and \eqref{def2_S1}, we have 
\begin{align}\notag 
S(\boldsymbol{\beta})=S_3(\boldsymbol{\beta})+S_4(\boldsymbol{\beta})+O_{\delta}\lr{\frac{R^{4-2\eta}}{M}}.
\end{align}
Next, by Lemma~\ref{lem_S3_new}, we have
$$S_3(\boldsymbol{\beta})= \frac{3\widehat{\phi}(0)^3K(N,Y)MNR^2}{q^2}\sum_{n\sim N}\beta(n)+O_{\delta}\lr{\frac{R^{4-2\eta}}{M}}.$$
So, to complete the proof, it is enough to show that
\begin{align}\label{S1'_in_proof}
    S_4(\boldsymbol{\beta})\ll_\delta\dfrac{R^{4-2\eta}}{M}.
\end{align}
Indeed, by \eqref{S4=S5+S6}, we have $S_4(\boldsymbol{\beta})=S_5(\boldsymbol{\beta}) + S_6(\boldsymbol{\beta})$. Next, by  Lemma~\ref{lem_for_S5}, we have
\begin{align*}
    S_5(\boldsymbol{\beta})\ll_\delta \dfrac{R^{4-2\eta}}{M}.
\end{align*}
Finally, by \eqref{def2_S6}, Lemma~\ref{lem_for_S7}, and Lemma \ref{lem_for_S8}, we have
\begin{align*}
    S_6(\boldsymbol{\beta})=S_7(\boldsymbol{\beta}) + S_8(\boldsymbol{\beta})\ll_\delta\dfrac{R^{4-2\eta}}{M}.
\end{align*}
Combining the above estimates, the relation \eqref{S1'_in_proof} follows. This completes the proof.
\end{proof}

\section{Proof of Theorem~\ref{main_thm2}}\label{sec: Proof of main theorem 2}
Recall that we wish to estimate the sum
\begin{align*}
    \Sigma(q, R):=\sum_{\substack{X/4\leq n\leq 4X}}1_{S_q(Y)}(n)\sum_{\substack{r\in \mathbb{Z}\\ na\equiv r\Mod q}}\phi\bigg(\dfrac{r}{R}\bigg),
\end{align*}
where $X=q^{2/(1+\theta)}$, $R=q^{(1-\theta)/(1+\theta)}$ with $\theta$ given by \eqref{def-theta} and $Y=(\log X)^{C(\varepsilon)}$ for some large $C(\varepsilon)>0$. 

We will apply Propositions \ref{Prop: Main 1} and \ref{Prop: Main 2} to estimate the above sum $\Sigma(q, R)$. To do so, we need to verify if the hypotheses in these propositions are satisfied by the above values of $X, R$, and $\theta$. First note that if $\theta=6/17-\varepsilon$ for any small $\varepsilon>0$, we have
\begin{align*}
    \dfrac{1+\theta}{1-\theta}=\dfrac{23}{11}-2\bigg(\dfrac{17}{11}\bigg)^2\varepsilon + O(\varepsilon^2).
\end{align*}
This implies that $q/R^{1-\delta}\leq R^{12/11-\delta}$ holds for any $0<\delta<(17/11)^2\varepsilon$.

Let $\delta$ be a parameter such that $0<\delta<2\varepsilon$. Then, the hypotheses in Propositions  \ref{Prop: Main 1} and \ref{Prop: Main 2} hold, and in particular, the ranges of $M$ and $N$ are non-empty. Suppose that $M, N\geq 2$ are two real numbers that satisfy the condition \eqref{con_range1}.
Then, by positivity, we have
\begin{align*}
    \Sigma(q, R)\gg \sum_{m\sim M}1_{S_q(Y)}(m)\sum_{n\sim N}1_{S_q(Y)}(n)\Phi(mn, R).
\end{align*}
Applying Propositions \ref{Prop: Main 1} and \ref{Prop: Main 2}, for any small $0<\eta<\varepsilon/100$, we deduce that
\begin{align}\label{eq_proof_thm_main}
    \Sigma(q, R)\gg\dfrac{\widehat{\phi}(0)R}{q}\bigg(\sum_{m\sim M}1_{S_q(Y)}(m)\bigg)\bigg(\sum_{n\sim N}1_{S_q(Y)}(n)\bigg) + O_\varepsilon(R^{2-\eta}).
\end{align}
Note that by \eqref{con_range1}, we have the following crude bounds
\begin{align*}
    M\gg \dfrac{X}{N}\gg \dfrac{qR}{R^{2-\delta}}\gg R^\delta =X^{\delta(1-\theta)/2} \quad \text{and} \quad N\geq \dfrac{q}{R^{1-\delta}}\geq R^\delta = X^{\delta(1-\theta)/2}.
\end{align*}
In particular, we may apply Lemma \ref{lem_for_interval_small_y}. Indeed, we choose $C(\varepsilon)> 50(1-\theta)\varepsilon^{-1}+1$ so that $Y=(\log X)^{C(\varepsilon)}$. Then, by Lemma \ref{lem_for_interval_small_y}, we have
\begin{align}\notag 
    \bigg(\sum_{m\sim M}1_{S_q(Y)}(m)\bigg)& \bigg(\sum_{n\sim N}1_{S_q(Y)}(n)\bigg) \\
  \notag  &\gg \Psi(M, Y)\Psi(N, Y)\prod_{\substack{p|q\\p\leq Y}}\bigg(1-\dfrac{1}{p^{\alpha(M, Y)}}\bigg)\bigg(1-\dfrac{1}{p^{\alpha(N, Y)}}\bigg)\\
  \label{eq_proof_thm_main_i}  &\gg X\bigg(\dfrac{\log X}{\log Y}\bigg)^{-(1+o(1))\log X/\log Y}\prod_{\substack{p|q\\p\leq Y}}\bigg(1-\dfrac{1}{p^{\alpha(M, Y)}}\bigg)\bigg(1-\dfrac{1}{p^{\alpha(N, Y)}}\bigg),
\end{align}
by \eqref{Eq: smooth crude} and the fact that $MN\asymp X$. In order to simplify the above expression, note that since $Y=(\log X)^{C(\varepsilon)}$, we have
\begin{align}\notag 
    \bigg(\dfrac{\log X}{\log Y}\bigg)^{-(1+o(1))\log X/\log Y}
    & \gg \exp\bigg(-\big(1+o(1)\big)\dfrac{\log X}{C(\varepsilon)\log \log X}\log \bigg(\dfrac{\log X}{C(\varepsilon)\log \log X}\bigg)\bigg)\\
    \label{eq_proof_thm_main_ii}& \gg X^{-1/C(\varepsilon) +o(1)}.
\end{align}
Secondly, we observe that since $\alpha(M, Y), \alpha(N, Y)\in [1/2, 1)$ by \eqref{estimates_for_alpha_x_y}, we have for some explicit constant $c>0$,
\begin{align}\notag 
    \prod_{\substack{p|q\\p\leq Y}}\bigg(1-\dfrac{1}{p^{\alpha(M, Y)}}\bigg)\bigg(1-\dfrac{1}{p^{\alpha(N, Y)}}\bigg)
    &\gg \prod_{p|q}2^{-1}\cdot 2^{-1}= 4^{-\omega(q)}\\
   \label{eq_proof_thm_main_iii} &\gg 4^{-c(1+\theta)\log X/(2\log \log X)} =X^{o(1)}=R^{o(1)},
\end{align}
since $\omega(q)\ll \log q/\log \log q$, $X=q^{2/(1+\theta)}$, and $R=X^{(1-\theta)/2}$. 

Hence, combining \eqref{eq_proof_thm_main}, \eqref{eq_proof_thm_main_i}, \eqref{eq_proof_thm_main_ii}, and \eqref{eq_proof_thm_main_iii} together with the fact  $X=qR$, we may conclude that
\begin{align*}
    \Sigma(q, R)\gg_\varepsilon R^{2}(qR)^{-1/C(\varepsilon) + o(1)} + O_\varepsilon(R^{2-\eta}) \gg R^{2-\frac{1-\theta}{2C(\varepsilon)}+o(1)},
\end{align*}
by using the fact that $q=R^{(1+\theta)/(1-\theta)}$. This completes the proof.

    \bibliographystyle{plain}

\end{document}